\documentclass[11pt]{article}
\usepackage{a4wide}
\usepackage{amsmath, amsthm, xcolor}
\usepackage{amssymb}
\usepackage{graphicx,soul}
\usepackage[normalem]{ulem}
\usepackage{relsize}
\usepackage{enumerate}
\newcommand{\hidden}[1]{}
\usepackage[colorlinks, linkcolor=blue, anchorcolor=blue, citecolor=blue]{hyperref}
\usepackage{color}
\usepackage{float} %设置图片浮动位置的宏包
\usepackage{subfigure} %插入多图时用子图显示的宏包
\usepackage[mathscr]{euscript}
\usepackage{tikz}
\numberwithin{equation}{section}%公式按章节编号

\newtheorem{theorem}{Theorem}[section]
\newtheorem{proposition}[theorem]{Proposition}

\newtheorem{corollary}[theorem]{Corollary}
\newtheorem{lemma}[theorem]{Lemma}

\newtheorem{definition}[theorem]{Definition}
\theoremstyle{remark}
\newtheorem{remark}[theorem]{{Remark}}

\newcommand{\Lop}{\mathcal L}
\newcommand{\ind}{\mathbf 1}
\DeclareMathOperator{\Var}{Var}

\begin{document}

\title{Survivor Set of Gauss Map with a Hole at $1$}

%\title{Appendix to the Shrinking Target Problem for Matrix Transformations of Tori: revisted}

 \author{Cheng Liu \\ {\small\sc (Wuhan) } \and Lingmin Liao \\ {\small\sc (Wuhan) }}

\bigskip

\date{}

 \maketitle

\begin{abstract}
	Let $T$ be the Gauss map. For each $\alpha\in [0,1]$, let $E(\alpha)$ be the survivor set consisting of all $x\in [0,1]$ whose orbit $\{T^k(x):k\ge 0\}$ never enters the interval $(\alpha,1]$. We prove that the set-valued function $\alpha\mapsto E(\alpha)$ is increasing and locally constant almost everywhere. Moreover, we explicitly characterize all intervals where the function $\alpha\mapsto E(\alpha)$ remains constant. We also describe the points at which the map $\alpha\mapsto E(\alpha)$ is not locally constant. Further, we obtain the Hausdorff dimension of $E(\alpha)$ for all $\alpha\in [0,1]$. We prove that the function $\alpha\mapsto \dim_H(E(\alpha))$ forms a devil's staircase on $(0,1]$, but is not continuous at $0$. This discontinuity implies that the result of Raith [Acta Math. Univ. Comenian. 63 (1994), 39-53] fails for piecewise monotone interval maps with infinitely many branches. Finally, we give an application quantifying the set of points sharing identical $\alpha$-continued fraction expansion and regular continued fraction expansion.
\end{abstract}
$\textbf{Keywords}$: open dynamical system, survivor set, $\alpha$-continued fraction, graph directed Markov system, devil's staircase, transfer operator

\bigskip

\section{Introduction}
\subsection{Open dynamical system}
Given a dynamical system on a topological space and an open subset, it is natural to study the associated \textit{survivor set}, that is, the subset of all points which never enter this subset under forward iteration. In this framework, the theory of open dynamical systems is concerned with escape rates, conditionally invariant measures, and other closely related concepts; see for example \cite{PU17}.
\par
Let $f:[0,1]\rightarrow [0,1]$ be a map. Let $H$ be an open subset of $[0,1]$. Define the surviving set
\[E_f(H)=\left\{x\in [0,1]: f^k(x)\notin H,\ \forall k\ge 0\right\}.\]
Note that $E_f(H)$ varies based on the size and position of the hole $H$. Many studies have investigated the Hausdorff dimension $\dim_H(E_f(H))$ of the survivor set with varying $H$. In the 1980s', Urbański \cite{U86,U87} proved that for $f:x\mapsto 2x\ ({\rm mod}\ 1)$ and $H=(0,t)$, the function $t\mapsto \dim_H(E_f((0,t)))$ forms a devil's staircase. This result was extended in 2020 by Kalle, Kong, Langeveld and Li \cite{KKLL20} for $f:x\mapsto \beta x\ ({\rm mod}\ 1)$ and $H=(0,t)$ for any $\beta\in (1,2]$. In 2023, Langeveld and Samuel \cite{LS23} further generalized these results to $f:x\mapsto \beta x+\alpha\ ({\rm mod}\ 1)$. Regarding holes not near $0$, the relevant results are significantly scarcer. In 2024, Sun, Li and Ding \cite{SLD24} studied a function $f$ with a hole $H=(a, b)$ at the discontinuity point $c$, where $a\le c \le b$ and $a\neq b$. They proved that if $f$ is an expansive Lorenz map that has an ergodic absolutely continuous invariant measure, then for fixed $a\in [0,c]$, the topological entropy function $\lambda_f(a):b\mapsto h_{top}\left(f\vert_{E_f(a,b)}\right)$ forms a devil's staircase. In 2025, Sun and Li \cite{SL25} investigated the topological entropy of two bifurcation sets related to survivor set and extended some earlier results of Baker and Kong \cite{BK20}, Allaart and Kong \cite{AK23}.  
\par
All the above results are under the condition that $f$ is a  piecewise monotone interval map with finitely many branches. By a result of Raith \cite{R94}, the function $H\mapsto \dim_H(E_f(H))$ is continuous on $[0,1]$. Therefore, to prove that $t\mapsto \dim_H(E_f((0,t)))$ forms a devil's staircase, it suffices to prove that this function is locally constant almost everywhere and monotonic. In 2022, Carminati and Tiozzo \cite{CT22} considered the Gauss map $T$ with hole $H=(0,t)$. At first glance, this appears to be a problem for a piecewise monotonic map with infinitely many branches; however, for the purpose of computing $\dim_H(E_T(0,t))$, the hole at 0 effectively reduces the problem to finitely many branches. That is also the reason that Raith's result is applicable for Carminati and Tiozzo (see \cite[p. 26]{CT22}).
\par
In this paper, we consider the Gauss map $T$ with $H=(\alpha,1)$, where $\alpha\in [0,1]$. Due to the persistent influence of the infinite branches around $0$, the system constitutes a truly infinite piecewise monotone system. Define
\begin{align}\label{edy}
  E(\alpha)=\left\{x\in [0,1]: T^k(x)\le \alpha ,\ \forall k\ge 0\right\}.
\end{align}  
We first prove that the set valued function $\alpha\mapsto E(\alpha)$ is increasing and locally constant almost everywhere. Moreover, we characterize all intervals on which the map is constant. Denote
\[[a_1,a_2,\cdots,a_n]=\cfrac{1}{a_1+\cfrac{1}{\ddots +\cfrac{1}{a_n}}}\]
and
\[[a_1,\cdots,a_n,\overline{b_1,\cdots,b_m}]=\cfrac{1}{a_1+\cfrac{1}{\ddots +\cfrac{1}{a_n+\cfrac{1}{b_1+\cfrac{1}{\ddots+\cfrac{1}{b_m+\cfrac{1}{b_1+\ddots}}}}}}}.\]
For any finite word $\omega=(\omega_1,\cdots,\omega_n)\in\{1,2,\cdots\}^n$, denote
\begin{align}\label{def of q_w}
  \frac{p_{\omega}}{q_{\omega}}=[\omega_1,\cdots,\omega_n]=[\omega],
\end{align}
where $p_{\omega}$ and $q_{\omega}$ are coprime and positive.
\begin{theorem}\label{jh}
  The function $\alpha\mapsto E(\alpha)$ is increasing and locally constant almost everywhere. Moreover, the union of all constant intervals is
  \begin{align}\label{g}
    C:=\bigcup_{n=1}^{+\infty}C_1(n)\cup C_2(n)\cup C_3(n),
  \end{align} 
  where
  \[C_1(n)=\bigcup_{\substack{a_{1}\ge 2\\a_2,\cdots,a_{2n}\ge 1}}\bigg[[a_1,\cdots,a_{2n},a_1],[a_1,\cdots,a_{2n},1]\bigg)\]
  \[C_2(n)=\bigcup_{\substack{a_{1}\ge 2\\a_2,\cdots,a_{2n+1}\ge 1}}\bigg[[a_1,\cdots,a_{2n+1},1],[a_1,\cdots,a_{2n+1},a_1]\bigg)\]
  \[C_3(n)=\bigcup_{\substack{a_{2n+1}\ge 2\\a_2,\cdots,a_{2n}\ge 1}}\bigg[[1,a_2,\cdots, a_{2n+1}],[1,\cdots,a_{2n+1}-1,\overline{1,a_2}]\bigg).\]
\end{theorem}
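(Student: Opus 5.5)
Monotonicity is immediate from the definition \eqref{edy}: if $\alpha\le\alpha'$, then every orbit that stays in $[0,\alpha]$ also stays in $[0,\alpha']$. The real content is to identify exactly where $E(\alpha)$ changes. The basic observation is that $E(\alpha)$ is determined by $S(\alpha):=E(\alpha)\cap[0,\alpha]$, and that $E(\alpha)=E(\alpha')$ for $\alpha<\alpha'$ if and only if no point of $E(\alpha')$ has an orbit point in $(\alpha,\alpha']$. Define the ``top'' of $E(\alpha)$ by
\[
M(\alpha):=\sup\{T^k x : x\in E(\alpha),\ k\ge 0\}=\sup E(\alpha).
\]
Then $E(\alpha)$ is constant on $[M(\alpha),\alpha]$, and the constant intervals are exactly the maximal intervals $[\beta,\gamma)$ with $\beta=M(\beta)$ a \emph{self-admissible} point, meaning $\beta\in E(\beta)$, or equivalently $T^k\beta\le\beta$ for all $k$.

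I would work with continued fraction digits and the alternating order: $[a_1,a_2,\dots]\le[b_1,b_2,\dots]$ is decided at the first differing index $i$, reversed when $i$ is even. The condition $x\in E(\alpha)$ then becomes the requirement that every shift $\sigma^k(a)$ is $\preceq$ the expansion of $\alpha$ in the alternating order. Its largest element is characterised by a lexicographic ``max of shifts'' property, in the spirit of the classical descriptions of kneading invariants for $\beta$-shifts and of the Carminati--Tiozzo analysis of $E_T(0,t)$.

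\textbf{Step 1.} Show that $\beta$ is a left endpoint of a constant interval precisely when its digit sequence is \emph{extremal}, that is, $\sigma^k(\beta)\preceq\beta$ for all $k$. Among these I would single out the periodic extremal points, since non-periodic extremal points should form a null set. The periodic candidates are $[\overline{a_1,\dots,a_m}]$. For each one I compute the right endpoint $\gamma$, defined as the smallest $\alpha>\beta$ at which a new orbit enters. This $\gamma$ is obtained by checking whether appending a digit or concatenating blocks yields a sequence that still lies below $\alpha$. The parity of $m$ matters, because appending a final digit increases or decreases the value depending on the parity of its position. This parity split is exactly what produces the two families $C_1(n)$ (even length $2n$, with endpoints $[a_1,\dots,a_{2n},a_1]$ and $[a_1,\dots,a_{2n},1]$) and $C_2(n)$ (odd length, with the endpoints swapped). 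The case $a_1=1$, where the digit $1$ can be used freely and the supremum is reached through $\overline{1,a_2}$ tails, gives $C_3(n)$. Here I would verify directly that no orbit point of $E(\gamma-)$ lies in $(\beta,\gamma)$, and that at $\gamma$ itself a new periodic orbit appears.

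\textbf{Step 2.} Show that $[0,1]\setminus C$ has Lebesgue measure zero, so that $C$ is indeed all of the constancy region up to a null set. The complement consists of $\alpha$ that are self-admissible with non-periodic behaviour, essentially points whose forward orbit stays in $[0,\alpha]$ and accumulates at $\alpha$. For a fixed $\alpha_0<1$ these lie in $E(\alpha_0)$ together with their dyadic-type refinements, and $E(\alpha_0)$ has Lebesgue measure zero by ergodicity of the Gauss measure. Taking a countable union over rational $\alpha_0$ and exploiting monotonicity should then cover the bad set.

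The main obstacle will be Step 1. It requires proving maximality: each listed interval cannot be extended, and every constant interval is contained in some listed interval. This needs a careful combinatorial analysis of the alternating order on the infinite alphabet $\mathbb{N}$, including showing that the right endpoints given by the bracket formulas are exactly the first points at which a new admissible sequence appears. I would first do the $C_1$ case in detail via explicit comparisons of finite blocks, and then adapt that argument to the other two families.
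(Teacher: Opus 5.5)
\textbf{Gap in Step 1: the claimed identification of $C$ with the maximal constancy intervals is false.} Step 1 cannot be repaired by doing the combinatorics more carefully. You aim to show two things: that the listed intervals are exactly the maximal constancy intervals $[\beta,\gamma)$, with $\beta$ a (periodic) extremal point and a new orbit entering at $\gamma$; and that every constancy interval lies inside a listed one. Neither holds.

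The left endpoints already do not fit. Since $T^{2n}[a_1,\dots,a_{2n},a_1]=1/a_1>[a_1,\dots,a_{2n},a_1]$, no left endpoint of a $C_1(n)$-interval is extremal. Hence no $C_1(n)$-interval is maximal.

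Concretely, $C_1(1)$ contains $[[2,1,2],[2,1,1])=[3/8,2/5)$, whereas $\mathcal{E}\cap(1/3,\sqrt2-1)=\emptyset$. To see this, suppose $y$ lies in this interval and in $\mathcal{E}$. Then $a_1(y)=2$, and $y$ has no partial quotient $1$, since otherwise some iterate lies in $(1/2,1]$. As $y<[\overline{2}]$, the first index $k$ with $a_k(y)\neq 2$ is odd, $k\ge 3$, and $a_k(y)\ge 3$. Then $T^{k-2}y=[2,a_k(y),\dots]>[2,2,\dots]=y$, a contradiction; rational $y$ are handled the same way. By Proposition~\ref{propbb1} the maximal interval here is $[1/3,\sqrt2-1)$. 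Its left endpoint is rational and the periodic point is the \emph{right} endpoint. Nothing new enters $E$ at $\gamma=2/5$.

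Containment also fails outright. The point $x=[2,2,\overline{3}]\approx 0.4108$ lies in $(1/3,\sqrt2-1)$, so $E$ is constant near $x$. But $x\notin C$: first, $C_3(n)\subset[1/2,1]$; second, for $a_1=2$, every point of a $C_1(n)$-interval (resp.\ $C_2(n)$-interval) other than its left endpoint has partial quotient $1$ at position $2n+1$ (resp.\ $2n+2$). So the maximality half of Step 1 cannot be proved. Separately, ``left endpoint iff extremal'' also fails in the other direction. Points of $\mathcal{E}$ accumulated from the right by $\mathcal{E}$ are extremal but are not left endpoints.

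\textbf{Step 2 and what is actually provable.} The same $x$ refutes Step 2's description of $[0,1]\setminus C$ as consisting of self-admissible points. So covering $[0,1]\setminus C$ by the null sets $E(\alpha_0)$ does not work. Nullity of $[0,1]\setminus C$ is true, but it comes from digit statistics. For example, among points with $a_1(x)\ge 2$, the complement of $C$ is, up to countably many points, $\{x: a_k(x)\ge a_1(x)\ \forall k\ge 3\}$.

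The paper never attempts maximality.
\begin{itemize}
\item Your $M(\alpha)$ observation is its Proposition~\ref{propbb1}.
\item Your ergodicity argument is applied to $\mathcal{E}$ itself: $\mathcal{E}\subset\bigcup_n E(1-1/n)\cup\{1\}$, with each $E(1-1/n)$ a $T$-invariant null set. This gives local constancy almost everywhere (Proposition~\ref{aejc}).
\item Constancy on each listed interval is checked directly (Lemma~\ref{zf}, Corollary~\ref{bb2}). If $T^kx\in([a_1,\dots,a_{2n},a_1],[a_1,\dots,a_{2n},1])$, then $a_{2n+1}(T^kx)<a_1$. So $T^{k+2n}x=[s,\dots]$ with $s<a_1$, which lies above the whole interval. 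The case $C_2(n)$ is similar.
\item For $C_3(n)$ one iterates Lemma~\ref{zf} along the points $[1,a_2,\dots,a_{2n+1}-1,(1,a_2)^m,1,a_2+1]$. These increase to $[1,a_2,\dots,a_{2n+1}-1,\overline{1,a_2}]$.
\item The paper then only checks that $C$ is dense.
\end{itemize}

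That is the content you can actually prove: constancy on every listed interval, plus local constancy off the closed null set $\mathcal{E}$. The exact identification of $C$ with the union of all constancy intervals, which your Step 1 targets, is refuted by the example above.
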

\begin{corollary}\label{elx}
  The function $\alpha\mapsto \dim_H(E(\alpha))$ is increasing and locally constant almost everywhere on $[0,1]$, but it jumps from $0$ to $\frac{1}{2}$ at $0$.
\end{corollary}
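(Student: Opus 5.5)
Corollary~\ref{elx} follows from Theorem~\ref{jh} together with two endpoint computations. Monotonicity is immediate: Theorem~\ref{jh} gives $E(\alpha)\subseteq E(\alpha')$ for $\alpha\le\alpha'$, and Hausdorff dimension is monotone under inclusion. For local constancy almost everywhere, Theorem~\ref{jh} says $\alpha\mapsto E(\alpha)$ is constant on each interval in $C$ and that $C$ has full Lebesgue measure in $[0,1]$. Every $\alpha$ in the interior of a constant interval has a neighbourhood on which $E(\cdot)$, and hence $\dim_H E(\cdot)$, is constant. The left endpoints of the half-open intervals and the complement of $C$ form a null set, since there are countably many intervals and the complement is null. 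So $\alpha\mapsto\dim_H E(\alpha)$ is locally constant on a set of full measure.

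For the behaviour at $0$, I first compute $E(0)$. If $x\in E(0)$ then $x\le 0$, so $x=0$. With the standard convention $T(0)=0$ the orbit stays at $0$, so $E(0)=\{0\}$ and $\dim_H E(0)=0$. The main content is to show that $\dim_H E(\alpha)\ge \tfrac12$ for every $\alpha>0$ and that $\lim_{\alpha\to0^+}\dim_H E(\alpha)=\tfrac12$. By monotonicity the limit exists, and it equals $\inf_{\alpha>0}\dim_H E(\alpha)$.

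For the lower bound, fix $\alpha>0$ and choose $N$ with $1/N<\alpha$. Consider the set of irrationals $x=[a_1,a_2,\dots]$ with all $a_i\ge N+1$. Every iterate $T^k x=[a_{k+1},\dots]$ is then $<1/(N+1)<\alpha$, so this set lies in $E(\alpha)$. By Good's classical theorem, the set of continued fractions with all partial quotients $\ge M$ has Hausdorff dimension tending to $\tfrac12$ from above as $M\to\infty$, and in fact it is $>\tfrac12$ for each $M$. I would either cite Good, or argue via the infinite conformal IFS $\{\phi_a(x)=1/(a+x)\}_{a\ge M}$: its pressure function is
\[
P(t)=\lim_n \tfrac1n\log\sum_{|\omega|=n}q_\omega^{-2t},
\]
which diverges at $t=\tfrac12$ since $\sum_a a^{-2t}=\infty$, and Mauldin--Urba\'nski then gives dimension $\ge\tfrac12$. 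Hence $\dim_H E(\alpha)\ge\tfrac12>0$ for all $\alpha>0$.

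For the upper bound as $\alpha\to0^+$, note that if $\alpha<1/N$ then every $x\in E(\alpha)\setminus\mathbb Q$ has all partial quotients $\ge N$, because $T^k x\le\alpha<1/N$ forces $a_{k+1}\ge N$. Rationals in $E(\alpha)$ are countable, so $E(\alpha)$ is contained in Good's set up to a countable set. Its dimension $s_N$ tends to $\tfrac12$. I would prove this by bounding the pressure: with $t>\tfrac12$, $\sum_{|\omega|=n}q_\omega^{-2t}\le\bigl(c\sum_{a\ge N}a^{-2t}\bigr)^n$ using $q_{\omega}\ge \prod \omega_i$ up to constants; taking $N$ large makes this $<1$, so $s_N\le t$. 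Thus $\dim_H E(\alpha)\to\tfrac12$ as $\alpha\downarrow0$, while $\dim_H E(0)=0$, which is the jump from $0$ to $\tfrac12$.

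The main obstacle is the endpoint analysis, specifically justifying that the limit is exactly $\tfrac12$. That relies on Good's result or on a clean pressure estimate for the infinite system; everything else is a direct consequence of Theorem~\ref{jh}.
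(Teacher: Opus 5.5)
Your proposal is correct and follows essentially the paper's route. Monotonicity and almost-everywhere local constancy are inherited from Theorem~\ref{jh}, $E(0)=\{0\}$ gives $d(0)=0$, and Good's theorem on $\{x: a_i(x)\ge M\ \forall i\}$ squeezes $\dim_H E(\alpha)$ down to $\tfrac12$ as $\alpha\downarrow 0$, as in Lemma~\ref{basic:half}. Your cylinder-cover estimate $\sum_{|\omega|=n} q_\omega^{-2t}\le\bigl(\sum_{a\ge N}a^{-2t}\bigr)^n$ makes the upper half of Good's result self-contained, where the paper simply cites \cite{G41}.
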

\begin{remark}
  We highlight that by Corollary \ref{elx}, we obtain a novel phenomenon. In previous studies, the function $t\mapsto \dim_H(E_f((0,t)))$ invariably forms a devil's staircase--a continuous function that is constant almost everywhere yet surjective. Especially, the continuity is ensured by Raith's theorem \cite[Theorem 2]{R94}. However, in our cases, the function $\alpha\mapsto \dim_H(E(\alpha))$ jumps from $0$ to $\frac{1}{2}$ at the endpoint $0$. This abrupt phase transition arises from the persistent influence of infinitely many branches near $0$, which fundamentally alters the system's dimensional behavior. The proof of \cite[Theorem 2]{R94} critically relies on the finiteness of the number of branches. In fact, by this, the author constructed two bounded linear operators, so that one can use some methods in bounded linear operator theory to prove the continuity of $t\mapsto \dim_H(E_f((0,t)))$.
\end{remark}
Define the \textit{bifurcation set} of the function $\alpha\mapsto E(\alpha)$ as the complement of the set of points where $\alpha\mapsto E(\alpha)$ is locally constant. The following theorem describes the bifurcation set of $\alpha\mapsto E(\alpha)$. 
\begin{theorem}\label{bifurcation set}
  The bifurcation set for the set-valued function $\alpha\mapsto E(\alpha)$ is exactly the set 
  \begin{align}\label{defofe}
    \mathcal{E}:=\left\{x\in [0,1]: T^k(x)\le x,\ \forall k\ge 0\right\}.
  \end{align}
  Moreover, we have $\dim_H(\mathcal{E})=1$.
\end{theorem}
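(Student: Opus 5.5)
Two things have to be shown: that $\mathcal{E}$ is exactly the bifurcation set, and that $\dim_H(\mathcal{E})=1$.

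\textbf{Step 1: bifurcation points lie in $\mathcal{E}$.} Monotonicity of $\alpha\mapsto E(\alpha)$ is given by Theorem \ref{jh}. Suppose $\alpha\notin\mathcal{E}$. Then $\alpha\notin E(\alpha)$, so $T^k(\alpha)>\alpha$ for some $k$. I would show that $E(\beta)=E(\alpha)$ for all $\beta$ in a neighbourhood of $\alpha$. Set
\[
m(\beta)=\sup E(\beta)=\max E(\beta),
\]
which is a maximum because $E(\beta)$ is closed for $\beta<1$. Since $E(\beta)\subset[0,\beta]$, we have $E(\beta)=E(m(\beta))$. So $E$ is constant on $[m(\alpha),\alpha]$.

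For the right side I would use the fact that $T^k$ is continuous near $\alpha$ away from the rationals. This gives $T^k(\beta)>\beta$ for $\beta$ close to $\alpha$. If $\alpha$ is rational, I would instead use the first return index of the orbit and the cylinder structure. The key claim is then that $m(\beta)\to m(\alpha)$ as $\beta\downarrow\alpha$.

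I would get this from the correspondence used in Theorem \ref{jh}. The points of $\mathcal{E}$ are exactly the values $m(\beta)$, and each complementary gap of $\mathcal{E}$ is one of the intervals in $C$. So a non-$\mathcal{E}$ point lies in the interior of a constant interval, or is a left endpoint of one. The left endpoints listed in $C$ have the form $[a_1,\dots,a_{2n},a_1]$, $[a_1,\dots,a_{2n+1},1]$, or $[1,a_2,\dots,a_{2n+1}]$. Each of these should be checked to lie in $\mathcal{E}$. Hence every $\alpha\notin\mathcal{E}$ is interior to $C$ and is not a bifurcation point.

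\textbf{Step 2: points of $\mathcal{E}$ are bifurcation points.} Take $x\in\mathcal{E}$. Then $x\in E(x)$ but $x\notin E(\beta)$ for every $\beta<x$. So $E(\beta)\neq E(x)$ for all $\beta<x$, and $E$ is not locally constant at $x$. The case $x=0$ is trivial, since $E$ is only defined for $\alpha\ge 0$; we treat $0$ as a bifurcation point by convention, and $0\in\mathcal{E}$ because $T(0)=0$. Hence the bifurcation set equals $\mathcal{E}$.

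\textbf{Step 3: $\dim_H(\mathcal{E})=1$.} It suffices to find, for each $N$, a subset of $\mathcal{E}$ of dimension at least $\dim_H(F_N)-o(1)$, where
\[
F_N=\{x: a_i(x)\le N\ \forall i\}
\]
is the bounded-type set and $\dim_H F_N\to1$ by Jarník. I would take
\[
x=[M,b_1,b_2,\dots],\qquad b_i\le N,\quad M>N,
\]
with $M$ large. Every shift $T^k x$ with $k\ge1$ has first digit at most $N<M$, so $T^kx>x$ is impossible in the Gauss ordering: a smaller first digit means a larger value. One detail is that this requires $b_1\ge1$. Since $T^kx\le x$ must hold, and $T^kx=[b_k,\dots]$ with $b_k<M$, we get $T^kx\ge 1/(b_k+1)>1/M\ge x$, which goes the wrong way.

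So I need the opposite construction: $x$ should be near the top of $[0,1]$, so that every iterate stays below it. I would take
\[
x=[1,1,\dots,1,b_1,b_2,\dots]
\]
with a long initial block $1^{2L}$ and $b_i\in\{2,\dots,N\}$. The lexicographic ordering for continued fractions alternates with parity. Comparing $T^kx$ with $x$ digit by digit then shows $T^kx\le x$, because the first digit of $T^kx$ is $\ge2$ when $k\ge 2L$, while for $k<2L$ the shifted block is shorter and the parity comparison works out. Adjusting $L$ or adding one extra $1$ may be needed to fix the parity. The set $\{[1^{2L},b_1,b_2,\dots]:2\le b_i\le N\}$ is a bi-Lipschitz image of the bounded-type set with digits in $\{2,\dots,N\}$. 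Its dimension tends to $1$ as $N\to\infty$ (Jarník–Good), so $\dim_H\mathcal{E}=1$.

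\textbf{Main obstacle.} The hard part is Step 1 at left endpoints of constant intervals and at rational $\alpha$. There one must verify precisely that the endpoints in $C$ belong to $\mathcal{E}$ and that nothing outside $\mathcal{E}$ fails local constancy. This relies on the fine structure behind Theorem \ref{jh}.
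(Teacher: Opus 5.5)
Your Step 3 fails on two counts. First, the proposed points are not in $\mathcal{E}$. Suppose $x$ starts with a block $1^j$, $j\ge 2$, followed by $b_1\ge 2$. Then $T^{j-1}x=[1,b_1,\dots]$ and $x=[1,1,\dots]$ first differ at the second (even) position, so $T^{j-1}x>x$. For example, $x=[1,1,\overline{2}]\approx 0.586$ has $Tx\approx 0.707$. No parity adjustment helps: only $j=1$ survives. Second, even $\{[1,b_1,b_2,\dots]:b_i\ge 2\}$ has dimension at most $\dim_H\{x:a_i(x)\ge 2\ \forall i\}<1$. Jarník's limit $\dim_H\{a_i\le N\}\to 1$ depends on the digit $1$ being allowed, so $\dim_H\{a_i\in\{2,\dots,N\}\}$ does not tend to $1$. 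The missing idea is to keep $1$ available in the tail and let a large second digit of $x$ dominate. The paper takes $x=[1,m,x_3,x_4,\dots]$ with $x_i\le m-1$ for $i\ge 3$. An iterate whose first digit is $\ge 2$ is at most $1/2<x$. An iterate $[1,x_{k+2},\dots]$ is smaller than $x$ because it loses at position $2$, since $x_{k+2}<m$. This set is bi-Lipschitz to $\{a_i\le m-1\}$, whose dimension tends to $1$.

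Step 1 also does not close as written. Your left-side argument ($E$ constant on $[m(\alpha),\alpha]$) is fine. For the right side, though, the claim you isolate, $m(\beta)\to m(\alpha)$, is too weak: you need $m(\beta)=m(\alpha)$. Deriving it from Theorem \ref{jh} also rests on a false assertion, namely that left endpoints of the listed intervals lie in $\mathcal{E}$. For instance, $[2,1,1,1]=3/8$ is the left endpoint of an interval in $C_2(1)$, yet $T(3/8)=2/3>3/8$. The intervals in $C$ are constant intervals, not the gaps of $\mathcal{E}$.

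You do not need Theorem \ref{jh} here. First, $m(\beta)=\max(\mathcal{E}\cap[0,\beta])$: $m(\beta)\in\mathcal{E}$ by forward invariance of $E(\beta)$, and every $y\in\mathcal{E}\cap[0,\beta]$ lies in $E(\beta)$. Next, if $T^k(\alpha)>\alpha$, no $T^j(\alpha)$ with $j<k$ equals $0$ or $1/n$, since otherwise $T^k(\alpha)=0$. So $T^k$ is continuous at $\alpha$ whether or not $\alpha$ is rational. Hence some neighbourhood $U$ of $\alpha$ misses $\mathcal{E}$, and $m$ is constant on $U$; so is $E(\beta)=E(m(\beta))$. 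This is the paper's route (Propositions \ref{bfca} and \ref{propbb1}), and it removes your ``main obstacle''.

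Step 2 agrees with the paper. At $0$ no convention is needed, since $E(\beta)\supset E(1/M)\ne\{0\}$ whenever $1/M\le\beta$.
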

Before giving the Hausdorff dimension of $E(\alpha)$ for all $\alpha\in [0,1]\cap \mathbb{Q}$, we first give some notations.
\par
For $\alpha\in (0,1]$, define 
\[P_n(t,\alpha)=\frac{1}{n}\log \underset{\omega\in\mathcal{D}_{n}(\alpha)}{\sum}q_{\omega}^{-2t},\]
where
\begin{align}\label{defdn}
  \mathcal{D}_{n}(\alpha):=\left\{(a_1(x),\cdots,a_n(x)): x\in E(\alpha)\right\}.
\end{align}
By \cite[pp. 6--7]{MU03}, the following limit exists
\[P(t,\alpha):=\lim\limits_{n\to +\infty}P_n(t,\alpha).\]

\begin{theorem}\label{wsgs}
  For any $\alpha\in [0,1]$, we have the following Hausdorff dimension formula of $E(\alpha)$.
  \begin{enumerate}[1)]
    \item We have $\dim_H(E(0))=0$.
    \item Suppose $\alpha\in (0,1]\cap \mathbb{Q}$. Then, there exists a unique $t>0$, such that $P(t,\alpha)=0$, and we have
    \[\dim_H(E(\alpha))=\inf \{t\ge 0: P(t,\alpha)\le 0\}=\{t>0:P(t,\alpha)=0\}.\]
    \item Suppose $\alpha=[a_1,a_2,\cdots]\notin\mathbb{Q}$. Let $\alpha_{2j}=[a_1,\cdots,a_{2j}]$ and $\alpha_{2j+1}=[a_1,\cdots,a_{2j+1}]$. Then,
    \[\dim_H(E(\alpha))=\lim\limits_{j\to +\infty}\dim_H(E(\alpha_{2j}))=\lim\limits_{j\to +\infty}\dim_H(E(\alpha_{2j+1})).\]
  \end{enumerate}
\end{theorem}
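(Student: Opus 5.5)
Part 1) follows directly from the definitions. If $x\in E(0)$, then $T^0(x)=x\le 0$, so $E(0)\subset\{0\}$ and $\dim_H(E(0))=0$. The convention $T(0)=0$ ensures $0$ actually survives, but that is not needed for the dimension bound.

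For part 2), I would encode $E(\alpha)$, for rational $\alpha\in(0,1]$, as the limit set of a finitely-primitive conformal graph directed Markov system (GDMS) in the sense of Mauldin--Urbański \cite{MU03}, and then apply their Bowen formula. Write $\alpha=[a_1,\dots,a_m]$. The condition $T^k(x)\le\alpha$ for all $k$ becomes a lexicographic condition on the digit sequence of $x$ under the alternating order of continued fractions: every shift $(a_{k+1}(x),a_{k+2}(x),\dots)$ must be $\succeq$ the expansion of $\alpha$ in that order, since the maps $[b_1,b_2,\dots]$ decrease in odd coordinates and increase in even ones. Because $\alpha$ is rational, this is a subshift of finite type. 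The states are the prefixes of $(a_1,\dots,a_m)$ currently being tracked, plus a ``free'' state. Each state has countably many outgoing edges labelled by digits $b$: any digit $b>a_{j+1}$ (at an odd position) leads back to the free state, and these give infinitely many edges.

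The edge maps are the inverse branches $x\mapsto 1/(b+x)$ restricted to the appropriate cylinders. I would check the open set condition, the cone condition and bounded distortion. These are standard for Gauss inverse branches, with one care point: the digit $1$ has contraction bounded only on sets away from $0$, so I would use second iterates or the usual trick of restricting to $[0,1]$ with bounded distortion of $T^2$. Finite primitivity holds because from any state we can return to the free state using a large digit, and from the free state we can reach any state.

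The cylinder lengths are comparable to $q_\omega^{-2}$, so the MU pressure coincides with $P(t,\alpha)$ as defined; the set $\mathcal D_n(\alpha)$ is exactly the set of admissible words of length $n$. By \cite{MU03}, $\dim_H(\text{limit set})=\inf\{t:P(t,\alpha)\le0\}$. The limit set differs from $E(\alpha)$ only by countably many rational points, which does not change the dimension.

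For the equality with the zero of $P$, I would show that the system is \emph{strongly regular}, i.e.\ that some $t$ gives $0<P(t,\alpha)<\infty$. The finiteness parameter is $\theta=\inf\{t:P(t,\alpha)<\infty\}=1/2$, because $\sum_b b^{-2t}$ converges exactly when $t>1/2$. Near $\theta$ the pressure tends to $+\infty$: as $t\downarrow 1/2$, the sum over digits $b$ large enough to return to the free state diverges. Since $P$ is continuous and strictly decreasing on $(1/2,\infty)$, and $P(t,\alpha)\to-\infty$ as $t\to\infty$, there is a unique zero $t>1/2$. This unique zero is positive, and it equals the infimum.

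I expect the main obstacle to be the exact lexicographic description and the handling of the endpoint cases. First, $\alpha$ has two continued fraction expansions, and I would choose the one that makes the constraint a clean SFT. Second, there is the boundary equality case, where an orbit lands exactly on $\alpha$ and must be allowed. For part 3), I would use monotonicity: $\alpha_{2j}<\alpha<\alpha_{2j+1}$ (or the reverse, depending on the parity convention), so by Theorem \ref{jh},
\[\dim_H E(\alpha_{2j})\le\dim_H E(\alpha)\le\dim_H E(\alpha_{2j+1}).\]
I would then show that both sequences are monotone and have the same limit $\dim_H E(\alpha)$. For the lower bound, $E(\alpha)$ contains the limit sets of the subsystems $E(\alpha_{2j})$, whose union $\bigcup_j$ exhausts the words admissible for $\alpha$. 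The pressures $P(t,\alpha_{2j})$ increase to $P(t,\alpha)$, and the MU approximation theorem (dimension is the supremum over finite or sub-systems) gives convergence. For the upper bound, I would use the fact that the admissible words of length $n$ for $\alpha_{2j+1}$ and for $\alpha$ coincide when $n\ll j$. Combined with the uniform decay $P(t,\cdot)<0$ for $t$ slightly above the limit, this forces
\[\limsup_j\dim_H E(\alpha_{2j+1})\le\dim_H E(\alpha).\]
Making this upper semicontinuity uniform is the delicate step; I would first try a direct covering of $E(\alpha_{2j+1})$ by cylinders of length $2j$, plus a tail estimate.
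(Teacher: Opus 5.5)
Parts 1) and 2) follow essentially the paper's route. For rational $\alpha$ you code $E(\alpha)$ as a countable-alphabet Markov system with finitely many states and apply the Mauldin--Urba\'nski Bowen formula. Getting the unique zero from $P(t,\alpha)\to+\infty$ as $t\downarrow 1/2$ is a valid alternative to the paper's slope bound $P(t,\alpha)\le P(s,\alpha)-2(t-s)\log\varphi$.

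One correction in part 2: large digits are not always available from a tracking state. A shorter active match whose next comparison falls at an even position forbids them, and the graph need not be irreducible. For $\alpha=[2,2,2]=5/12$, any digit $2$ forces the tail $\overline{2}$, so the free state is unreachable from those states. You must restrict to the free state's component and show the rest is negligible; the paper does this for the case $a_2\le N$ with the squeeze $E(\frac{1}{N+1})\subset E(\alpha)\subset E([N,N+1])$.

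\textbf{The genuine gap is part 3.} The sandwich $d(\alpha_{2j})\le d(\alpha)\le d(\alpha_{2j+1})$ is fine, but neither half of your closing argument works.

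On the left, the claim that the $E(\alpha_{2j})$ exhaust the words admissible for $\alpha$ is false. Take $\alpha=[\overline{1}]=(\sqrt5-1)/2$. Every $\alpha_{2j}$ lies in $[1/2,\alpha)$, where $E(\cdot)\equiv E(1/2)$ and no point has a digit $1$: a block $1,b$ with $b\ge2$ gives a shift $\ge 2/3$, and a tail $\overline{1}$ gives the shift $\alpha$. Yet $T\alpha=\alpha$, so $(1,\dots,1)\in\mathcal{D}_n(\alpha)$ for every $n$.

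Even where words are exhausted, this does not give convergence of pressure. Put $Q_n(t,\beta)=\sum_{w\in\mathcal{D}_n(\beta)}q_w^{-2t}$. Invariance of $E(\beta)$ and $q_{uv}\ge q_uq_v$ give $Q_{n+m}\le Q_nQ_m$, so $P(t,\beta)=\inf_n\frac1n\log Q_n(t,\beta)$. Monotone convergence of each $Q_n(t,\alpha_{2j})$ therefore only yields the trivial inequality $\lim_j P(t,\alpha_{2j})\le P(t,\alpha)$.

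The Mauldin--Urba\'nski approximation theorem does not rescue this. It concerns finite sub-alphabets of one finitely irreducible system with finitely many vertices. For irrational $\alpha$, $E(\alpha)$ is in general not of finite type, and the $E(\alpha_{2j})$ are not alphabet restrictions of such a system.

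On the right, grant $\mathcal{D}_n(\alpha_{2j+1})\downarrow\mathcal{D}_n(\alpha)$ for fixed $n$. Submultiplicativity plus dominated convergence (for $t>1/2$) then gives only $\lim_j d(\alpha_{2j+1})\le s(\alpha)$, where $s(\alpha)$ is the zero of $P(\cdot,\alpha)$. Covering by cylinders gives only $d(\alpha)\le s(\alpha)$. So both halves reduce to the same unproved statement: $P(t,\alpha)>0$ forces $P(t,\alpha_{2j})>0$ for large $j$. Your proposal has no mechanism for this.

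The paper never uses a dimension formula at irrational $\alpha$. It reads part 3 off from continuity of $d$ on $(0,1]$ (Theorem~\ref{lx}), proved spectrally:
\begin{itemize}
\item For $t\in(1/2,1)$, the open operators $\mathcal{L}_{t,\alpha}=\mathcal{L}_t(\mathbf{1}_{[0,\alpha]}\,\cdot)$ on $BV$ satisfy a Lasota--Yorke inequality uniform in $\alpha$. The infinitely many branches are handled by tail summability.
\item The conformal measure $\nu_t$ is atomless, so $\alpha\mapsto\mathcal{L}_{t,\alpha}$ is continuous in the strong-to-weak norm. Keller--Liverani then gives stability of isolated spectrum outside a uniform radius $\kappa_t<1$.
\item At rational $\alpha$, $\rho(\mathcal{L}_{t,\alpha})=e^{P(t,\alpha)}$, with $\rho(\mathcal{L}_{t,\alpha})\le\varphi^{-2(t-s)}\rho(\mathcal{L}_{s,\alpha})$.
\end{itemize}
Suppose $d_-<t<d_+$ at $\alpha_0$. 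Rational right approximants produce an eigenvalue of $\mathcal{L}_{t,\alpha_0}$ of modulus $>1$. That eigenvalue persists to the rational left approximants, whose spectral radius is $<1$, a contradiction. Only rational parameters are used, and the single operator at $\alpha_0$ carries information from right to left. This two-sided bridge is the idea missing from your proposal.
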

\begin{figure}[htbp]
    \centering
    \includegraphics[width=0.95\linewidth]
        {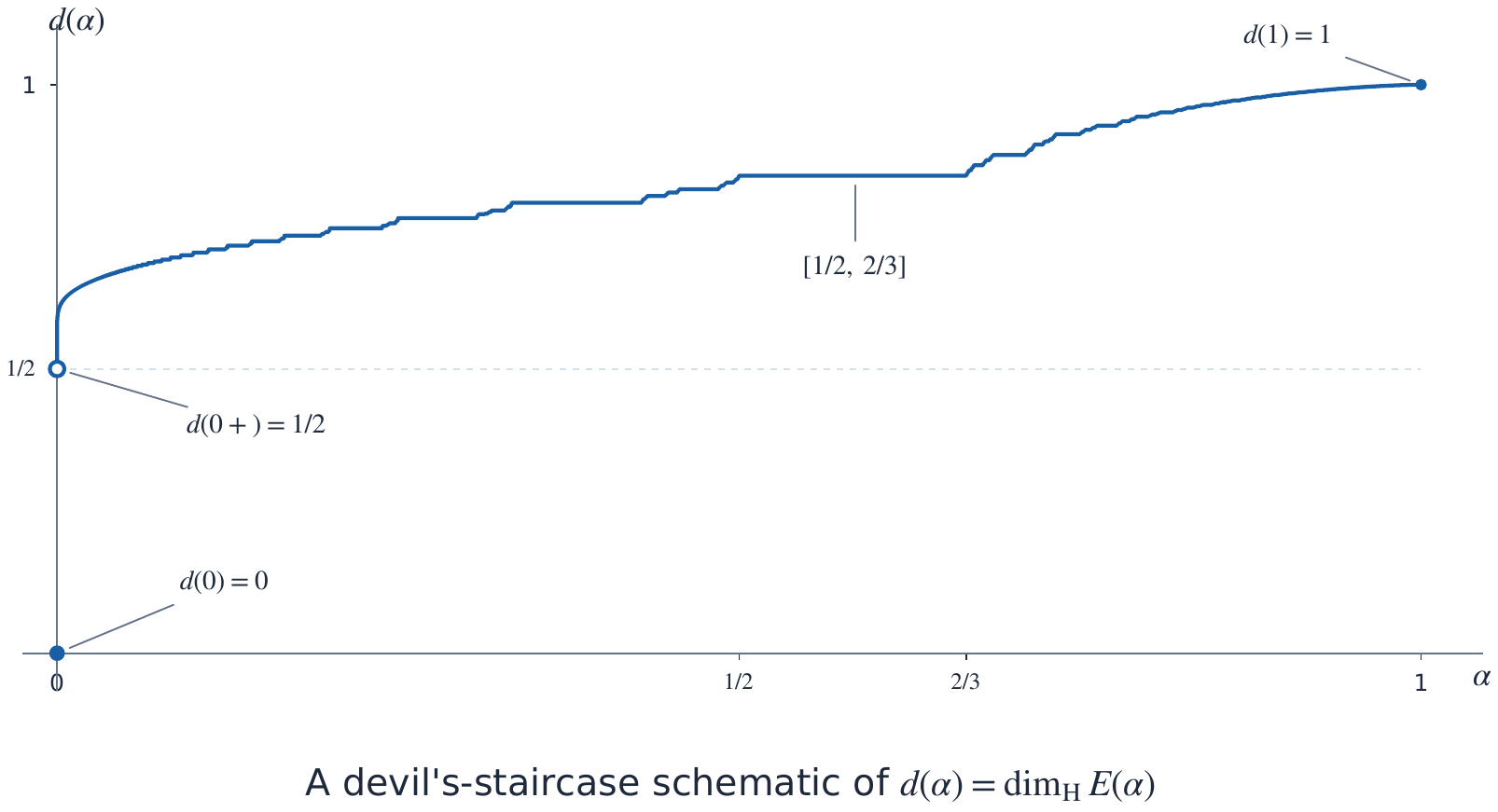}
\end{figure}
We also describe the continuity of the function $\alpha\mapsto \dim_H(E(\alpha))$. 
\begin{theorem}\label{lx}
  The function $\alpha\mapsto \dim_H(E(\alpha))$ is continuous on $(0,1]$, but not continuous at $0$. Moreover, the function $\alpha\mapsto \dim_H(E(\alpha))$ forms a devil's staircase on $(0,1]$.
\end{theorem}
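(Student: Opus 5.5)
We prove Theorem \ref{lx} in three parts: discontinuity at $0$, continuity on $(0,1]$, and the devil's staircase property.

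\textbf{Discontinuity at $0$.} By Theorem \ref{wsgs} 1), $\dim_H(E(0))=0$. For any $\alpha>0$, choose $N$ with $1/N<\alpha$. Then $E(\alpha)$ contains all $x$ whose partial quotients are all at least $N$, since then $T^k(x)<1/N<\alpha$ for every $k$. By Good's theorem this set has dimension $\ge 1/2$. Hence $\liminf_{\alpha\to0^+}\dim_H E(\alpha)\ge 1/2>0$, which gives the discontinuity (and the jump to $\tfrac12$ in Corollary \ref{elx}).

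\textbf{Continuity on $(0,1]$.} Write $d(\alpha)=\dim_H E(\alpha)$, which is increasing by Theorem \ref{jh}. Monotonicity gives one-sided limits everywhere, so it suffices to exclude jumps. We split into irrational and rational $\alpha$.

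\emph{Irrational $\alpha$.} The convergents $\alpha_{2j}$ and $\alpha_{2j+1}$ approach $\alpha$ from opposite sides. By monotonicity,
\[
d(\alpha_{\text{below}})\le d(\beta)\le d(\alpha_{\text{above}})
\]
for all $\beta$ between the convergents. Theorem \ref{wsgs} 3) says both sequences of dimensions tend to $d(\alpha)$, so $d$ is continuous at $\alpha$.

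\emph{Rational $\alpha$.} Write $\alpha$ as a finite continued fraction in both of its forms ($a_n$ and $a_n-1,1$), giving two different parities. Then check that $\alpha$ lies in the interior or at an endpoint of one of the constancy intervals $C_i(n)$ from Theorem \ref{jh} on at least one side. On the remaining side, we approximate using the pressure characterization of Theorem \ref{wsgs} 2). The plan is to show that $\alpha\mapsto P(t,\alpha)$ is continuous at rational $\alpha$, uniformly for $t$ in compact subsets of $(1/2,1]$. Since $P(\cdot,\alpha)$ is strictly decreasing with slope bounded away from $0$ (the derivative is at most $-2\log$ of the golden ratio), continuity of the zero then follows.

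To compare $P(t,\alpha)$ with $P(t,\beta)$, note that $\mathcal D_n(\beta)\setminus\mathcal D_n(\alpha)$ consists of words that, at some position, start a tail with prefix near $\alpha$'s expansion of length $\ge m$, where $m\to\infty$ as $\beta\to\alpha$. Bounding the contribution of these words by $q^{-2t}$ sums over long forced blocks gives $|P(t,\beta)-P(t,\alpha)|\le \varepsilon(m)\to0$. Because $t>1/2$ stays away from the convergence abscissa of $\sum_a a^{-2t}$, the infinite alphabet only enters through summable tails. This is the step where the infinitely-many-branches issue bites, and it is why the argument fails at $\alpha=0$.

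\textbf{Devil's staircase.} Theorem \ref{jh} shows $d$ is locally constant on $C$, which has full measure, and $d$ is increasing. Combined with continuity on $(0,1]$, this makes $d$ a devil's staircase on $(0,1]$. It remains non-constant, ranging over $(1/2,1]$: $d(1)=1$, and the limit as $\alpha\to0^+$ is $1/2$ by an upper bound via pressure at $\{a_i\ge N\}$-type sets together with Good's theorem.

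\textbf{Main obstacle.} The hardest step is the uniform pressure-continuity estimate at rational $\alpha$ from the side not covered by a constancy interval. I would first try to realize $E(\alpha)$ as the limit set of a finite-state graph directed Markov system with countable alphabet, as in \cite{MU03}. Then $\beta$ near $\alpha$ modifies only edges of depth $\ge m$, and perturbation of the transfer operator's spectral radius gives the required continuity.
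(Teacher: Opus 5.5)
Your irrational case is circular. Part 3) of Theorem~\ref{wsgs} is not an independent input: the paper proves Theorem~\ref{lx} first and deduces Theorem~\ref{wsgs} from it. Moreover, $\alpha_{2j}\uparrow\alpha$, $\alpha_{2j+1}\downarrow\alpha$ and $d$ is monotone, so part 3) is literally equivalent to two-sided continuity of $d$ at irrational $\alpha$. You have therefore assumed exactly what must be proved there. This is the hard case: at irrational $\alpha$ there is no a priori Bowen formula for $d(\alpha)$ and no finite Markov structure for $E(\alpha)$.

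Your rational-case mechanism does not transfer, because it perturbs around a \emph{fixed} rational and all its constants depend on that rational. These include the number of Markov states, and the constant $C$ in an upper bound $Q_\ell(t,\alpha)\le Ce^{\ell(P(t,\alpha)+\delta)}$, which you need to sum over insertion patterns of the new blocks. Submultiplicativity of $Q_\ell$ only gives $Q_\ell\ge e^{\ell P}$. At an irrational point you must compare $P(t,\alpha_{2j})$ with $P(t,\alpha_{2j+1})$ for two \emph{moving} parameters, whose Markov partitions (generated by the orbits of the convergents) grow with $j$. Nothing in your sketch makes those constants uniform in $j$, so the small weight $\le F_{2j+1}^{-2t}$ of the new blocks cannot be exploited.

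The missing idea is a framework that is uniform in the hole. The paper fixes $t\in(1/2,1)$ and works in a single space: $BV$ with weak norm $|f|_t=\int|f|\,d\nu_t$, where $\nu_t$ is an atomless, full-support eigenmeasure of $\mathcal{L}_t^*$. It proves a Lasota--Yorke inequality for $\mathcal{L}_{t,\alpha}=\mathcal{L}_t(\ind_{[0,\alpha]}\,\cdot)$ uniformly in $\alpha$. It then observes that $|(\mathcal{L}_{t,\alpha}-\mathcal{L}_{t,\beta})f|_t\le\lambda_t\nu_t((\alpha,\beta])\|f\|_t$, which tends to $0$ whatever the arithmetic nature of $\alpha$. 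Keller--Liverani then yields upper semicontinuity and persistence of the spectrum outside a uniform essential radius. At rational $\beta$ one also has $\rho(\mathcal{L}_{t,\beta})=e^{P(t,\beta)}$ and the slope bound $-2\log\varphi$. So a jump of $d$ at any $\alpha_0\in(0,1]$ would give rationals $\beta_j\uparrow\alpha_0$ with $\rho(\mathcal{L}_{t,\beta_j})\le\varphi^{-2\varepsilon}$ and $\gamma_j\downarrow\alpha_0$ with $\rho(\mathcal{L}_{t,\gamma_j})\ge\varphi^{2\varepsilon}$. This contradicts spectral stability at $\alpha_0$, and it handles rational, irrational and $\alpha_0=1$ at once.

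Even your rational case is incomplete as sketched:
\begin{itemize}
\item It only controls $\mathcal D_n(\beta)\setminus\mathcal D_n(\alpha)$ for $\beta>\alpha$. When the non-constant side is the left one (an odd-length expansion ending in a digit $\ge2$, and also $\alpha=1$), you need the lower bound $\liminf_{\beta\uparrow\alpha}P(t,\beta)\ge P(t,\alpha)$. That requires a different argument, e.g.\ via finite sub-alphabets.
\item At rational $\alpha$ the new blocks are $\alpha$'s full expansion followed by a large digit. Their smallness comes from $\sum_{s\ge j}s^{-2t}$, not from long prefixes.
\end{itemize}
Your arguments for the jump at $0$ and for the devil's staircase property are fine.
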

We now give a generalization of Theorem \ref{lx}, whose proof will be omitted since it is similar to the proof of Theorem \ref{lx}.
Let $1=b_0>b_1>b_2>\cdots\longrightarrow 0$. Let $I_n=(b_n,b_{n-1})$. Suppose that each inverse branch $\phi_n:[0,1]\longrightarrow[b_n,b_{n-1}]$ is a $C^2$ monotonic diffeomorphism, with the direction of monotonicity allowed to depend on $n$. Define $T=\phi_n^{-1}$ on $I_n$, and set $T=0$ at all endpoints of the partition. For a finite word $w=(i_1,\ldots,i_m)$, write $\phi_w=\phi_{i_1}\circ\cdots\circ\phi_{i_m}$.
Suppose there exist $C>0$, $0<r<1$, and $K<\infty$ such that
\begin{align}\label{eq:contraction}
 \|\phi_w'\|_\infty&\le Cr^{|w|}
\end{align}
and
\begin{align}\label{eq:distortion}
  \sup_n\left\|\frac{\phi_n''}{\phi_n'}\right\|_\infty&\le K.
\end{align}
Condition \eqref{eq:contraction} is called uniform contraction under iteration, while \eqref{eq:distortion} is called a uniform distortion control.
Let $\ell_n=b_{n-1}-b_n$ and define the critical exponent
\begin{equation}\label{eq:theta}
 \theta=
 \inf\left\{s>0:\sum_n\|\phi_n'\|_\infty^s<\infty\right\}
 =\inf\left\{s>0:\sum_n\ell_n^s<\infty\right\}\in[0,1].
\end{equation}
The two expressions coincide because the distortion control gives
$\|\phi_n'\|_\infty\asymp\ell_n$, with comparison constants independent of $n$.
\begin{theorem}\label{thm:general}
Under the above conditions, the function $\alpha\mapsto \dim_HE(\alpha)$ is continuous on $(0,1]$, and
\[
 \dim_HE(\alpha)\ge\theta\quad(\alpha>0),
 \qquad \dim_HE(0)=0,
 \qquad \lim_{\alpha\downarrow0}\dim_HE(\alpha)=\theta.
\]
\end{theorem}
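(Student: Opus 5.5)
We follow the strategy of Theorem \ref{lx}, replacing Gauss-map specific estimates by the abstract conditions \eqref{eq:contraction} and \eqref{eq:distortion}. The argument has four parts: the case $\alpha=0$, the lower bound $\dim_H E(\alpha)\ge\theta$, continuity on $(0,1]$, and the limit as $\alpha\downarrow 0$.

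\textbf{The case $\alpha=0$.} Here $E(0)$ consists of points whose whole orbit stays in $\{0\}$. A point in $E(0)$ is either $0$ or has $T(x)=0$, and $T(x)=0$ forces $x$ to be a partition endpoint. So $E(0)\subset\{b_n\}\cup\{0\}$, which is countable, and $\dim_H E(0)=0$.

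\textbf{The lower bound $\dim_H E(\alpha)\ge\theta$ for fixed $\alpha>0$.} Choose $N$ with $b_{N-1}<\alpha$. Every point of $\bigcup_{n\ge N}I_n$ then lies below $\alpha$. Consider the infinite conformal IFS $\{\phi_n\}_{n\ge N}$ restricted to the invariant interval $[0,b_{N-1}]$.
\begin{itemize}
\item Since $\phi_n([0,1])=[b_n,b_{n-1}]\subset[0,\alpha]$ and $\phi_n$ maps $[0,b_{N-1}]$ into itself, the limit set $J_N$ of this system is contained in $E(\alpha)$ (up to countably many endpoints).
\item By \eqref{eq:contraction} and the bounded distortion \eqref{eq:distortion}, this is a conformal IFS in the sense of Mauldin--Urba\'nski, whose Hausdorff dimension is given by Bowen's formula, so $\dim_H J_N=\inf\{t:P_N(t)\le0\}$.
\item Distortion gives $\|\phi_n'\|\asymp\ell_n$, so $\sum_{n\ge N}\ell_n^t=\infty$ for every $t<\theta$. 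Hence $P_N(t)=\infty$ for $t<\theta$, and $\dim_H J_N\ge\theta$ by \cite{MU03} (the finiteness parameter satisfies $\dim\ge\theta$).
\end{itemize}
This gives $\dim_H E(\alpha)\ge\theta$.

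\textbf{The limit $\lim_{\alpha\downarrow0}\dim_H E(\alpha)=\theta$.} The lower bound already gives $\liminf\ge\theta$, so it remains to show the upper bound. For $x\in E(\alpha)$ with $\alpha<b_{N-1}$, every digit of $x$ is at least $N$. Hence $E(\alpha)\subset J_N\cup(\text{countable})$, and it suffices to show $\dim_H J_N\to\theta$. Fix $t>\theta$.
\begin{itemize}
\item Bounded distortion gives $\sum_{|w|=k}\|\phi_w'\|^t\le \bigl(C'\sum_{n\ge N}\ell_n^t\bigr)^k$.
\item The tail $\sum_{n\ge N}\ell_n^t$ tends to $0$ as $N\to\infty$, so for $N$ large we have $C'\sum_{n\ge N}\ell_n^t<1$.
\item Then $P_N(t)<0$, and natural covers by cylinders give $\dim_H J_N\le t$.
\end{itemize}
Letting $t\downarrow\theta$ gives $\limsup\le\theta$, hence the limit is $\theta$.

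\textbf{Continuity on $(0,1]$.} This is the main obstacle, and we copy the structure of Theorems \ref{jh}, \ref{wsgs} and \ref{lx}.
\begin{itemize}
\item \emph{Reduction to an increasing union.} First we show that $\dim_H E(\alpha)$ equals the supremum over finite-alphabet subsystems avoiding $(\alpha,1]$. Since $t\mapsto\sum_n\ell_n^t$ is finite beyond $\theta$ and $\dim\ge\theta$, the system is hereditarily regular enough for the MU approximation by finite subsystems.
\item \emph{Left continuity at $\alpha>0$.} We use $E(\alpha^-)=\bigcup_{\beta<\alpha}E(\beta)$ up to the orbit hitting $\alpha$ exactly. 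Those exceptional points form a set of codings whose dimension is controlled by pressure.
\item \emph{Right continuity.} Points of $E(\beta)\setminus E(\alpha)$ for $\beta\downarrow\alpha$ must visit $(\alpha,\beta]$. By the Markov-type coding, their cylinders shrink geometrically by \eqref{eq:contraction}. Thus the pressure $P(t,\beta)$ converges to $P(t,\alpha)$.
\item \emph{Uniform convergence of pressure.} This is the delicate step, because there are infinitely many branches. We handle it by truncating digits $\ge M$: their contribution $\sum_{n\ge M}\ell_n^t$ is uniformly small in $\alpha$ for $t$ in a compact subset of $(\theta,1]$. On the remaining finitely many branches, Raith's bounded-operator argument \cite{R94} applies.
\end{itemize}
The strict inequality $\dim_H E(\alpha)\ge\theta$, together with the zero of pressure lying in the region where the tail is controlled, ensures that the truncation error does not affect the zero. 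This is exactly where positivity of $\alpha$ is used.
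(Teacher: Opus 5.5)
Your arguments for $\dim_HE(0)=0$ and for $\dim_HE(\alpha)\ge\theta$ are correct. The latter goes via the full subsystem $\{\phi_n\}_{n\ge N}$ and the Mauldin--Urba\'nski inequality $h\ge\theta$. Your argument for $\lim_{\alpha\downarrow0}\dim_HE(\alpha)=\theta$, via $E(\alpha)\subset J_N\cup\{\text{countable}\}$ and the tail $\sum_{n\ge N}\ell_n^t$, is also correct. The one point to add is that bounded distortion for compositions, which you need in order to call $\{\phi_n\}_{n\ge N}$ conformal, comes from \eqref{eq:distortion} after summing along orbits using \eqref{eq:contraction}.

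The continuity on $(0,1]$, however, is not proved. This is the real content of the theorem and the only place where the infinitely many branches cause trouble. Concretely:
(i) $E(\alpha)\setminus\bigcup_{\beta<\alpha}E(\beta)=\{x\in E(\alpha):\sup_kT^kx=\alpha\}$ contains every orbit that approaches $\alpha$ from below without reaching it. This set is in general uncountable, even of positive dimension. So left continuity is not a countable-stability fact, and ``its dimension is controlled by pressure'' is exactly the statement that needs proof.
(ii) For right continuity, the shortness of $(\alpha,\beta]$ and the geometric shrinking of cylinders do not imply $P(t,\beta)\to P(t,\alpha)$, since orbits may enter $(\alpha,\beta]$ infinitely often. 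This is precisely the small-hole perturbation problem. You also never show that $P(t,\beta)$ computes $\dim_HE(\beta)$ when $E(\beta)$ is not of finite type.
(iii) Raith's theorem gives continuity in $\alpha$ of each truncated dimension function $d_M$ separately. To transfer this to $d$ you need $d_M\to d$ uniformly near $\alpha_0$. A uniform bound $\sum_{n>M}\ell_n^t$ on the norm of the tail operator does not give this, because $\rho(A+B)\le\rho(A)+\|B\|$ fails in general; one needs resolvent bounds, i.e.\ a spectral gap, uniform in $\alpha$.
(iv) Your closing sentence relies on a strict inequality $\dim_HE(\alpha)>\theta$, which you did not prove and which is false in general. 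Take affine branches with $\ell_n=c\,n^{-2}\log^{-4}(n+1)$. Then $\theta=1/2$ but $\sum_n\ell_n^{1/2}<\infty$, so $\sum_{n\ge N}\ell_n^{1/2}<1$ for large $N$. Hence $\dim_HE(\alpha)=\theta$ for all small $\alpha>0$.

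What is missing is a uniform spectral framework, which is what the paper uses: the proof of Theorem \ref{lx}, transplanted to this setting.
\begin{itemize}
\item Fix $\alpha_0\in(0,1]$ and suppose $d_-=\sup_{\beta<\alpha_0}d(\beta)<d_+=\inf_{\beta>\alpha_0}d(\beta)$. Choose $t\in(d_-,d_+)$. Then $t>\theta$ automatically, $\Lop_t$ is bounded, and no strict lower bound on the dimension is needed.
\item Take an atomless, full-support eigenmeasure $\nu_t$ of $\Lop_t^*$.
\item Prove a Lasota--Yorke inequality on $BV$ uniform in $\alpha$. Here \eqref{eq:contraction} and \eqref{eq:distortion} control finitely many branches, and the tail $\sum_{n>M}\ell_n^t$ absorbs the rest.
\item Observe that $\Lop_{t,\alpha}-\Lop_{t,\beta}$ has strong-to-weak norm at most $\lambda_t\nu_t((\alpha,\beta])\to0$. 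Keller--Liverani then gives spectral stability at $\alpha_0$.
\item Identify pressure with $\log\rho(\Lop_{t,\alpha})$, and its zero with $d(\alpha)$, only on the dense set of Markov parameters (those whose orbit hits a partition endpoint). By monotonicity of $d$ this is enough.
\item Such parameters below $\alpha_0$ have $\rho(\Lop_{t,\beta})\le r^{t-d_-}<1$, and those above have $\rho(\Lop_{t,\gamma})\ge r^{-(d_+-t)}>1$. This contradicts spectral stability at $\alpha_0$.
\end{itemize}
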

\begin{remark}
  We will give an example in Section $5$ to show that piecewise monotonicity together with surjectivity does not guarantee Theorem \ref{thm:general}.
\end{remark}

\subsection{Points sharing identical representation under different expansions}
Any real number $x\in\mathbb{R}$ can have different expansions, such as $\beta$-expansion, regular continued fraction expansion, L\"uroth expansion and so on. A compelling question in number theory is to determine the cardinality or Lebesgue measure of numbers sharing identical expansions under two distinct representation systems. In 1970s, Galambos \cite{G71,G76} proved that the set of $x$ whose Engel expansion and Sylvester expansion coincide has zero Lebesgue measure. In 2003, Wu \cite{W03} proved that such a set has Hausdorff dimension $\frac{1}{2}$. %Subsequently, Zhang \cite{Z15} investigated exceptional sets associated with Sylvester expansions, establishing full Hausdorff dimension results under certain growth conditions and identifying divisibility conditions under which the Engel and Sylvester expansions coincide.
\par
We investigate the numbers whose $\alpha$-continued fraction expansion coincides with their regular continued fraction expansion. The $\alpha$-continued fraction expansion is a parameterized generalization of the classical continued fraction. 
\par
Let $\alpha\in (0,1]$. Define the map $T_{\alpha}: [\alpha-1,\alpha]\rightarrow [\alpha-1,\alpha]$ by
\[T_{\alpha}(x)=
\begin{cases}
  \left|\frac{1}{x}\right|-\left\lfloor \left|\frac{1}{x}\right|+1-\alpha\right\rfloor & ,\ x\neq 0, \\
  0 &,\ x=0.
\end{cases}
\]
For any $x\in\mathbb{R}$, set $b_0\in \mathbb{Z}$, such that $x-b_0\in [\alpha-1,\alpha)$. Let $n\ge 1$ be such that $T_{\alpha}^{n-1}(x-b_0)\neq 0$. Then, we define $\varepsilon_n(x)=\text{sgn}\left(T^{n-1}_{\alpha}(x-b_0)\right)$ and
\begin{align}\label{fh}
  b_n(x)=\left\lfloor \left|\frac{1}{T^{n-1}_{\alpha}(x-b_0)}\right|+1-\alpha\right\rfloor.
\end{align}
If there exists $n_0\ge 1$ such that $T^{n_0}_{\alpha}(x-b_0)=0$, then we define $\varepsilon_n(x)=0,\ b_n(x)=0,\ \text{for all}\ n\ge n_0+1$. By definition, we have
\begin{align}\label{a}
  x=b_0(x)+\frac{\varepsilon_1(x)}{b_1(x)+T_{\alpha}(x-b_0)}.
\end{align}
Iterating \eqref{a} gives
\begin{align}\label{b}
  x=b_0(x)+\cfrac{\varepsilon_1(x)}{b_1(x)+\cfrac{\varepsilon_2(x)}{b_2(x)+\ddots}}.
\end{align}
We call \eqref{b} the \textit{$\alpha$-continued fraction} of $x$ and denote it as $[b_0;\varepsilon_1b_1,\varepsilon_2b_2,\cdots]_{\alpha}$. When $\alpha=1$, the expansion \eqref{b} is the classical regular continued fraction. 
\begin{definition}
  Let $x\in \mathbb{R}\setminus \mathbb{Q}$ have the regular continued fraction of $x$ be $[a_0; a_1, a_2,\cdots]$. We say that $x$ has the same regular continued fraction and $\alpha$-continued fraction if $a_n=b_n,\ \text{for all}\  n\ge 0$ and $\varepsilon_n=1,\ \text{for all}\ n\ge 1$. 
\end{definition}
Denote 
  \[S(\alpha)=\left\{x\in \mathbb{R}\setminus \mathbb{Q}: x\ \text{has the same regular continued fraction and $\alpha$-continued fraction} \right\}.\]
For any $A\subset [0,1]$, denote the interior of $A$ by $A^{\circ}$.
The following theorem shows that the set-valued function $\alpha\mapsto S(\alpha)$ is locally constant almost everywhere.  
\begin{theorem}\label{sejh}
  The union of all constant intervals of the function $\alpha\mapsto S(\alpha)$ is
 \[\bigcup_{n=1}^{+\infty}\left(C_1(n)\right)^{\circ}\cup \left(C_2(n)\right)^{\circ}\cup \left(C_3(n)\right)^{\circ},\] 
 where the definition of $C_i(n)$ are the same as that in Theorem \ref{jh}.   
\end{theorem}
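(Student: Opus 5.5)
The plan is to reduce Theorem \ref{sejh} to Theorem \ref{jh} by expressing $S(\alpha)$ through the survivor set $E(\alpha)$ of the Gauss map. Since $b_0$ and $a_0$ are both determined by an integer shift, we may work with $x\in(0,1)\setminus\mathbb{Q}$ and then translate by integers. The key observation is as follows. For irrational $y\in(0,1)$, the regular digit is $a=\lfloor 1/y\rfloor$ and $T(y)=1/y-a\in(0,1)$. The $\alpha$-digit with positive sign is $b=\lfloor 1/y+1-\alpha\rfloor$ and $T_\alpha(y)=1/y-b$. These agree exactly when $T(y)<\alpha$; otherwise $b=a+1$ and $T_\alpha(y)=T(y)-1<0$, which gives the sign $-1$. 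We also need the current point to lie in $[\alpha-1,\alpha)$ with positive sign, i.e.\ in $(0,\alpha)$. By induction on $n$, the orbits then coincide, $T_\alpha^n(x-b_0)=T^n(x-a_0)$, as long as all the previous iterates lie in $(0,\alpha)$. This gives the first step:
\[
S(\alpha)=\left\{x\in\mathbb{R}\setminus\mathbb{Q}:\ T^k(x-\lfloor x\rfloor)<\alpha,\ \forall k\ge 0\right\}=\left(\mathbb{Z}+\widetilde E(\alpha)\right)\setminus\mathbb{Q},
\]
where $\widetilde E(\alpha)$ is the strict-inequality survivor set. The boundary case $x-\lfloor x\rfloor\in[\alpha-1,0)$ does not occur, because the representative in $[0,1)$ differs from the one in $[\alpha-1,\alpha)$ only when $x-\lfloor x\rfloor\ge\alpha$, and then $b_0=a_0+1$.

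The second step compares $\widetilde E(\alpha)$ with $E(\alpha)$. We have $\widetilde E(\alpha)\subset E(\alpha)$, and the difference consists of irrational points whose orbit hits $\alpha$ exactly. Using the right continuity of the constant intervals, we get $\widetilde E(\alpha)=\bigcup_{\beta<\alpha}E(\beta)$ up to this boundary set, and $E(\alpha)=\bigcap_{\beta>\alpha}E(\beta)$. Consequently, $\alpha\mapsto S(\alpha)$ is constant on an open interval $J$ if and only if $\alpha\mapsto E(\alpha)$ restricted to irrationals is constant on $J$, with the possible exception of the effect at the endpoints. To make this rigorous, I would show the following: if $E$ is constant on $[u,v)$ from Theorem \ref{jh}, then no irrational point of $E(\alpha)$ for $\alpha\in(u,v)$ has an orbit hitting $\alpha$. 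Indeed, such a point would have an orbit containing $\alpha$, and then $\alpha\in E(\alpha)$ would have an orbit staying $\le\alpha$. That would put $\alpha\in\mathcal E$, which is the bifurcation set by Theorem \ref{bifurcation set}, a contradiction for interior points. Hence $S$ is constant on each $(C_i(n))^\circ$.

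The third step is the converse, and it is where the left endpoints of the intervals in $C_i(n)$ must be removed. At a left endpoint $u$, for instance $u=[a_1,\dots,a_{2n},a_1]$, the point $u$ is rational, and the irrational points in $E(u)$ whose orbit equals $u$ at some time still belong to $E$ but not to $S$. One must check that for every $\beta>u$, $S(\beta)$ contains such points while $S(u)$ does not; for example, take preimages of $u$ with irrational tails, or use the irrational points $[a_1,\dots,a_{2n},a_1,\ldots]$ near $u$. Thus $S$ is not constant on any neighbourhood of $u$. Finally, a maximal open interval of constancy for $S$ must lie inside some constancy interval of $E$, since otherwise it would contain a bifurcation point of $\mathcal E$, where $E(\beta)$ genuinely changes. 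By Theorem \ref{bifurcation set}, $E(\beta)$ gains points near $\beta$ with orbits accumulating strictly below, so these gained points include irrational points in $S$. I expect this last step, which shows that bifurcation of $E$ forces bifurcation of $S$ (the new points need not be rational or boundary points), to be the main obstacle. I would handle it by taking $\beta\in\mathcal E$ irrational (or approximating) and exhibiting irrational points with orbit sup equal to $\beta$ attained only in the limit, using the characterization in Theorem \ref{jh}.
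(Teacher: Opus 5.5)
Your Steps 1 and 2 are correct and amount to the paper's proof. The paper identifies $S(\alpha)$ with the strict-inequality survivor set (Proposition~\ref{rcf}). It then uses $E(\alpha-\delta)\setminus\mathbb{Q}\subset S(\alpha)\cap[0,1]\subset E(\alpha)\setminus\mathbb{Q}$ together with $E(\alpha-\delta)=E(\alpha)$ from Theorem~\ref{jh}. Your justification is an equally valid variant: an irrational orbit in $E(\alpha)$ hitting $\alpha$ would force $\alpha\in\mathcal{E}$. It actually gives more, namely that $S$ is locally constant at every point of $[0,1]\setminus\mathcal{E}$. The paper's proof stops at this inclusion and never attempts the converse.

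Your Step 3 is where the proposal breaks, and it cannot be repaired.

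\emph{The endpoint argument is vacuous.} The left endpoints $u$ are rational and $T$ maps irrationals to irrationals, so no irrational point has an orbit equal to $u$. Hence $S(u)\cap[0,1]=E(u)\setminus\mathbb{Q}$ exactly, with no boundary discrepancy to exploit.

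\emph{The claim itself fails.} For $u=[a_1,\dots,a_{2n},a_1]$ one has $T^{2n}u=1/a_1>u$, so $u\notin\mathcal{E}$. Since $\mathcal{E}$ is closed, $E$ is constant on a two-sided neighbourhood of $u$, and your own Step 2 then makes $S$ constant there too. Concretely, every point of $(1/3,2/5)$ has expansion $[2,1,\dots]$, so $T$ sends it above $1/2$ and $(1/3,2/5)\cap\mathcal{E}=\emptyset$. One checks directly that $S(\beta)\cap(0,1)$ is the set of irrationals with all partial quotients $\ge 3$ for every $\beta\in[1/3,2/5]$. Yet $3/8=[2,1,2]$ is only the left endpoint of $[3/8,2/5)\subset C_1(1)$. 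Moreover, the points $[2,1,2,M,\overline{3}]\to 3/8$ lie in no interval of $C$, so $3/8\notin\bigcup_n (C_i(n))^{\circ}$.

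\emph{The final paragraph has the same defect.} It presupposes that $C$ exhausts the local-constancy set $[0,1]\setminus\mathcal{E}$ of $E$. The paper's argument for Theorem~\ref{jh} only shows that $C$ is dense and consists of constancy intervals. In fact $[2,1,\overline{2}]\in(1/3,2/5)$ lies in no interval of $C$.

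So the ``exactly'' direction you aim for in Step 3 is not provable as stated. What Steps 1--2, and the paper, actually establish is only that $S$ is constant on each $(C_i(n))^{\circ}$ (more generally, locally constant off $\mathcal{E}$).
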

The following theorem shows that the Hausdorff dimension of $S(\alpha)$ is the same as that of $E(\alpha)$ defined in \eqref{edy}.
\begin{theorem}\label{sews}
  For all $\alpha\in (0,1)$, we have $\dim_H(S(\alpha))=\dim_H(E(\alpha))$.
\end{theorem}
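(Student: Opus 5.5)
The plan is to reduce $S(\alpha)$ directly to a countable union of translates of $E(\alpha)$, up to a countable set of exceptional points. Then $\dim_H(S(\alpha))=\dim_H(E(\alpha))$ follows from countable stability and translation invariance of Hausdorff dimension. No pressure or transfer operator arguments should be needed.

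\textbf{Step 1: a one-step comparison.} Fix $y\in(0,\alpha)$ irrational. Let $T$ denote the Gauss map and write $a=\lfloor 1/y\rfloor$. Then $1/y\in[a,a+1)$, and
\[
\lfloor 1/y+1-\alpha\rfloor=a \iff \{1/y\}<\alpha \iff T(y)<\alpha .
\]
In that case $T_\alpha(y)=T(y)$, the $\alpha$-digit equals $a$, and the sign equals $+1$. If instead $\{1/y\}\ge\alpha$, the $\alpha$-digit is $a+1\neq a$, and $T_\alpha(y)=T(y)-1<0$, so the next sign is $-1$.

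\textbf{Step 2: characterization of $S(\alpha)$.} Let $x\notin\mathbb{Q}$ have regular expansion $[a_0;a_1,\dots]$, and put $y=x-a_0=\{x\}$. First, $b_0(x)=a_0$ holds iff $\{x\}\in[\alpha-1,\alpha)$, that is, iff $\{x\}<\alpha$. Now induct on $n$ using Step 1, with $T_\alpha^n(y)=T^n(y)$ as the inductive hypothesis. This shows that $b_n=a_n$ and $\varepsilon_n=1$ for all $n$ if and only if $T^k(y)<\alpha$ for every $k\ge 0$. At the first $k$ where this fails, Step 1 shows that either the next digit or the sign after it differs. Hence
\[
S(\alpha)=\bigcup_{m\in\mathbb{Z}}\bigl(m+E'(\alpha)\bigr),\qquad E'(\alpha):=\{y\in[0,1]\setminus\mathbb{Q}: T^k(y)<\alpha\ \forall k\ge0\}.
\]

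\textbf{Step 3: comparing $E'(\alpha)$ with $E(\alpha)$.} Clearly $E'(\alpha)\subset E(\alpha)$. Any point of $E(\alpha)\setminus E'(\alpha)$ is either rational or satisfies $T^k(y)=\alpha$ for some $k$.
\begin{itemize}
\item If $\alpha\in\mathbb{Q}$, the second option forces $y\in\mathbb{Q}$, because the Gauss map preserves irrationality backwards as well as forwards.
\item If $\alpha\notin\mathbb{Q}$, then $\bigcup_k T^{-k}(\{\alpha\})$ is countable, since each point has countably many preimages.
\end{itemize}
In both cases $E(\alpha)\setminus E'(\alpha)$ is countable. Countable sets have Hausdorff dimension $0$, so $\dim_H(E'(\alpha))=\dim_H(E(\alpha))$. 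Translation invariance and countable stability then give $\dim_H(S(\alpha))=\dim_H(E'(\alpha))$.

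\textbf{Main obstacle.} The main care is needed in Step 2. One must track the boundary conventions: the half-open interval $[\alpha-1,\alpha)$ for $b_0$, the sign $\varepsilon_n$, and the strict versus non-strict inequality. These are what make $E'(\alpha)$ differ from $E(\alpha)$. Once the equivalence ``$\alpha$-digit $=$ regular digit with positive sign $\iff$ $T(y)<\alpha$'' is verified, the rest is routine.
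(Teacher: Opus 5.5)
Your proposal is correct and follows essentially the same route as the paper. Your Steps 1--2 reprove the characterization $S(\alpha)=\{x\notin\mathbb{Q}: T^k(x-a_0)<\alpha\ \forall k\ge0\}$, which is the paper's Proposition~\ref{rcf}. Step 3 then matches the paper's argument: $E(\alpha)$ and $S(\alpha)\cap[0,1]$ differ only by rationals and preimages of $\alpha$, so translation invariance and countable stability finish the proof.
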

\section{Preliminaries}
In this section, we first give a brief introduction to regular continued fractions and graph-directed Markov systems. Then, we precent some lemmas which will be used in the proofs of our theorems.
\subsection{Regular continued fraction}
Let $\alpha=1$ be in the definition of $T_{\alpha}$ in Section 1.2. Then, we get the Gauss map $T=T_1:[0,1]\rightarrow [0,1]$,
\[T(x)=
\begin{cases}
  \frac{1}{x}-\left\lfloor \frac{1}{x}\right\rfloor & ,\ x\neq 0, \\
  0 &,\ x=0.
\end{cases}
\]
For any $x\in \mathbb{R}$, set $a_0\in\mathbb{Z}$, such that $x-a_0\in [0,1)$. Note that if $x\notin \mathbb{Q}$, then $T^{n}(x-a_0)\neq 0$ for all $n\ge 1$, and if $x\in \mathbb{Q}$, then there exists some $n_0\ge 0$ such that $T^{n}(x-a_0)=0$ for all $n\ge n_0$. Since $\alpha=1$, we have $\varepsilon_n(x)=1$ if $T^n(x-a_0)\neq 0$. Hence, by \eqref{fh}, we have 
\[a_n(x)=\left\lfloor \frac{1}{T^{n-1}(x-a_0)}\right\rfloor.\]
Then, by \eqref{b}, for any irrational number $x$, we have $x=[a_0(x);a_1(x),a_2(x),\cdots]$ which is the unique regular continued fraction expansion of $x$. For any rational number $x\in (0,1)$, we have $x=[a_0(x);a_1(x),\cdots,a_{n_0}(x)]=[a_0(x);a_1(x),\cdots,a_{n_0}(x)-1,1]$ which are the only two regular continued fraction expansions of $x$.
\par
We will provide a symbolic description of the continued fraction expansions in terms of symbolic strings. Let us start with some notation. If $S = (s_1,\cdots,s_n)$ is a finite string of positive integers and $x\in [0,1]$ is a real number, we denote
$[0;S]=[0;s_1,\cdots,s_n]$,
and
\[[0;S,x]=\cfrac{1}{s_1\cfrac{1}{\ddots +\cfrac{1}{s_n+x}}}.\] 
We will also introduce a total ordering on the space of finite strings of given length. 
%Given two distinct finite strings $S=(s_1,\cdots,s_n)$ and $T=(t_1,\cdots,t_n)$ of equal length $n$, let $\ell := \min\{1\le i\le n-1:s_i \neq t_i\}$. When $\ell$ is even, we define $S<T$ if $s_{\ell}<t_{\ell}$. When $\ell$ is odd, we define $S<T$ if $s_{\ell}>t_{\ell}$. It can be checked that the same definition gives a total ordering on the space of infinite strings. Further, we have $S<T$ if and only if $[0;S]<[0;T]$. Hence, this ordering can be obtained by pulling back the order structure on $\mathbb{R}$, via identification of a string with the value of the corresponding regular continued fraction expansion.
The following lemma shows how to compare the size of two real numbers $x$ and $y$ by their regular continued fraction expansions.
\begin{lemma}
  Let $x=[x_1,x_2,\cdots]$ and $y=[y_1,y_2,\cdots]$ be two different real numbers in $(0,1]$. 
  \begin{enumerate}[1)]
    \item Suppose $x,y\notin \mathbb{Q}$ and $x\neq y$. Define $\ell:=\min\{i\ge 1:x_i \neq y_i\}$. If $\ell$ is even, then $x>y$ if and only if $x_{\ell}>y_{\ell}$. If $\ell$ is odd, then $x>y$ if and only if $x_{\ell}<y_{\ell}$.
    \item Suppose $x=[x_1,\cdots,x_n]\in\mathbb{Q}$ and $y\notin\mathbb{Q}$ or $y=[y_1,\cdots,y_m]$ with $n\le m$. We distinguish two subcases.
    \begin{enumerate}
      \item Suppose $y_i=x_i$ for all $1\le i\le n$. Then $x>y$ if and only if $n$ is odd.
      \item Suppose there exists some $i$ such that $x_i\neq y_i$. Define $\ell:=\min\{1\le i\le n:x_i \neq y_i\}$. If $\ell$ is even, then $x>y$ if and only if $x_{\ell}>y_{\ell}$. If $\ell$ is odd, then $x>y$ if and only if $x_{\ell}<y_{\ell}$.
    \end{enumerate}
  \end{enumerate}
\end{lemma}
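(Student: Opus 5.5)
The plan is to reduce everything to one identity and then argue by induction on the first index where the two expansions differ. Write $x=[x_1,x_2,\dots]$ and consider the finite-prefix map
\[
g_{S}(t)=[0;S,t], \qquad S=(s_1,\dots,s_k), \quad t\in[0,1].
\]
Each map $t\mapsto \frac{1}{s+t}$ is strictly decreasing on $[0,1]$. Hence $g_S$ is a composition of $k$ strictly decreasing maps, so it is strictly increasing when $k$ is even and strictly decreasing when $k$ is odd. This parity rule is the only fact we will really use.

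For part 1), let $\ell$ be the first index with $x_\ell\neq y_\ell$, and set $S=(x_1,\dots,x_{\ell-1})$, which is common to both expansions. Then
\[
x=g_S\bigl([x_\ell,x_{\ell+1},\dots]\bigr), \qquad y=g_S\bigl([y_\ell,y_{\ell+1},\dots]\bigr).
\]
Put $u=[x_\ell,\dots]$ and $v=[y_\ell,\dots]$. Since $x,y$ are irrational, $u\in\bigl(\frac{1}{x_\ell+1},\frac{1}{x_\ell}\bigr)$ and similarly for $v$. These open intervals are disjoint for different partial quotients, so $x_\ell>y_\ell$ is equivalent to $u<v$. Now $S$ has length $\ell-1$.
\begin{itemize}
\item If $\ell$ is odd, $\ell-1$ is even, so $g_S$ is increasing and $x>y \iff u>v \iff x_\ell<y_\ell$.
\item If $\ell$ is even, $g_S$ is decreasing, giving $x>y\iff x_\ell>y_\ell$.
\end{itemize}

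Case 2(b) is handled the same way, with one extra point for the rational tails. Take $u=[x_\ell,\dots,x_n]$, which lies in the half-open interval $\bigl(\frac{1}{x_\ell+1},\frac{1}{x_\ell}\bigr]$; it equals $\frac{1}{x_\ell}$ only when $\ell=n$, and $v$ behaves likewise. If $x_\ell<y_\ell$, then $v\le \frac{1}{y_\ell}\le\frac{1}{x_\ell+1}<u$. Equality $v=\frac{1}{x_\ell+1}$ occurs only for a terminating $y$ written as $[x_\ell+1]$, or as $[x_\ell,1]$ if we allow that form. I will treat that degenerate case by assuming the standard convention that the expansions used are the canonical ones, or else simply note that strictness still holds whenever $x\ne y$. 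Once strict inequality between $u$ and $v$ is established, the same parity argument finishes 2(b).

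For case 2(a), $y$ extends $x$'s expansion. Write $y=g_S(t)$ with $S=(x_1,\dots,x_n)$ and $t=[y_{n+1},\dots]\in(0,1)$; here $t>0$ because $y\ne x$ forces $y$ to have more digits. Also $x=g_S(0)$. Since $t>0$:
\begin{itemize}
\item if $n$ is even, $g_S$ is increasing, so $y>x$;
\item if $n$ is odd, $g_S$ is decreasing, so $x>y$.
\end{itemize}
This gives 2(a).

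The main obstacle is the boundary bookkeeping for rational tails in 2(b), namely the possible coincidence $v=\frac{1}{x_\ell+1}$. The monotonicity parity argument itself is routine.
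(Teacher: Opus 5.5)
The paper states this lemma without proof, so there is nothing to compare your route to. Your parity argument for $g_S(t)=[0;S,t]$ is the natural tool, and parts 1) and 2(a) are correct as written. In 2(a) the tail $t$ can equal $1$, e.g.\ for $y=[x_1,\dots,x_n,1]$, but that is harmless since $g_S$ is strictly monotone on all of $[0,1]$.

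The gap is in 2(b). That is the only place where the hypothesis that $x$ and $y$ are different \emph{real numbers} is genuinely needed, and your proposal never actually uses it.

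First, your claim that $u=[x_\ell,\dots,x_n]$ lies in $(\frac{1}{x_\ell+1},\frac{1}{x_\ell}]$ is false for expansions ending in $1$. If $n=\ell+1$ and $x_n=1$, then $u=[x_\ell,1]=\frac{1}{x_\ell+1}$, so the last step of your chain $v\le\frac{1}{y_\ell}\le\frac{1}{x_\ell+1}<u$ fails there. Your remark that $v$ might be $[x_\ell,1]$ is off target: $v$ begins with $y_\ell>x_\ell$, and it is $u$ that attains the boundary value.

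Second, you never treat the case $x_\ell>y_\ell$, and that is where $u=v$ really can occur. The chain $u\le\frac{1}{x_\ell}\le\frac{1}{y_\ell+1}\le v$ is an equality throughout exactly when $\ell=n$, $y_n=x_n-1$, $m=n+1$ and $y_{n+1}=1$. That is, $y=[x_1,\dots,x_{n-1},x_n-1,1]$ is the other expansion of the same rational.

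Saying that ``strictness still holds whenever $x\ne y$'' asserts what must be proved. Falling back on canonical expansions changes the lemma, and the paper applies it to non-canonical values such as $[a_1,\dots,a_{2n},1]$ in Corollary~\ref{bb2}.

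The repair is short. Use the closed interval $u\in[\frac{1}{x_\ell+1},\frac{1}{x_\ell}]$, where $u=\frac{1}{x_\ell}$ iff $n=\ell$, and $u=\frac{1}{x_\ell+1}$ iff $(x_\ell,\dots,x_n)=(x_\ell,1)$. The same holds for $v$, which lies in the open interval when $y\notin\mathbb{Q}$.

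If $y_\ell>x_\ell$, then $v\le\frac{1}{y_\ell}\le\frac{1}{x_\ell+1}\le u$. Equality in the first step forces $m=\ell$, hence $n=\ell$ because $\ell\le n\le m$. Then $u=\frac{1}{x_\ell}>\frac{1}{x_\ell+1}\ge v$, so $v<u$ in all cases.

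If $x_\ell>y_\ell$, the chain above gives $u\le v$, with equality only for the second expansion of $x$. That is excluded by $x\ne y$, so $u<v$.

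Your parity argument with $|S|=\ell-1$ then finishes 2(b).
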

\subsection{Graph directed Markov systems}
Consider a $0$--$1$ incidence matrix $A:I\times I \rightarrow \{0, 1\}$ where $I$ is a countable alphabet.
Let
\[E_{A}^{\infty}= \{\omega\in I^{\infty}: A_{\omega_i\omega_{i+1}}=1,\ \forall\ i \ge 1\}\]
be the space of all $A$-admissible infinite sequences with terms in $I$, i.e., we are considering a directed graph with vertices being the elements of $I$ and a directed edge from $\omega_i$ to $\omega_j$ when $A_{\omega_i\omega_j} = 1$ but not otherwise. Let $E_{A}^n$ be the collection of all subwords of $E_{A}^{\infty}$ of length $n$, and let $E_{A}^*$ be the collection of all finite subwords of $E_{A}^{\infty}$. The incidence matrix $A$ is said to be \textit{finitely primitive} if there exists
a finite set $\Lambda\subset E_{A}^*$ consisting of words of the same length such that for all $\omega_i,\omega_j\in I$ there exists a path $\gamma\in \Lambda$ for which $\omega_i\gamma\omega_j\in E_{A}^{\infty}$. Here $\omega_i\gamma\omega_j$ is the concatenation of the three words.
\par
For any $t\ge 0$, let
\[P_n(t)=\frac{1}{n}\log \underset{\omega\in E_{A}^n}{\sum}q_{\omega}^{-2t},\]
where $q_\omega$ is defined by \eqref{def of q_w}.  
Let $E(A):=\{[\omega]:\omega\in E_A^{\infty}\}$.
The following two lemmas shows that the Hausdorff dimension of $E(A)$ can be described by using the function $t\mapsto P_n(t)$.
\begin{lemma}\cite[Proposition 4.2.8]{MU03}\label{ylhs}
  Suppose $A$ is a finitely primitive $0$-$1$ incidence matrix. Then, we have
  \begin{enumerate}[1)]
    \item For all $t\ge 0$, the limit $\lim\limits_{n\to +\infty}P_n(t)=P(t)\in [-\infty,+\infty]$,
    \item The function $t\mapsto P(t)$ is convex, continuous, and non-increasing on $[0,+\infty)$, strictly decreasing on $[\theta, +\infty)$ to negative infinity, where $\theta:=\inf \{t\ge 0: P(t)< +\infty\}$.
  \end{enumerate}
\end{lemma}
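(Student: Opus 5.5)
This lemma is quoted from \cite[Proposition 4.2.8]{MU03}, so the plan is to check that the Gauss-map setting with weights $q_\omega^{-2}$ fits the hypotheses there, and to reproduce the standard argument.

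\textbf{Step 1: existence of the limit.} I will show that $a_n(t):=\log\sum_{\omega\in E_A^n}q_\omega^{-2t}$ is subadditive up to a bounded error, and then apply Fekete's lemma.
\begin{itemize}
\item The continuant recursion gives $q_\omega q_\tau\le q_{\omega\tau}\le 2q_\omega q_\tau$.
\item Every word in $E_A^{m+n}$ splits as a word in $E_A^m$ followed by one in $E_A^n$. Hence
\[\sum_{E_A^{m+n}}q^{-2t}\le\Big(\sum_{E_A^m}q^{-2t}\Big)\Big(\sum_{E_A^n}q^{-2t}\Big),\]
so $a_{m+n}\le a_m+a_n$.
\item Fekete's lemma then gives $\lim a_n/n=\inf_n a_n/n\in[-\infty,+\infty]$. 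If some $a_n$ is $+\infty$, I use finite primitivity to see that all $a_n$ are $+\infty$, and the limit is trivially $+\infty$.
\end{itemize}

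\textbf{Step 2: a matching lower bound.} This is the step where finite primitivity is really used. Fix the finite set $\Lambda$ of connecting words, all of length $p$. For $\omega\in E_A^m$ and $\tau\in E_A^n$, choose $\gamma\in\Lambda$ with $\omega\gamma\tau$ admissible.
\begin{itemize}
\item Then $q_{\omega\gamma\tau}\le 4q_\omega q_\gamma q_\tau\le Mq_\omega q_\tau$, where $M:=4\max_{\gamma\in\Lambda}q_\gamma$.
\item The map $(\omega,\tau)\mapsto\omega\gamma\tau$ is at most $\#\Lambda$-to-one, since the split positions are fixed.
\item Therefore $a_{m+n+p}\ge a_m+a_n-c(t)$, with $c(t)=2t\log M+\log\#\Lambda$.
\end{itemize}
This almost-superadditivity is what I will use for convexity and for controlling finiteness.

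\textbf{Step 3: convexity, continuity and monotonicity.}
\begin{itemize}
\item Each $P_n$ is convex in $t$, by H\"older's inequality applied to a log-sum-exp of linear functions $-2t\log q_\omega$. Pointwise limits of convex functions are convex, so $P$ is convex.
\item $P$ is non-increasing because $q_\omega\ge1$.
\item Set $\theta=\inf\{t:P(t)<\infty\}$. By Step 2, $P(t)<\infty$ exactly when $\sum_{i}q_i^{-2t}<\infty$. So $P$ is finite on $(\theta,\infty)$, and possibly at $\theta$ itself.
\item A convex finite function is continuous on the open interval where it is finite. At $\theta$, I will get continuity (in the extended sense) from monotone convergence and lower semicontinuity of the limit.
\end{itemize}

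\textbf{Step 4: strict decrease to $-\infty$.} Every word of length $n$ has $q_\omega\ge F_{n}\ge c\,\phi^{n}$, where $F_n$ is the Fibonacci number and $\phi$ the golden ratio. Hence for $s>0$,
\[P(t+s)\le P(t)-2s\log\phi .\]
This gives strict decrease on $[\theta,\infty)$ and $P(t)\to-\infty$.

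The main obstacle I expect is behaviour at $\theta$, namely continuity when $P(\theta)$ is finite but $P$ is infinite to the left. I will handle this as in \cite{MU03}, via the monotone limit $\lim_{t\downarrow\theta}\sum q^{-2t}=\sum q^{-2\theta}$ applied to each finite-$n$ sum, combined with the uniform bounds of Steps 1–2.
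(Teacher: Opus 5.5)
The paper gives no argument of its own here; it only cites \cite[Proposition 4.2.8]{MU03}. Your Steps 1--4 are the standard Mauldin--Urba\'nski proof specialized to the weights $q_\omega^{-2t}$, and they are sound. One small simplification: in Step 2 the map $(\omega,\tau)\mapsto\omega\gamma\tau$ is injective, since $\omega$ and $\tau$ are recovered as prefix and suffix, so the $\log\#\Lambda$ term is unnecessary.

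The one real problem is your last paragraph. Consider the case you single out: $\theta>0$, $P(\theta)<\infty$, and $P\equiv+\infty$ on $[0,\theta)$. There $P$ is \emph{not} continuous at $\theta$ in the extended sense, so this obstacle cannot be overcome. The case does occur. Take the full shift on any $I\subset\mathbb N$ with $\sum_{i\in I}i^{-1}<\infty$ but $\sum_{i\in I}i^{-2t}=\infty$ for all $t<1/2$, for instance $I=\{\lfloor n\log^2 n\rfloor:n\ge 3\}$. Then $\theta=1/2$ and $P(1/2)\le\log\sum_{i\in I}i^{-1}<\infty$.

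What is true is continuity on $[\theta,\infty)$, i.e.\ right-continuity at $\theta$, and the lemma's ``continuous on $[0,+\infty)$'' has to be read that way. Your monotone-convergence idea proves exactly this, but only via Step 2. The representation $P=\inf_n a_n/n$ from Step 1 is an infimum of right-continuous functions, so it yields only upper semicontinuity, which monotonicity already gives.

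Instead put $b_n=a_{n-p}-c$. Step 2 makes $(b_n)_{n>p}$ superadditive, so $P=\sup_{n>p}b_n/n$. By monotone convergence, $a_k(t)\uparrow a_k(\theta)$ as $t\downarrow\theta$, hence $\liminf_{t\downarrow\theta}P(t)\ge b_n(\theta)/n$ for each $n$, i.e.\ $\liminf_{t\downarrow\theta}P(t)\ge P(\theta)$. Together with $P(t)\le P(\theta)$ for $t>\theta$, this gives right-continuity. The special case $P(t)\ge (a_1(t)-c(t))/(p+1)$ also shows $P(t)\to+\infty$ as $t\downarrow\theta$ when $P(\theta)=+\infty$.
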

\begin{lemma}\cite[Theorem 4.2.13]{MU03}\label{wsdl}
  Suppose $A$ is a finitely primitive $0$-$1$ incidence matrix. Then, we have 
  \[\dim_H(E(A))=\inf \{t\ge 0: P(t)\le 0\}=\min \{t\ge 0: P(t)\le 0\}.\]
\end{lemma}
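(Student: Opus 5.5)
The formula will be proved by showing $\dim_H(E(A))\le t$ for every $t$ with $P(t)<0$, and $\dim_H(E(A))\ge t$ for every $t$ with $P(t)>0$, and then using Lemma \ref{ylhs} to identify $\inf\{t:P(t)\le 0\}$ as a minimum.

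\textbf{Geometric input.} For $\omega\in E_A^n$, the cylinder
\[I_\omega=\{[\omega,x]:x\in[0,1]\}\]
is an interval of length $\frac{1}{q_\omega(q_\omega+q_{\omega^-})}$, where $\omega^-$ is $\omega$ with its last letter deleted. Since $q_{\omega^-}\le q_\omega$, this gives
\[\tfrac12 q_\omega^{-2}\le |I_\omega|\le q_\omega^{-2}.\]
We also have bounded distortion: $q_{\omega\tau}\asymp q_\omega q_\tau$, with constants at most $2$. Hence $q_\omega^{-2t}$ is, up to a uniform constant, the $t$-th power of the diameter of the cylinder.

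\textbf{Upper bound.} Fix $t$ with $P(t)<0$ and choose $\varepsilon>0$ with $P(t)+\varepsilon<0$. For each $n$, the cylinders $\{I_\omega:\omega\in E_A^n\}$ cover $E(A)$, and their diameters are at most $q_\omega^{-2}\le F_{n+1}^{-2}\to 0$, where $F_k$ denotes the Fibonacci numbers. By Lemma \ref{ylhs}, for large $n$,
\[\sum_{\omega\in E_A^n}|I_\omega|^t\le \sum_{\omega\in E_A^n} q_\omega^{-2t}\le e^{n(P(t)+\varepsilon)}\to 0 .\]
So $\mathcal H^t(E(A))=0$ and $\dim_H E(A)\le t$. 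Letting $t$ decrease to $\inf\{t:P(t)<0\}$ gives the bound. By the strict decrease of $P$ on $[\theta,\infty)$ in Lemma \ref{ylhs}, this infimum equals $\inf\{t:P(t)\le 0\}$. The only exception is when $P\le 0$ already at $t=\theta$ with $P(\theta)=0$, and there the same conclusion follows by continuity.

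\textbf{Lower bound.} Fix $t<\inf\{s:P(s)\le 0\}$, so that $P(t)>0$. For a finite subalphabet $F\subset I$, let $A_F$ be the restriction of $A$ to $F$ and $P_F$ its pressure. Finite primitivity provides a finite set $\Lambda$ of connecting words of a common length $p$. I will use it to prove two facts.
\begin{enumerate}[(i)]
\item $P_F(t)\to P(t)$ as $F\uparrow I$.
\item For $F$ containing the finitely many letters used by $\Lambda$, $A_F$ is still primitive.
\end{enumerate}
For (i), I would fix $n$ and a finite set of words in $E_A^n$ carrying most of the sum $\sum_{\omega\in E_A^n}q_\omega^{-2t}$. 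These words can be concatenated through words of $\Lambda$ into admissible words of length $k(n+p)$. By bounded distortion, this gives
\[P_F(t)\ge \frac{1}{n+p}\Big(\log\sum q_\omega^{-2t}-C\Big).\]
Letting first $F\uparrow I$ and then $n\to\infty$ yields $\liminf_F P_F(t)\ge P(t)$. The reverse inequality is trivial since $E_{A_F}^n\subset E_A^n$. This approximation step is the main obstacle. It requires care when $P(t)=+\infty$, where the same construction shows $P_F(t)\to+\infty$.

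With (i) in hand, pick $F$ with $P_F(t)>0$. For the finite primitive subshift $A_F$, I will use the Perron–Frobenius/Ruelle theorem for the transfer operator
\[\mathcal L_s g(x)=\sum_{i\in F} |\phi_i'(x)|^s g(\phi_i x),\qquad \phi_i(x)=\frac{1}{i+x},\]
restricted to admissible transitions. It gives a zero $h_F>t$ of $P_F$, together with a Gibbs measure $\mu$ on $E(A_F)$ satisfying
\[\mu(I_\omega)\asymp q_\omega^{-2h_F}\asymp |I_\omega|^{h_F}.\]
Because the letters lie in a finite set $F$, any interval $J$ meets only boundedly many cylinders of comparable size. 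Hence $\mu(J)\lesssim |J|^{h_F}$, and the mass distribution principle gives
\[\dim_H E(A)\ge \dim_H E(A_F)\ge h_F>t.\]

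\textbf{Conclusion.} Combining the two bounds gives $\dim_H(E(A))=\inf\{t\ge0:P(t)\le0\}$. The infimum is attained: the set $\{t:P(t)\le 0\}$ is an interval $[t_0,\infty)$ that is closed at $t_0$. This follows from the monotonicity and continuity of $P$ on $[\theta,\infty)$ in Lemma \ref{ylhs}, together with $P=+\infty$ on $[0,\theta)$.
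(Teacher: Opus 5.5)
Your proof is correct and is essentially the standard Mauldin--Urba\'nski argument, which the paper cites rather than reproduces. It uses natural cylinder covers for the upper bound; for the lower bound it approximates $P$ by the pressures $P_F$ of finite primitive subsystems (built from the connecting words in $\Lambda$), then applies Bowen's formula for finite alphabets via a Gibbs measure and the mass distribution principle. The attainment of the infimum also follows directly from your step (i): since $P=\sup_F P_F$ with each $P_F$ continuous, $P$ is lower semicontinuous and $\{t:P(t)\le 0\}$ is closed.
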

\section{Structure of $E(\alpha)$ and proofs of Theorems \ref{jh} and \ref{bifurcation set}}
In this section, we will use the methods of Carminati and Tiozzo in \cite{CT22} to describe the structure of $E(\alpha)$ and $\mathcal{E}$. By studying $\mathcal{D}_n(\alpha)$ defined in \eqref{defdn}, we give a proof of Theorem \ref{jh}. By finding a subset of $\mathcal{E}$ which has full Hausdorff dimension, we give a proof of Theorem \ref{bifurcation set}.
\begin{lemma}\label{lembj}
  For any fixed $k\ge 0$, the set
  \[A_k(a,b)=\left\{x\in [0,1]:T^k(x)=0\right\}\bigcup\left\{x\in [0,1]:T^{k+1}(x)\in [a,b]\right\}\]
  is closed, where $[a,b]$ is any closed subinterval of $[0,1)$. 
\end{lemma}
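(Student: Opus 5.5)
Since a set is closed exactly when it contains the limit of every convergent sequence of its points, I will take a sequence $x_j\in A_k(a,b)$ with $x_j\to x$ and show $x\in A_k(a,b)$. Passing to a subsequence, I may assume either that $T^k(x_j)=0$ for all $j$, or that $T^{k+1}(x_j)\in[a,b]$ for all $j$. I will argue by induction on $k$. This lets me work with a single application of the Gauss map at a time, and the key local fact is: if $y_j\to y$ and $y\ne0$ is not of the form $1/m$, then $T$ is continuous at $y$ and $T(y_j)\to T(y)$.

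\textbf{First case: $T^k(x_j)=0$ for all $j$.} The set $\{x:T^k(x)=0\}$ is the set of rationals whose continued fraction has length at most $k$ (together with $0$). I claim this set is closed, by induction. For $k=0$ it is $\{0\}$. For $k=1$ it is $\{0\}\cup\{1/m:m\ge1\}$, and its only accumulation point is $0$. For the inductive step, observe that $T^{k}(x)=0$ iff $x=0$ or $T(x)\in\{y:T^{k-1}(y)=0\}$. The set $\{x: T(x)\in F\}$, for $F$ closed and containing $0$, equals $\bigcup_m \phi_m(F)\cup\{0\}$, where $\phi_m(y)=1/(m+y)$. Each $\phi_m(F)$ is closed, and the union accumulates only at $0$, since $\phi_m(F)\subset[1/(m+1),1/m]$. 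Any point of $(0,1]$ lies in only finitely many of these intervals, so the union together with $0$ is closed. This is the same argument I will use in the main case.

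\textbf{Second case: $T^{k+1}(x_j)\in[a,b]$ for all $j$.} Write $B_k=\{x:T^{k+1}(x)\in[a,b]\}$, so $B_k=\{x: T(x)\in B_{k-1}\}$ and $B_0=\{x:T(x)\in[a,b]\}=\bigcup_m \phi_m([a,b])$. The point $0$ is excluded from $B_0$ unless $0\in[a,b]$, because $T(0)=0$. Also $1\notin[a,b]$, since $b<1$, and this is what matters. Each $\phi_m([a,b])$ is a closed interval lying in the interior region where $\phi_m$ is a homeomorphism onto its image. Hence the closure of $B_0$ is contained in $B_0\cup\{0\}\subset A_0(a,b)$, because $T^0(0)=0$. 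Inductively, I will show that the closure of $B_k$ lies in $A_k(a,b)$. Suppose $x_j\in B_k$ and $x_j\to x$. If $x=0$, then $x\in A_k$. If $x\neq0$, then $x$ lies in the interior of the branch domain $[1/(m+1),1/m]$ for some $m$, or at an endpoint $1/m$.
\begin{itemize}
\item \textbf{Interior point.} Here $T(x_j)\to T(x)$ with $T(x_j)\in B_{k-1}$, so $T(x)\in A_{k-1}(a,b)$, which means $x\in A_k(a,b)$.
\item \textbf{Endpoint $x=1/m$, approached from the left.} Here $T(x_j)\to 1$. But the set $B_{k-1}\cup\{T^{k-1}=0\}$ has closure avoiding a neighbourhood of $1$ only if $1\notin A_{k-1}$. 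Checking this needs care: is $1\in A_{k-1}$? We have $T(1)=0$, so $T^{k-1}(1)=0$ when $k\ge2$, and then $1\in A_{k-1}$ in that case.
\end{itemize}

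The main obstacle is the endpoint case $x=1/m$, which I will handle directly, without relying on the induction hypothesis for the point $1$. Near $1/m$, the points $x_j$ approaching from the left have $T(x_j)\to1$, and then $T^2(x_j)\to 0$ only from specific sides. Rather than chasing that, I observe that $T(x)=0$ at $x=1/m$, so $T^k(x)=0$ whenever $k\ge1$, and hence $x\in A_k$ immediately. For $k=0$ the endpoint case reduces to showing $1\notin[a,b]$, which holds since $b<1$. So the only genuinely delicate point is $k=0$ with $x=1/m$ approached from the left, and $b<1$ rules this out. Combining the two cases gives that $A_k(a,b)$ is closed.
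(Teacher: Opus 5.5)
Your proof is correct, but it runs the induction in the opposite direction from the paper.

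The paper peels off the \emph{last} iterate. It writes $\{T^{\ell+2}x\in[a,b]\}$ as $\bigcup_m\{T^{\ell+1}x\in[1/(b+m),1/(a+m)]\}$. It then compares continued-fraction digits of $x_n$ and $x$ to show that $T^{\ell+1}(x_n)$ stays bounded away from $0$ when $T^{\ell+1}(x)>0$, so only finitely many $m$ occur. Finally it applies the induction hypothesis to the finitely many sets $A_\ell(1/(b+m),1/(a+m))$. For that, the hypothesis must hold for all intervals, not just the given $[a,b]$.

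You instead peel off the \emph{first} iterate, $B_k=T^{-1}(B_{k-1})$, and keep $[a,b]$ fixed throughout. You need only two facts:
\begin{itemize}
\item $T$ is continuous on each open branch $(1/(m+1),1/m)$;
\item the discontinuity points $0$ and $1/m$ are mapped to $0$, so they lie in $\{T^k=0\}\subset A_k(a,b)$ once $k\ge1$.
\end{itemize}
This is more local and avoids the digit-stability argument and the finiteness step.

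In fact your first-case observation already proves the whole lemma. For $k\ge1$ one has $A_k(a,b)=T^{-1}(A_{k-1}(a,b))$, and $0\in A_{k-1}(a,b)$. Your argument that $T^{-1}(F)=\{0\}\cup\bigcup_m\phi_m(F)$ is closed for every closed $F\ni0$ therefore gives the induction step directly. The second case and the endpoint discussion are then unnecessary.

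There is one slip, though it does no harm. In the endpoint bullet the directions are reversed: $x_j\uparrow 1/m$ gives $T(x_j)\to0$, and it is $x_j\downarrow 1/m$ that gives $T(x_j)\to1$. The digression on whether $1\in A_{k-1}$ also leads nowhere. Neither affects the argument, since you close that case with $T^k(1/m)=0$ for $k\ge1$, and $k=0$ is already covered by your locally finite union base case.
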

\begin{proof}
  We prove Lemma \ref{lembj} by induction on $k$. 
  \par
  Suppose $k=0$. Then, 
  \[A_0(a,b)=\{0\}\cup\bigcup_{n=1}^{+\infty}\left[\frac{1}{b+n},\frac{1}{a+n}\right].\]
  Let $\{x_n\}$ be a sequence in $A_0(a,b)$ such that $\lim_{n\to\infty}x_n=x$. Now, we prove that $x\in A_0(a,b)$. Since $0\in A_0(a,b)$, without loss of generality, we suppose $x>0$. If for any $N\in\mathbb{N}$, there exists some $n$, such that $x_n\in \left[\frac{1}{b+N},\frac{1}{a+N}\right]$, then $\lim_{n\to\infty}x_n=0$, which leads to a contradiction. Hence, there exists some $N$, such that $x_n\in \left[\frac{1}{b+N},\frac{1}{a+N}\right]$ for all $n$ large enough. Since $b-a<1$, then $\left[\frac{1}{b+n_1},\frac{1}{a+n_1}\right]\cap \left[\frac{1}{b+n_2},\frac{1}{a+n_2}\right]=\emptyset$ for any $n_1\neq n_2$. Hence, we have $x\in \left[\frac{1}{b+N},\frac{1}{a+N}\right]$, which implies that $A_0(a,b)$ is closed.
  \par
  Suppose the conclusion holds for $k=\ell$. Now we consider $k=\ell+1$. Let $\{x_n\}$ be a sequence in $A_{\ell+1}(a,b)$ with $\lim_{n\to\infty}x_n=x$. Since
  \[A_{\ell+1}(a,b)=\left\{x\in [0,1]:T^{\ell+1}(x)=0\right\}\cup\bigcup_{n=1}^{+\infty}\left\{x\in [0,1]:T^{\ell+1}(x)\in \left[\frac{1}{b+n},\frac{1}{a+n}\right]\right\},\]
  we can suppose $T^{\ell+1}(x)>0$, otherwise $x\in A_{\ell+1}(a,b)$ obviously. Since $T^{\ell+1}(x)>0$, then $x=[a_1,\cdots, a_{\ell+1},a_{\ell+2},\cdots]$ with $a_{\ell+2}\ge 1$. Let $x_n=\left[a_1^{(n)},\cdots\right]$. Since $\lim_{n\to\infty}x_n=x$, then for $n$ large enough, we have $a_i^{(n)}=a_i$ for all $1\le i\le \ell+1$ and $|a_{\ell+2}^{(n)}-a_{\ell+2}|\le 1$. Hence, $T^{\ell+1}(x_n)\ge \frac{1}{a^{(n)}_{\ell+2}+1}\ge \frac{1}{a_{\ell+2}+2}$ when $n$ is large enough. Hence, for any $n$ large enough, the sequence $\{T^{\ell+1}(x_n)\}$ is bounded away from zero. Since $\{x_n\}$ is a sequence in $A_{\ell+1}(a,b)$, then there exists $N\ge 1$, such that
  \[x_n\in \bigcup_{m=1}^{N}\left\{x\in [0,1]:T^{\ell+1}(x)\in \left[\frac{1}{b+m},\frac{1}{a+m}\right]\right\}\]
  when $n$ is large enough. Note that
  \[\left\{x:T^{\ell+1}(x)\in \left[\frac{1}{b+m},\frac{1}{a+m}\right]\right\}=\bigcup_{q=1}^{+\infty}\left\{x:T^{\ell}(x)\in \left[\cfrac{1}{\cfrac{1}{a+m}+q},\cfrac{1}{\cfrac{1}{b+m}+q}\right]\right\}.\]
  Then, for $n$ large enough,
  \[x_n\in\bigcup_{m=1}^{N}\bigcup_{q=1}^{+\infty}\left\{x\in [0,1]:T^{\ell}(x)\in \left[\cfrac{1}{\cfrac{1}{a+m}+q},\cfrac{1}{\cfrac{1}{b+m}+q}\right]\right\}\subset \bigcup_{m=1}^{N}A_{\ell}\left(\frac{1}{b+m},\frac{1}{a+m}\right).\]
  By induction, each set $A_{\ell}\left(\frac{1}{b+m},\frac{1}{a+m}\right)$ is closed. Hence, for $n$ large enough, $\{x_n\}$ is a sequence in a closed set, which implies that 
  \[x\in \bigcup_{m=1}^{N}A_{\ell}\left(\frac{1}{b+m},\frac{1}{a+m}\right).\]
  Hence, there exists $m_0$, such that $T^{\ell+1}(x)\in \left(\frac{1}{b+m_0},\frac{1}{a+m_0}\right)$, which completes the proof.
\end{proof}
\begin{proposition}\label{propbj}
  For any $\alpha\in [0,1]$, the set $E(\alpha)$ is compact.
\end{proposition}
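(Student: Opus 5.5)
Since $E(\alpha)\subset[0,1]$ is bounded, compactness is the same as closedness, and the plan is to write $E(\alpha)$ as an intersection of closed sets built from Lemma \ref{lembj}. The Gauss map is discontinuous at the points $1/n$ and at $0$, so the sets $\{x:T^k(x)\le\alpha\}$ are not obviously closed. For example, $T(x)\to 1$ as $x\downarrow 1/(n+1)$ while $T(1/(n+1))=0$, so the closure of such a set could pick up points where $T^{k}$ jumps. Lemma \ref{lembj} is exactly designed to absorb this defect.

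The case $\alpha=1$ is trivial, since $E(1)=[0,1]$. For $\alpha\in[0,1)$ I would first check, for each $k\ge 0$, the identity
\[
\{x\in[0,1]: T^{k+1}(x)\le \alpha\}=\{x: T^{k}(x)=0\}\cup\{x: T^{k+1}(x)\in[0,\alpha]\}=A_k(0,\alpha).
\]
This holds because $T^k(x)=0$ forces $T^{k+1}(x)=0\le\alpha$. Since $[0,\alpha]$ is a closed subinterval of $[0,1)$, Lemma \ref{lembj} shows that each $A_k(0,\alpha)$ is closed. The condition at $k=0$, namely $x\le\alpha$, gives the closed set $[0,\alpha]$. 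Then
\[
E(\alpha)=[0,\alpha]\cap\bigcap_{k\ge 0}A_k(0,\alpha)
\]
is an intersection of closed subsets of $[0,1]$, hence closed, hence compact.

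The only step needing care is verifying the hypotheses of Lemma \ref{lembj}. The lemma requires $[a,b]\subset[0,1)$, so the interval $[0,\alpha]$ is admissible precisely when $\alpha<1$, which is why $\alpha=1$ is split off. In the degenerate case $\alpha=0$ the interval is $[0,0]$, which is still a closed subinterval (a point). The lemma's proof used only $b-a<1$ and the disjointness of the pieces $\left[\frac{1}{b+n},\frac{1}{a+n}\right]$, and both hold here. So I expect no real obstacle beyond confirming the set identity above, in particular that the convention $T(0)=0$ makes $T^{k}(x)=0$ imply $T^{j}(x)=0\le\alpha$ for all $j\ge k$.
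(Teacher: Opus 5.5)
Your proposal is correct and is essentially the paper's proof: you write $E(\alpha)=[0,\alpha]\cap\bigcap_{k\ge 0}A_k(0,\alpha)$, using that $T^k(x)=0$ forces $T^{k+1}(x)=0$, and then invoke Lemma \ref{lembj} for closedness of each piece. The only cosmetic difference is that the paper handles $\alpha=0$ directly via $E(0)=\{0\}$, whereas you treat $[0,0]$ as a degenerate interval in the lemma; both are valid.
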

\begin{proof}
  Since $E(0)=\{0\}$ and $E(1)=[0,1]$, without loss of generality, we can suppose that $\alpha\in (0,1)$.
  By definition,
  \[E(\alpha)=[0,\alpha]\cap\left(\bigcap_{k=1}^{+\infty}\left\{x\in [0,1]: T^k(x)\in [0,\alpha]\right\}\right).\]
  For any $k\ge 1$, since $T^{k-1}(x)=0$ implies that $T^k(x)=0$, then
  \[\left\{x: T^k(x)\in [0,\alpha]\right\}=\left\{x:T^{k-1}(x)=0\right\}\cup\left\{x: T^k(x)\in [0,\alpha]\right\}=A_{k-1}(0,\alpha),\]
  where $A_{k-1}(0,\alpha)$ is defined as in Lemma \ref{lembj}.
  Hence, by Lemma \ref{lembj}, the set $E(\alpha)$ is a countable intersection of closed sets, which completes the proof.
\end{proof}
  Now, let $t(\alpha)=\max E(\alpha)$. By using the function $\alpha\mapsto t(\alpha)$, we give a description of intervals on which the set $E(\alpha)$ does not change. Recall that $\mathcal{E}=\{x\in [0,1]: T^k(x)\le x,\ \forall k\ge 0\}$.
\begin{proposition}\label{propbb1}
  If $(a,b)$ is a connected component of $[0,1]\setminus \mathcal{E}$, then $E(\alpha)=E(a),\ \forall \alpha\in [a,b)$.
\end{proposition}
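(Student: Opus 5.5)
Since $E(\cdot)$ is increasing in $\alpha$, we already have $E(a)\subseteq E(\alpha)$ for $\alpha\in[a,b)$. So the plan is to prove the reverse inclusion through the function $t(\alpha)=\max E(\alpha)$, which exists by Proposition \ref{propbj}. The first step is to observe that $t(\alpha)\in\mathcal{E}$. Indeed, for $x=t(\alpha)$ and any $k\ge 0$, the point $T^k(x)$ again lies in $E(\alpha)$, because its forward orbit is part of the orbit of $x$. Hence $T^k(x)\le \max E(\alpha)=x$, which is exactly the defining condition of $\mathcal{E}$.

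The second step shows that $E(\alpha)=E(t(\alpha))$. The inclusion $E(t(\alpha))\subseteq E(\alpha)$ holds since $t(\alpha)\le\alpha$. Conversely, take $y\in E(\alpha)$. Each iterate $T^k(y)$ belongs to $E(\alpha)$, so $T^k(y)\le t(\alpha)$ for all $k\ge0$, that is, $y\in E(t(\alpha))$.

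The third step uses the gap. Fix $\alpha\in[a,b)$. By the first step, $t(\alpha)\in\mathcal{E}$, and also $t(\alpha)\le \alpha<b$. The open interval $(a,b)$ is disjoint from $\mathcal{E}$, so $t(\alpha)\notin(a,b)$, which forces $t(\alpha)\le a$. By monotonicity, $E(\alpha)=E(t(\alpha))\subseteq E(a)$, and combined with the first paragraph this gives $E(\alpha)=E(a)$.

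The only delicate point is the existence of the maximum, which requires compactness of $E(\alpha)$. This is handled by Proposition \ref{propbj}. We also need $E(\alpha)\neq\emptyset$, which holds because $0\in E(\alpha)$. Since $a\le\alpha$, the conclusion does not depend on whether $a$ itself lies in $\mathcal{E}$.
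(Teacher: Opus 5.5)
Your proof is correct and follows essentially the same route as the paper: set $t(\alpha)=\max E(\alpha)$ (which exists by compactness), show $t(\alpha)\in\mathcal{E}$, and use $(a,b)\cap\mathcal{E}=\emptyset$ to force $t(\alpha)\le a$. Your identity $E(\alpha)=E(t(\alpha))$ gives a cleaner finish than the paper's. The paper instead proves $t(\alpha)=a$ (using $a\in\mathcal{E}$) and then argues by a loosely written contradiction that only compares points inside $(a,b)$, whereas your version yields $E(\alpha)=E(a)$ directly, including the left endpoint.
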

\begin{proof}
  First we claim that $t(\alpha)\in \mathcal{E}\cap [0,\alpha]$. Indeed, since $t(\alpha)\in E(\alpha)$, then $T^k(t(\alpha))\in E(\alpha),\ \forall k\ge 0$. Hence, by the definition of $t(\alpha)$, we have $T^k(t(\alpha))\le t(\alpha),\ \forall k\ge 0$, which implies $t(\alpha)\in\mathcal{E}$. Since $E(\alpha)\subset [0,\alpha]$, we prove the claim.
  \par
  Now, fix an $\alpha\in (a,b)$. Since $(a,b)$ is a connected component of $[0,1]\setminus \mathcal{E}$, then $a\in \mathcal{E}$. Hence, we have $T^k(a)\le a,\ \forall k\ge 0$. Thus, $a\in E(a)$. Since $a<\alpha$, then $a\in E(\alpha)$. By the definition of $t(\alpha)$, we have $a\le t(\alpha)$. On the other hand, since $(a,b)\cap \mathcal{E}=\emptyset$ and $t(\alpha)\in \mathcal{E}\cap [0,\alpha]$, then $t(\alpha)\le a$. Therefore, we have $t(\alpha)=a$. Since $E(\alpha)$ is increasing, then for any $a<\alpha_1<\alpha_2<b$, we have $E(\alpha_1)\subset E(\alpha_2)$. If $E(\alpha_2)\setminus E(\alpha_1)\neq \emptyset$, then $t(\alpha_2)\ge t(\alpha_1)$, which leads to a contradiction.
\end{proof}
\begin{proposition}\label{aejc}
  The function $\alpha\mapsto E(\alpha)$ is locally constant almost everywhere and increasing.
\end{proposition}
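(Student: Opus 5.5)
Monotonicity is immediate from \eqref{edy}. If $\alpha_1\le\alpha_2$ and $x\in E(\alpha_1)$, then $T^k(x)\le\alpha_1\le\alpha_2$ for all $k\ge0$, so $x\in E(\alpha_2)$. The substance is the ``locally constant almost everywhere'' part. By Proposition \ref{propbb1}, $\alpha\mapsto E(\alpha)$ is constant on $[a,b)$ for every connected component $(a,b)$ of the open set $[0,1]\setminus\mathcal{E}$, so it is locally constant at every point of $[0,1]\setminus\mathcal{E}$. It therefore suffices to show that $\mathcal{E}$ has Lebesgue measure zero.

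For that I would use ergodicity of the Gauss map with respect to the Gauss measure $\mu$, $d\mu=\frac{1}{\log 2}\frac{dx}{1+x}$, which is equivalent to Lebesgue measure. Write $\mathcal{E}=\bigcup_{m\ge1}\mathcal{E}_m\cup\{0\}$ with
\[\mathcal{E}_m=\{x\in\mathcal{E}: x\in(1/(m+1),1/m]\}\subset \{x: T^k(x)\le 1/m\ \ \forall k\ge0\}=E(1/m).\]
Thus it is enough to prove $\mathrm{Leb}(E(\beta))=0$ for every $\beta<1$. (The point $\alpha=1$ lies in $\mathcal{E}$, but it is a single point and can be ignored. Alternatively, write $\mathcal{E}\cap[0,\beta]\subset E(\beta)$ and let $\beta\uparrow1$ through a countable sequence; the set $\mathcal{E}\cap[0,1)$ is a countable union of the sets $\mathcal{E}\cap[0,\beta_j]$.)

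To see that $E(\beta)$ is null for fixed $\beta<1$, observe that $E(\beta)$ is contained in the set of $x$ whose orbit never visits the interval $(\beta,1]$, which has positive $\mu$-measure. By Birkhoff's ergodic theorem, $\mu$-a.e.\ $x$ satisfies
\[\frac1n\#\{0\le k<n: T^kx\in(\beta,1]\}\to\mu((\beta,1])>0,\]
so almost every orbit enters $(\beta,1]$. Hence $\mu(E(\beta))=0$, and therefore $\mathrm{Leb}(E(\beta))=0$. Taking the countable union over $\beta=1-1/j$ gives $\mathrm{Leb}(\mathcal{E})=0$.

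This yields local constancy at every point of the full-measure open set $[0,1]\setminus\mathcal{E}$. The only point needing care is whether ``locally constant'' at $\alpha\in(a,b)$ requires constancy on a two-sided neighborhood, and Proposition \ref{propbb1} gives exactly that, since $\alpha\in(a,b)\subset[a,b)$. I therefore expect no real obstacle beyond invoking ergodicity of $T$ (a classical fact) and the reduction from $\mathcal{E}$ to the sets $E(\beta)$, $\beta<1$.
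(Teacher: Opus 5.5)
Your argument is essentially the paper's proof: cover $\mathcal{E}\setminus\{1\}$ by the sets $E(1-1/j)$, show each is Lebesgue-null via ergodicity of the Gauss map, and conclude with Proposition~\ref{propbb1}. Your Birkhoff argument just makes explicit the paper's terse ``$T$-invariant subset of $[0,1-\frac1n]$, hence null''. Proposition~\ref{propbb1}'s use of the open set $[0,1]\setminus\mathcal{E}$ tacitly relies on the closedness of $\mathcal{E}$, which is proved in Proposition~\ref{bfca}.

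The one slip is your first decomposition. The inclusion $\mathcal{E}_m\subset E(1/m)$ gives nothing for $m=1$, since $E(1)=[0,1]$. Moreover $\mathcal{E}_1=\mathcal{E}\cap(1/2,1]$ contains much more than the point $1$, for example the fixed point $[\overline{1}]=(\sqrt5-1)/2$ and all $[1,m,x_3,\dots]$ with $x_i\le m-1$. So it is your ``alternative'' decomposition $\mathcal{E}\cap[0,1-1/j]\subset E(1-1/j)$ that actually carries the proof.
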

\begin{proof}
  By the definition of $E(\alpha)$, it is clear that $E(\alpha)$ is increasing. We write $\mathcal{E}$ as a countable union, 
  \[\mathcal{E}=\bigcup_{n=1}^{+\infty}\left(\left\{x\in[0,1]: T^k(x)\le x,\ \forall k\ge 0\right\}\cap \left[0,1-\frac{1}{n}\right]\right)\cup \{1\}.\]
  For any $x\le 1-\frac{1}{n}$, we have $T^k(x)\le x\le 1-\frac{1}{n}$. Hence,
  \[\mathcal{E}\subset \bigcup_{n=1}^{+\infty}E\left(1-\frac{1}{n}\right)\cup \{1\}.\]
  Since $E\left(1-\frac{1}{n}\right)\subset \left[0,1-\frac{1}{n}\right]$ is a $T$-invariant set, then $E\left(1-\frac{1}{n}\right)$ has zero Lebesgue measure for all $n\ge 1$. Hence, $\mathcal{E}$ has zero Lebesgue measure. By Proposition \ref{propbb1}, the function $\alpha\mapsto E(\alpha)$ is locally constant almost everywhere.
\end{proof}
Proposition \ref{aejc} shows that the function $\alpha\mapsto E(\alpha)$ is locally constant almost everywhere, but it cannot give any information about the interval of $\alpha$ that $E(\alpha)$ does not change. In order to prove Theorem \ref{jh}, we need the following lemma.\par
Recall that $\mathcal{D}_n(\alpha)$ is the collection of all admissible strings of length $n$ over the set $E(\alpha)$, i.e., $\mathcal{D}_{n}(\alpha)=\left\{(a_1(x),\cdots,a_n(x)): x\in E(\alpha)\right\}$ and \[\mathcal{G}_n=\left\{(a_1,\cdots,a_n)\in\mathbb{N}^n:\ \exists\ k\le n\ \text{such that}\ a_k\le a_1-1\right\}.\]
\begin{lemma}\label{zf}
  Let $\alpha=[a_1,\cdots,a_n]\in\mathbb{Q}$ with $a_n\ge 2$. 
  \begin{enumerate}[1)]
    \item Suppose $n$ is odd. Let $\alpha_j=[a_1,\cdots,a_n-1,1,j]$. Then, we have $\mathcal{D}_{n+2}(\alpha_j)\subset\mathcal{D}_{n+2}(\alpha)\cup\mathcal{F}_{n+2}(\alpha_j)$, where
    \[\mathcal{F}_{n+2}(\alpha_j)=
      \begin{cases}
        \emptyset ,\quad \text{if}\ (a_1,\cdots,a_n-1,1)\in \mathcal{G}_{n+1}\ \text{or}\ a_1=1\ \text{and}\ a_2\le j-1,\\
        (t,a_1,\cdots,a_n-1,1)_{t\ge a_1}\cup (a_1,\cdots,a_n-1,1,s)_{j\le s\le a_2},\quad \text{otherwise}.
      \end{cases}\]
    \item Suppose $n$ is even. Let $\alpha_j=[a_1,\cdots,a_n,j]$. Then, we have $\mathcal{D}_{n+1}(\alpha_j)\subset\mathcal{D}_{n+1}(\alpha)\cup\mathcal{F}_{n+1}(\alpha_j)$, where
    \[\mathcal{F}_{n+1}(\alpha_j)=
      \begin{cases}
        \emptyset ,\quad \text{if}\ (a_1,\cdots,a_n)\in \mathcal{G}_{n}\ \text{or}\ a_1=a_n\ \text{and}\ a_2\le j-1,\\
        (t,a_1,\cdots,a_n)_{t\ge a_1}\cup (a_1,\cdots,a_n,s)_{s\ge j},\quad \text{otherwise}.
      \end{cases}\]
  \end{enumerate}
\end{lemma}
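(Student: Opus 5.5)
The plan is to compare the two survivor sets directly. We show that every length-$(n+2)$ (resp. $(n+1)$) cylinder word that is realized by a point of $E(\alpha_j)$ but by no point of $E(\alpha)$ must come from an orbit passing through the gap $(\alpha,\alpha_j]$. We then read off the possible digit patterns near such a passage. I treat case 1) in detail; case 2) is the same argument with the block $w=(a_1,\dots,a_n)$ and the appended digit $j$ in place of $w=(a_1,\dots,a_n-1,1)$.

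\emph{Ordering and the gap.} Put $w=(a_1,\dots,a_n-1,1)$, a word of length $n+1$ with $[w]=\alpha$. Since $n+1$ is even, the comparison lemma (case 2(a)) gives $\alpha<\alpha_j$. Hence $E(\alpha)\subset E(\alpha_j)$ and $\mathcal D_{n+2}(\alpha)\subset\mathcal D_{n+2}(\alpha_j)$. Using the comparison lemma again, I would show that every $y\in(\alpha,\alpha_j]$ has an expansion beginning with $w$, followed by a digit $s\ge j$. Two further facts are needed. First, every point of $E(\alpha_j)$ has all digits $\ge a_1$, because $T^ix\le\alpha_j<1/(a_1-1)$ (the case $a_1=1$ is vacuous). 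Second, if $y$ begins with $ws$ and its orbit stays below $\alpha_j$, then comparing $T^{n}y=[1,s,\dots]$ with $\alpha_j$ forces $s\le a_2$ when $a_1=1$, with an analogous constraint in general.

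\emph{The empty case.} Let $x\in E(\alpha_j)\setminus E(\alpha)$. Then some $T^kx=y\in(\alpha,\alpha_j]$, so the expansion of $x$ contains $w$ starting at position $k+1$. Suppose $w\in\mathcal G_{n+1}$, i.e.\ some letter $w_i\le a_1-1$. Then $T^{k+i-1}x$ has first digit $<a_1$, so $T^{k+i-1}x>\alpha_j$, a contradiction. Suppose instead $a_1=1$ and $a_2\le j-1$. Then $T^{k+n}x=[1,s,\dots]$ with $s\ge j>a_2$, and since position $2$ is even this gives $T^{k+n}x>[1,a_2,\dots]=\alpha_j$, again a contradiction. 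In both situations $E(\alpha_j)=E(\alpha)$, so $\mathcal F_{n+2}(\alpha_j)=\emptyset$ works.

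\emph{The general case.} Now let $\omega$ be the length-$(n+2)$ prefix of some $x\in E(\alpha_j)$. If no orbit point among $x,\dots,T^{n+1}x$ lies in $(\alpha,\alpha_j]$, I want to produce $x'\in E(\alpha)$ with the same prefix $\omega$. The natural construction replaces the tail of $x$ after an appropriate position by a tail lying in $E(\alpha)$, for instance the periodic point $[\overline{w}]$ or a point of $E(\alpha)$ with matching initial digits. I would then check, with the comparison lemma, that the spliced point stays below $\alpha$. Otherwise the occurrence of $w$ overlaps the window. The window then equals $(t,w)$ with $t\ge a_1$, or $(w,s)$ with $j\le s\le a_2$ by the constraints above, or it partially overlaps $w$. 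In the last case the same splicing argument should show the window already lies in $\mathcal D_{n+2}(\alpha)$.

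\emph{Main obstacle.} I expect the hardest step to be the splicing/realization argument: showing that windows which only partially meet the block $w$, or avoid it, are genuinely realized by points of $E(\alpha)$. This requires controlling every shift of the spliced point against $\alpha$, a digit-by-digit parity comparison. I would first try the tail $[\overline{a_1,\dots,a_n-1,1}]$, since its shifts are dominated by $\alpha$ exactly when $w\notin\mathcal G_{n+1}$. That is precisely the regime of the nontrivial case.
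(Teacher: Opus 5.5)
Your handling of the gap $(\alpha,\alpha_j]$ and of the empty case is correct and is the paper's argument; this covers both $w\in\mathcal G_{n+1}$ and $a_1=1$, $a_2\le j-1$. That conclusion, $E(\alpha_j)=E(\alpha)$, is also the only part of the lemma used later, in Corollary~\ref{bb2} and Proposition~\ref{wsgs1}.

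The general case, however, rests on a step that is false. Windows that only partially meet $w$ need not lie in $\mathcal D_{n+2}(\alpha)$, so no splicing argument can complete your proof.

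Let $u=(a_1,\dots,a_{n-1},a_n-1)$ be the length-$n$ prefix of $w$. Any point whose expansion begins with $u$ equals $[a_1,\dots,a_{n-1},v]$ with $a_n-1\le v<a_n$. Since position $n$ is odd, this is strictly larger than $[a_1,\dots,a_n]=\alpha$. So $u$ is forbidden in $E(\alpha)$. Consequently, when the gap point is $T^2x$, the window $(t_1,t_2,u)$ is a new word. If $t_1,t_2\ge 2$, it is also outside $\mathcal F_{n+2}(\alpha_j)$, because in the non-empty case $a_1=1$ (as $w$ ends in $1$).

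Concretely, take $\alpha=[1,2,2]=5/7$ and $j=1$. Then $\alpha_1=[1,2,1,1,1]=8/11$, and we are in the non-empty case since $a_2=2\ge j$.
\begin{itemize}
\item The point $y=[\overline{1,2,1,1}]=(\sqrt{6}-1)/2\approx 0.7247$ lies in $(5/7,8/11]$.
\item Its orbit $y$, $[\overline{2,1,1,1}]$, $[\overline{1,1,1,2}]$, $[\overline{1,1,2,1}]$ stays below $8/11$. Hence $x=[2,2,\overline{1,2,1,1}]\in E(8/11)$.
\item The prefix $(2,2,1,2,1)$ of $x$ is neither of the form $(t,w)$ nor $(w,s)$.
\item No $z\in E(5/7)$ has this prefix, since then $T^2z=[1,2,v]$ with $v\in[1,2)$, so $T^2z>[1,2,2]$.
\end{itemize}
Thus the inclusion claimed in 1) fails, and the statement itself is false in the non-empty case.

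Two further remarks. First, your proposed splicing tail $[\overline{w}]$ cannot lie in $E(\alpha)$. Since $w$ has even length, $[\overline{w}]>[w]=\alpha$, so it is itself a point of the gap. Second, the paper's proof has exactly the same hole. It asserts without proof that $\mathcal D_{n+1}(\alpha_j)\subset\mathcal D_{n+1}(\alpha)\cup\{w\}$, and the words $(t,u)$ violate this; in the example above, $(2,1,2,1)$ does. A correct version must either enlarge $\mathcal F_{n+2}(\alpha_j)$, at least by all words $(t_1,t_2,u)$, or be restricted to the empty case.
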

\begin{proof}
  We only prove $1)$, the proof of $2)$ is the same as the proof of $1)$. Since $n$ is odd, we have $\alpha_j>\alpha$. Hence, 
  \[E(\alpha_j)=E(\alpha)\cup\bigcup_{k=0}^{+\infty}\left\{x\in E(\alpha_j):T^{k}(x)\in (\alpha,\alpha_j]\right\}.\]
  Now, we consider the set 
  \begin{align}\label{chaji1}
    \bigcup_{k=0}^{+\infty}\left\{x\in E(\alpha_j):T^{k}(x)\in (\alpha,\alpha_j]\right\}.
  \end{align}
  We first prove that \eqref{chaji1} is an empty set if $(a_1,\cdots,a_n-1,1)\in \mathcal{G}_{n+1}\ \text{or}\ a_1=1\ \text{and}\ a_2\le j-1$ by contradiction.
  Suppose \eqref{chaji1} is not empty. Then, there exists some $k\ge 0$, such that $T^k(x)\in (\alpha,\alpha_j]$, then by comparing the expansion of $\alpha$ and $\alpha_j$, we have $T^k(x)=[a_1,\cdots,a_n-1,1,\cdots]$. 
  \par
  Suppose $(a_1,\cdots,a_n-1,1)\in\mathcal{G}_{n+1}$. Then there exists some $n_0\ge 2$, such that $a_{n_0}< a_1$. Hence, $T^{k+n_0-1}(x)=[a_{n_0},\cdots]>\alpha_j$. This leads to a contradiction.
  \par
   Suppose $a_1=1$ and $a_2\le j-1$. Since $T^k(x)<\alpha_j$, then $T^{k}(x)=[1,a_2,\cdots,a_n-1,1,s,\cdots]$ with $s\ge j$. Therefore, we have $T^{k+n}(x)=[1,s,\cdots]>\alpha_j$. This leads to a contradiction. Hence, under these two cases, we have $E(\alpha_j)=E(\alpha)$, which implies $\mathcal{D}_{n+2}(\alpha_j)=\mathcal{D}_{n+2}(\alpha)$. 
  \par
  Now, we consider other cases. It can be checked that $\mathcal{D}_{n+1}(\alpha_j)\subset\mathcal{D}_{n+1}(\alpha)\cup \{(a_1,\cdots,a_n-1,1)\}$.
  Hence, we need only consider the following two kinds of strings $(t,a_1,\cdots,a_n-1,1)_{t\ge 1}$ and $(a_1,\cdots,a_n-1,1,s)_{s\ge 1}$. Now, we prove that 
  \[(t,a_1,\cdots,a_n-1,1)_{t\le a_1-1},\ (a_1,\cdots,a_n-1,1,s)_{s\le j-1},\ (a_1,\cdots,a_n-1,1,s)_{s\ge a_2+1}\notin \mathcal{D}_{n+2}(\alpha_j)\]
  by contradiction. Suppose there exists some $t\le a_1-1$ such that $(t,a_1,\cdots,a_n-1,1)\in \mathcal{D}_{n+2}(\alpha_j)$. Then, there exists some $k_0$, such that $T^{k_0}(x)=[t,a_1,\cdots]>\alpha_j$. Suppose there exists some $s\le j-1$ such that $(a_1,\cdots,a_n-1,1,s)\in \mathcal{D}_{n+2}(\alpha_j)$. Then, there exists some $k_1$, such that $T^{k_1}(x)=[a_1,\cdots, a_n-1,1,s]>\alpha_j$. Finally, suppose there exists some $s\ge a_2+1$ such that $(a_1,\cdots,a_n-1,1,s)\in \mathcal{D}_{n+2}(\alpha_j)$. Then, there exists some $k_2$, such that $T^{k_2}(x)=[1,s,\cdots]>\alpha_j$. The three subcases all lead to a contradiction. Hence, combining the three cases above, we complete the proof of $1)$.
\end{proof}
\begin{corollary}\label{bb2}
  The set $E(\alpha)$ does not change when $\alpha\in \bigg[[a_1,\cdots,a_{2n},a_1],[a_1,\cdots,a_{2n},1]\bigg]$ and $\alpha\in \bigg[[a_1,\cdots,a_{2n+1},1],[a_1,\cdots,a_{2n+1},a_1]\bigg)$. Moreover, if $a_{2n+1}\ge 2$, then the set $E(\alpha)$ also does not change when $\alpha\in \bigg[[1,a_2,\cdots, a_{2n+1}],[1,\cdots,a_{2n+1}-1,\overline{1,a_2}]\bigg)$.
\end{corollary}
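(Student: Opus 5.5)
The plan is to avoid Lemma \ref{zf} and argue directly with the quantity $t(\alpha)=\max E(\alpha)$ and Proposition \ref{propbb1}. The principle is this: if $[u,v)$ is an interval such that no point of $\mathcal{E}$ lies in $(u,v)$, then $E(\alpha)=E(u)$ for all $\alpha\in[u,v)$. Indeed, the proof of Proposition \ref{propbb1} shows $t(\alpha)\in\mathcal{E}\cap[0,\alpha]$, so $t(\alpha)\le u$ for every $\alpha\in[u,v)$. Every $x\in E(\alpha)$ has $T^k(x)\in E(\alpha)$, hence $T^k(x)\le t(\alpha)\le u$ for all $k$, which gives $E(\alpha)\subset E(u)$; monotonicity gives the reverse inclusion. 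For the closed version $[u,v]$, I additionally need $v\notin\mathcal{E}$, or else a direct check that the orbit of $v$ exceeds $v$. So for each of the three families the task reduces to showing that no $x$ in the open interval satisfies $T^k(x)\le x$ for all $k$.

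\textbf{Family 1.} Take $x$ strictly between $[a_1,\dots,a_{2n},a_1]$ and $[a_1,\dots,a_{2n},1]$. Then $x=[a_1,\dots,a_{2n},b,\dots]$ with $1\le b\le a_1$, using the comparison lemma at the even position $2n+1$ parity. The case $b$ strictly between $1$ and $a_1$ is immediate: $T^{2n}(x)=[b,\dots]$ with $b<a_1$ exceeds $x$ once $b\le a_1-1$, so $x\notin\mathcal{E}$. The boundary cases $b=a_1$ or $b=1$ need a finer look. When $b=a_1$, $x$ lies below $[a_1,\dots,a_{2n},a_1]$ in the appropriate parity sense, so $T^{2n}(x)=[a_1,c,\dots]$. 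I then compare $T^{2n}(x)$ with $x$ digit by digit, which forces an inequality on the tail that propagates. Here I expect to use the periodic-type argument of Carminati--Tiozzo: $x\in\mathcal{E}$ combined with $x$ sharing the prefix $(a_1,\dots,a_{2n},a_1)$ forces $x\ge[\overline{a_1,\dots,a_{2n}}]$ in the right order, which contradicts $x$ being in the interval. The case $b=1$ is symmetric, via $T^{2n}(x)=[1,\dots]$, which is large. The right endpoint $[a_1,\dots,a_{2n},1]=[a_1,\dots,a_{2n}+1]$ should then be checked to lie outside $\mathcal{E}$ or to have $E$ unchanged, using $T^{2n-1}$ of it, namely $[a_{2n}+1]$ compared against $a_1$.

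\textbf{Family 2.} This is identical with odd length $2n+1$, where the order reverses, so the interval is $[[\dots,1],[\dots,a_1])$.

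\textbf{Family 3.} Here $x=[1,a_2,\dots,a_{2n+1}-1,\dots]$ with tail lying between $1$ and $[\overline{1,a_2}]$. Applying $T^{2n}$ gives $[1,\dots]$-type values, and $\mathcal{E}$-membership would require the tail to dominate $[1,a_2,\dots]$ in the parity order. Iterating this forces the tail to equal $\overline{1,a_2}$, which is exactly the excluded right endpoint.

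The main obstacle is the boundary digit cases ($b=a_1$ in Family 1, and the tail comparison in Family 3). A naive first-differing-digit argument does not immediately give a contradiction there, so one must iterate the shift and use a self-similarity/periodicity argument. I would attempt this by induction on the number of times the prefix recurs, showing that an element of $\mathcal{E}$ in the interval must equal the periodic endpoint. Lemma \ref{zf} can alternatively be used to confirm the finite-level statements $\mathcal{D}_m(\alpha_j)=\mathcal{D}_m(\alpha)$ in the emptiness cases.
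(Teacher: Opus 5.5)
\textbf{The gap: the closed right endpoint in the first family.} Your reduction is correct: the argument of Proposition~\ref{propbb1} gives $t(\alpha)\in\mathcal{E}\cap[0,\alpha]$, so $E$ is constant on $[u,v)$ whenever $(u,v)\cap\mathcal{E}=\emptyset$. You also rightly observe that the closed interval would additionally require $R=[a_1,\ldots,a_{2n},1]\notin\mathcal{E}$.

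The proposed check of $T^{2n-1}R$ against $a_1$ cannot succeed, because the closed claim is false in general. The Gauss map reads $R$ through its terminating expansion $[a_1,\ldots,a_{2n}+1]$, so $T^{2n}R=0$, not $1$, and $R$ can lie in $\mathcal{E}$. For example, take $n=1$ and $a_1=a_2=2$. Then $L=[2,2,2]=5/12$ and $R=[2,2,1]=3/7$. The orbit $3/7\mapsto 1/3\mapsto 0\mapsto\cdots$ stays $\le 3/7$, so $3/7\in E(3/7)\setminus E(5/12)$.

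What can be proved is constancy on $[L,R)$, which is what $C_1(n)$ in Theorem~\ref{jh} actually uses. The paper's proof has the same blind spot. It places the escaping iterate $T^{k_0}(x)$ in the open interval $(L,R)$ and never treats $T^{k_0}(x)=R$, where $T^{k_0+2n}(x)=0$ and the contradiction disappears.

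\textbf{The first two families need no extra argument.} Write $w=(a_1,\ldots,a_{2n})$. A point $x\in(L,R)$ is $\phi_w(u)$ with $u=T^{2n}x\in(1/a_1,1)$. So its $(2n+1)$-st digit is automatically $\le a_1-1$, and $T^{2n}x>1/a_1>x$. The case $b=a_1$ puts $x$ outside the interval, as your own remark shows. The case $b=1$ is covered by the same line. No Carminati--Tiozzo periodicity argument is needed, and the bound $x\ge[\overline{a_1,\ldots,a_{2n}}]$ is neither justified nor relevant. The second family is identical with $T^{2n+1}$. This is the paper's computation, phrased for points of $\mathcal{E}$ rather than for orbits entering the gap.

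\textbf{The third family: a genuinely different route that works, but needs precision.} The paper iterates Lemma~\ref{zf} with $j=a_2+1$. It obtains equal $E$ at the rationals $[1,\ldots,a_{2n+1}-1,(1,a_2)^m,1,a_2+1]$, which increase to $\beta$, and then uses monotonicity. You instead show $(\alpha,\beta)\cap\mathcal{E}=\emptyset$ directly, which avoids Lemma~\ref{zf} entirely. State the squeeze as follows.
\begin{enumerate}
\item Set $w=(1,a_2,\ldots,a_{2n+1}-1)$, which has odd length. Then $x\in(\alpha,\beta)$ means $u=T^{2n+1}x\in([\overline{1,a_2}],1)$. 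Note it is $T^{2n+1}$, not $T^{2n}$.
\item Since $u>1/2$, its first digit is $1$. Let $c$ be its second digit.
\item If $c<a_2$, then $u<[\overline{1,a_2}]$, which is impossible.
\item If $c>a_2$, then $u=[1,c,\ldots]>[1,a_2,\ldots]=x$, since the first difference is at an even position.
\item If $c=a_2$, then $T^2u\in([\overline{1,a_2}],1)$, because $\phi_{(1,a_2)}$ is increasing and fixes $[\overline{1,a_2}]$.
\item Since $u\neq[\overline{1,a_2}]$, the case $c>a_2$ must occur after finitely many steps. This yields an iterate of $x$ exceeding $x$, so $x\notin\mathcal{E}$.
\end{enumerate}
Your route gives a self-contained criterion tied directly to the gaps of $\mathcal{E}$. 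The paper's route produces finite-level identities for $\mathcal{D}_m$ along the way.
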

\begin{proof}
  First, we prove $E([a_1,\cdots,a_{2n},a_1])=E([a_1,\cdots,a_{2n},1])$ by contradiction. Suppose there exists some $x\in E([a_1,\cdots,a_{2n},1])\setminus E([a_1,\cdots,a_{2n},a_1])$. Then, there exist some $k_0$ such that $T^{k_0}(x)\in \bigg([a_1,\cdots,a_{2n},a_1],[a_1,\cdots,a_{2n},1]\bigg)$. Hence, we have $T^{k_0}(x)=[a_1,\cdots, a_{2n},s,\cdots]$ for some $s\le a_1-1$. Thus, we have $T^{k_0+2n}(x)=[s,\cdots]>[a_1,\cdots,a_{2n},1]$. This leads to a contradiction. By the same arguments above, we also have $E([a_1,\cdots,a_{2n+1},a_1])\subset E([a_1,\cdots,a_{2n+1},1])\cup [a_1,\cdots,a_{2n+1},a_1]$. Hence, The set $E(\alpha)$ does not change when $\alpha\in \bigg[[a_1,\cdots,a_{2n+1},1],[a_1,\cdots,a_{2n+1},a_1]\bigg)$.
  \par
  Now, we prove the moreover part of Corollary \ref{bb2}. Since $a_{2n+1}\ge 2$ and $a_1=1,\ a_2\le a_2+1-1$, by $2)$ of Lemma \ref{zf}, we have $\mathcal{D}_{2n+2}([1,\cdots,a_{2n+1}])=\mathcal{D}_{2n+2}([1,\cdots,a_{2n+1}-1,1,a_2+1])$. Hence, by the definition of $\mathcal{D}_k(\alpha)$, we have $E([1,\cdots,a_{2n+1}])=E([1,\cdots,a_{2n+1}-1,1,a_2+1])$. Since the length of $[1,\cdots,a_{2n+1}-1,1,a_2+1]$ is also odd and $a_2+1\ge 2$, we also have $E([1,\cdots,a_{2n+1}-1,1,a_2+1])=E([1,\cdots,a_{2n+1}-1,1,a_2,1,a_2+1])$. Denote by $(1,a_2)^m$ the string obtained by repeating $(1,a_2)$ $m$ times. Repeating the process above, we have that
  \[E([1,\cdots,a_{2n+1}])=E([1,\cdots,a_{2n+1}-1,(1,a_2)^m,1,a_2+1]),\quad \forall m\ge 0.\] 
  For any $[1,\cdots,a_{2n+1}] \le \alpha<[1,\cdots,a_{2n+1}-1,\overline{1,a_2}]$, there exists some $m_0\ge 0$ such that 
  $\alpha<[1,\cdots,a_{2n+1}-1,(1,a_2)^{m_0},1,a_2+1]$, which completes the proof.
\end{proof}
\begin{proof}[Proof of Theorem \ref{jh}]
  %Suppose $F=\bigcup\limits_{n=1}^{+\infty}F_n$ with
  %\[F_n=\{[x_1,\cdots]\in [0,1]\setminus \mathbb{Q}: x_{2k+1}=1,\ \forall k\ge n\}.\]
   %Since $F_n$ has zero Lebesgue measure for all $n\ge 1$ (see \cite[Theorem 2.1]{HJS12}). Hence, $F$ also has zero Lebesgue measure. 
   By combining Proposition \ref{aejc} and Corollary \ref{bb2}, we need only prove that for any $x\in [0,1]\setminus \mathbb{Q}$, for any $\varepsilon>0$, there exists a point 
  \[y\in \bigcup_{n=1}^{+\infty}C_1(n)\cup C_2(n)\cup C_3(n)\]
  such that $|x-y|<\varepsilon$. 
  We distinguish two cases.
  \begin{enumerate}[1)]
    \item Suppose $x=[x_1,x_2,\cdots]$ with $x_1\ge 2$. Then, for any $\varepsilon>0$, there exists some $n_0\ge 1$ such that $|x-y|<\varepsilon$, where $y=[x_1,\cdots, x_{2n_0+1},1]$. It can be checked that $y\in C_2(n_0)$, which completes the proof of this case.
    \item Suppose $x=[1,x_2,\cdots]$. For all $\varepsilon>0$, there exists some $k_0\ge 1$ such that $|x-y|<\varepsilon$, where $y=[x_1,\cdots, x_{2k_0},2]$. It can be checked that $y\in C_3(k_0)$, which completes the proof of this case.
  \end{enumerate}
\end{proof}
In order to prove theorem \ref{bifurcation set}, we prove the following proposition first.
\begin{proposition}\label{bfca}
  The set $\mathcal{E}=\{x\in [0,1]: T^k(x)\le x,\ \forall k\ge 0\}$ is compact.
\end{proposition}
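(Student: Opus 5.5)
We need to show that $\mathcal{E}$ is closed. It is bounded, so compactness follows.

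Let $x_m\in\mathcal{E}$ with $x_m\to x$. Since $0\in\mathcal{E}$ (as $T(0)=0$), we may assume $x>0$. Fix $k\ge 0$; the goal is $T^k(x)\le x$.

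\emph{Case $x\notin\mathbb{Q}$.} Here $T^k$ is continuous at $x$: every iterate $T^j(x)$ is irrational, and the Gauss map is continuous at irrational points. Passing to the limit in $T^k(x_m)\le x_m$ gives $T^k(x)\le x$.

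\emph{Case $x\in\mathbb{Q}$.} This is the delicate case, since $T^k$ may jump at $x$. Write $x=[a_1,\dots,a_n]$, so $T^k(x)=0\le x$ for $k\ge n$, and only $k<n$ needs attention. Near $x$, the continued fraction expansions of the $x_m$ begin either with $(a_1,\dots,a_n,\text{large})$ or with $(a_1,\dots,a_n-1,1,\text{large})$. On each side, $T^k$ restricted to points with the corresponding prefix is continuous up to $x$, because it is a Möbius map on that cylinder. Its boundary value at $x$ is $T^k(x)$ computed from the corresponding expansion of $x$.

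For $k<n$ with the first expansion, the limit is $[a_{k+1},\dots,a_n]=T^k(x)$. For $k<n$ with the second expansion, the limit is $[a_{k+1},\dots,a_n-1,1]$, which is the same number $T^k(x)$. The remaining index $k=n$ under the second expansion gives the limit $[1]=1$. That index corresponds to $T^n(x_m)$, which tends to $1$.

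This last point is the main obstacle. If infinitely many $x_m$ lie on the second side, then $1=\lim_m T^n(x_m)\le\lim_m x_m=x$, so $x=1$. But $1\in\mathcal{E}$ trivially, so no contradiction arises and nothing is lost.

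Hence for every $k<n$, along a subsequence $T^k(x)=\lim_m T^k(x_m)\le \lim_m x_m=x$, so $x\in\mathcal{E}$.

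An alternative, cleaner route avoids the cylinder analysis. Write $\mathcal{E}=\{1\}\cup\bigcup_N\bigl(\mathcal{E}\cap[0,1-\tfrac1N]\bigr)$ and use $\mathcal{E}\cap[0,\alpha]\subset E(\alpha)$, with $E(\alpha)$ compact by Proposition~\ref{propbj}. One would then need the case $x_m\to x$ with $x<1$, where all $x_m\le \alpha<1$ eventually for some $\alpha$, so the $x_m$ and $x$ lie in the compact set $E(\alpha)$. That observation alone does not give $T^k(x)\le x$, however. So I would rely on the direct argument above, mirroring the one-sided continuity reasoning from Lemma~\ref{lembj}.
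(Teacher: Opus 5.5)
Your argument is correct and is essentially the paper's: both pass to the limit in $T^k(x_m)\le x_m$ using continuity of $T^k$ at $x$, which holds whenever $T^k(x)>0$, while the remaining indices are trivial because there $T^k(x)=0\le x$. The paper streamlines this by arguing by contradiction, since $T^{k_0}(x)>x>0$ already forces $x,\dots,T^{k_0-1}(x)$ to avoid the discontinuities $0$ and $1/m$ of $T$; this makes unnecessary your irrational/rational split, your one-sided cylinder analysis (whose two limits you show coincide for $k<n$, i.e.\ plain continuity), and your discussion of the index $k=n$.
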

\begin{proof}
  Let $\{x_n\}$ be a sequence in $\mathcal{E}$ such that $\lim\limits_{n\to +\infty}x_n=x$. Now, we prove that $x\in \mathcal{E}$. Since $0\in\mathcal{E}$, without loss of generality, we suppose $x>0$. We prove $x\in \mathcal{E}$ by contradiction. Suppose $x\notin\mathcal{E}$. Then, there exists some $k_0\in \mathbb{N}$ such that $T^{k_0}(x)>x>0$. Hence,
  \[x=[a_1,\cdots,a_{k_0},a_{k_0+1},\cdots]\quad\text{with}\quad a_{k_0+1}\ge 1.\]
  Let $x_n=[a_1^{(n)},\cdots]$. Since $\lim\limits_{n\to +\infty}x_n=x$, then for $n$ large enough, we have $a_i=a_i^{(n)}$ for all $1\le i\le k_0$. Hence, for $n$ large enough, we have 
  \[T^k(x_n)\in \left(\frac{1}{a_{k+1}+1},\frac{1}{a_{k+1}}\right)\quad\text{for all}\quad 0\le k\le k_0-1.\]
  Since $x\in \left(\frac{1}{a_1+1},\frac{1}{a_1}\right)$ and $x_n\in \left(\frac{1}{a_1+1},\frac{1}{a_1}\right)$ for $n$ large enough, by noting that $T$ is continuous on $\left(\frac{1}{a_1+1},\frac{1}{a_1}\right)$, then 
  \[\lim_{n\to +\infty}T(x_n)=T(x).\]
  Since $T(x)\in \left(\frac{1}{a_2+1},\frac{1}{a_2}\right)$ and $T(x_n)\in \left(\frac{1}{a_2+1},\frac{1}{a_2}\right)$ for $n$ large enough, by noting that $T$ is continuous on $\left(\frac{1}{a_2+1},\frac{1}{a_2}\right)$, then 
  \[\lim_{n\to +\infty}T^2(x_n)=\lim_{n\to +\infty}T(T(x_n))=T(T(x))=T^2(x).\]
  Repeating this process, we have $\lim_{n\to +\infty}T^{k_0}(x_n)=T^{k_0}(x)$. Since $x_n\in \mathcal{E}$, then
  \[x=\lim_{n\to +\infty}x_n\ge \lim_{n\to +\infty}T^{k_0}(x_n)=T^{k_0}(x)>x.\]
  This leads to a contradiction. Hence, the set $\mathcal{E}$ is closed, which completes the proof.
\end{proof}
\begin{proof}[Proof of Theorem \ref{bifurcation set}]
  Denote the bifurcation set for the set-valued function $\alpha\mapsto E(\alpha)$ by $\mathcal{B}$. First, we prove 
  $\mathcal{E}\subset \mathcal{B}$. For any $x_0\in \mathcal{E}\setminus \{0\}$, we have 
  \[E(x_0)=\{x\in [0,1]:  T^k(x)\le x_0,\ \forall k\ge 0\}\supset \{x_0\}\sqcup \{x\in [0,1]:  T^k(x)< x_0,\ \forall k\ge 0\}.\]
  Hence, for any $0<\delta\le x_0$,
  \[E(x_0-\delta)\subsetneqq E(x_0).\]
  By definition of $\mathcal{B}$, we have $x_0\in\mathcal{B}$ for all $x_0>0$. It can be checked that $0\in \mathcal{B}$.
  \par
  Now, we prove $\mathcal{B}\subset \mathcal{E}$. For any $x_0\in [0,1]\setminus \mathcal{E}$, we prove $x_0\in [0,1]\setminus \mathcal{B}$. By Proposition \ref{bfca}, the set $(0,1)\setminus \mathcal{E}$ is open. Since $0, 1\in \mathcal{E}$, then there exist intervals $(a_i,b_i)\subset (0,1)$ such that
  \[[0,1]\setminus \mathcal{E}=(0,1)\setminus \mathcal{E}=\bigcup_{i=1}^{+\infty} (a_i,b_i).\]
  Hence, there exists some $i_0$ such that $x_0\in (a_{i_0},b_{i_0})$. By Proposition \ref{propbb1}, we have $E(\alpha)=E(x_0)$ for all $\alpha\in (a_{i_0},b_{i_0})$. This implies $x_0\in [0,1]\setminus \mathcal{B}$. Hence, we have $\mathcal{B}=\mathcal{E}$.
  \par
  Now, we prove $\dim_H(\mathcal{E})=1$. Denote
  \[\bigcup_{m=2}^{+\infty}\left\{x=[1,m,x_3,\cdots]:x_i\le m-1,\ \forall i\ge 3\right\}\] 
  by $C$. For any $x\in C$, we have $T(x)=[m,x_3,x_4,\cdots]<x$ and $T^k(x)=[x_{k+1},\cdots]$ for any $k\ge 2$. If $x_{k+1}\ge 2$, then $T^k(x)<x$. If $x_{k+1}=1$, then by noting that $x_{k+2}\le m-1$, we have $T^k(x)<x$. Hence, we have 
  $C\subset \mathcal{E}$. Since 
  \[\dim_H(C)=\underset{m\ge 2}{\sup}\dim_H\left(\left\{x=[1,m,x_3,\cdots]:x_i\le m-1,\ \forall i\ge 3\right\}\right)=1,\]
  then $\dim_H(\mathcal{E})=1$.
\end{proof}
\section{Hausdorff \text{dim}ension of $E(\alpha)$ for $\alpha\in \mathbb{Q}$}
In this section, we describe the Hausdorff dimension of $E(\alpha)$ for all $\alpha\in [0,1]\cap \mathbb{Q}$.
\begin{proposition}\label{wsgs1}
  For any $\alpha\in\mathbb{Q}\setminus \{0\}$, we have 
  \begin{align}\label{wsgs2}
    \dim_H(E(\alpha))=\inf\left\{t\ge 0: P(t,\alpha)\le 0\right\},
  \end{align}
  where
  \[P(t,\alpha)=\lim_{n\to +\infty}\frac{1}{n}\log \underset{\omega\in \mathcal{D}_n(\alpha)}{\sum}q_{\omega}^{-2t}.\]
  For $\alpha=0$, we have $\dim_H(E(\alpha))=0$.
\end{proposition}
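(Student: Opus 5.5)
The plan is to realize $E(\alpha)$, for rational $\alpha\in(0,1]$, as the limit set $E(A)$ of a finitely primitive graph directed Markov system built from the Gauss inverse branches, and then to apply Lemma \ref{wsdl}. The case $\alpha=0$ is immediate. We have $E(0)=\{x: T^k(x)=0\ \forall k\}=\{0\}$, since $T^0(x)=x\le 0$ forces $x=0$. Hence $\dim_H(E(0))=0$. For $\alpha=1$ we have $E(1)=[0,1]$ and $\mathcal{D}_n(1)=\mathbb{N}^n$. Then $P(t,1)$ is the usual Gauss pressure, and \eqref{wsgs2} gives $1$, so this case is classical. Henceforth fix $\alpha=[a_1,\dots,a_m]\in(0,1)\cap\mathbb{Q}$.

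\textbf{Step 1: $E(\alpha)$ is a subshift of finite type.} Using the comparison lemma for continued fractions, I would show that an irrational $x=[x_1,x_2,\dots]$ lies in $E(\alpha)$ if and only if every shifted tail $(x_{k+1},x_{k+2},\dots)$ is $\le(a_1,\dots,a_m)$ in the alternating order. Because $\alpha$ has a finite expansion, whether a tail exceeds $\alpha$ is decided by the first $m+1$ digits of that tail (the extra digit handles the tie with the two expansions of $\alpha$). Therefore the irrational part of $E(\alpha)$ is exactly the set of sequences all of whose length-$(m+1)$ subwords lie in a fixed collection $W\subset\mathbb{N}^{m+1}$. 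I would take the alphabet $I=W$, and let the incidence matrix $A$ allow $u\to v$ when $v$ is the shift of $u$ with one new letter appended. Then $E(A)$ differs from $E(\alpha)$ only by countably many rational points, which does not affect Hausdorff dimension.

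\textbf{Step 2: identify the pressures.} Admissible words in $E_A^n$ correspond bijectively to words in $\mathcal{D}_{n+m}(\alpha)$. With $q_\omega$ attached to the underlying digit string, bounded distortion ($q_{\omega\tau}\asymp q_\omega q_\tau$) shows that the two sums differ by factors that are at most polynomial in the first few digits. This is harmless after taking $\frac1n\log$, provided one is careful with the infinitely many first letters. Concretely, I would compare $\sum_{E_A^n}$ with $\sum_{\mathcal{D}_{n+m}(\alpha)}$ using submultiplicativity. Hence the pressure of the GDMS coincides with $P(t,\alpha)$.

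\textbf{Step 3: finite primitivity, the main obstacle.} I must exhibit a finite set $\Lambda$ of connecting words of equal length. The natural connector is a string of large digits. Any digit $\ge a_1+1$, say $N=a_1+1$, gives a tail $[N,\dots]<\alpha$ regardless of what follows. Also, a tail starting with a large digit never creates a violation at earlier positions unless the earlier word ends in a way that forces a comparison. I would argue that inserting $N^{m+1}$ between any two states $u,v\in W$ produces an admissible word. Every window straddling the junction either starts inside $u$ and reaches a digit $N>a_j$ at some position where the order forces it below $\alpha$, or starts at $N$ and is below $\alpha$. The delicate windows are those beginning inside $u$ that agree with $\alpha$'s prefix up to where $N$ appears. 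If the parity is wrong there, the larger digit makes the tail larger. To repair this, I would allow $\Lambda$ to contain a few variants, for instance $N^{m+1}$ and $1N^{m}$ with lengths equalized by padding, and do a case check on the parity of the agreement length. If this case analysis fails, I would restrict to a finitely primitive sub-alphabet. This still has full pressure by an exhaustion argument, since adding a fixed finite connector changes $\sum q_\omega^{-2t}$ only by a bounded factor.

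With finite primitivity in hand, Lemma \ref{wsdl} yields $\dim_H(E(\alpha))=\dim_H(E(A))=\inf\{t\ge0:P(t,\alpha)\le0\}$, which is \eqref{wsgs2}.
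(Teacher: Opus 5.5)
\textbf{Step 3 is a genuine gap.} Your cases $\alpha\in\{0,1\}$ are fine, and so is the finite-type description in Step 1, once $W$ is pruned to windows that actually occur in $E(\alpha)\setminus\mathbb{Q}$. The problem in Step 3 is not just an unfinished parity check. For some rational $\alpha$ the coding of $E(\alpha)\setminus\mathbb{Q}$ is not finitely primitive, not even irreducible, so ``with finite primitivity in hand'' is never reached.

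Take $\alpha=[2,3]=3/7$. Every digit of a point of $E(3/7)$ is at least $2$. If an irrational $x\in E(3/7)$ has $a_i(x)=2$, then $1/(2+T^ix)=T^{i-1}x\le 1/(2+1/3)$ forces $T^ix>1/3$. So $a_{i+1}(x)\le 2$, hence $a_{i+1}(x)=2$. After the first $2$ the orbit is therefore stuck at $[\overline{2}]=\sqrt2-1<3/7$. Thus $E(3/7)\setminus\mathbb{Q}$ is the full shift on $\{3,4,\ldots\}$ together with countably many points ending in $\overline{2}$.

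A state realised by $[\overline{2}]$ has no admissible path to a state containing a digit $\ge 3$. Your connectors produce tails $[2,3,\ldots]>\alpha$ or $[1,\ldots]>\alpha$, and so does every other word.

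The fallback does not rescue this, for two reasons.
\begin{enumerate}
\item The exhaustion argument (``a fixed finite connector changes $\sum q_\omega^{-2t}$ by a bounded factor'') presupposes connectors between arbitrary letters. That is exactly what fails between the two pieces.
\item The claimed full pressure is false. Since $(2,\ldots,2)\in\mathcal{D}_n(3/7)$, we get $P(t,3/7)\ge -2t\log(1+\sqrt2)$, which is about $-3.53$ at $t=2$. The pressure of the full shift on $\{3,4,\ldots\}$ at $t=2$ is at most $\log\sum_{a\ge3}a^{-4}\approx-3.92$.
\end{enumerate}

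\textbf{What is missing} is an argument that tolerates reducible codings.

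One route goes through the irreducible pieces of the pruned coding. One checks that there are finitely many of them: one piece contains all words in large digits and is finitely primitive, and the others use boundedly many digits. One also checks that transient passages have bounded length. Then $P(t,\alpha)=\max_iP_i(t)$ and $\dim_H E(\alpha)=\max_i\dim_H(\text{piece}_i)$. Each $P_i$ is nonincreasing, so $\inf\{t:\max_iP_i(t)\le 0\}=\max_i\inf\{t:P_i(t)\le0\}$. Applying Lemma \ref{wsdl} to each piece then gives \eqref{wsgs2}.

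The paper instead asserts primitivity only when $a_2\ge a_1+1$. For $\alpha=[N,a_2,\ldots]$ with $a_2\le N$ it uses $E(1/(N+1))\subset E(\alpha)\subset E([N,N+1])$. The outer two sets differ by a countable set, by the same trapping mechanism as above. It then squeezes the zero of $P(\cdot,\alpha)$ using Good's theorem and the monotonicity of $\alpha\mapsto P(t,\alpha)$.

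Note that the endpoint $[N,N+1]$ itself (e.g.\ $3/7$) still carries the trap $[\overline N]$. So even on the paper's route the max-over-pieces observation is needed there. It is easy, because the trap contributes pressure $-2t\log\frac{N+\sqrt{N^2+4}}{2}<0$.

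\textbf{Step 2 also needs repair.} Use the paper's non-overlapping blocks: letters in $\mathcal{D}_L(\alpha)$ with $L\ge m$, and $\omega\to\omega'$ iff $\omega\omega'\in\mathcal{D}_{2L}(\alpha)$. With overlapping windows and letter maps $\phi_{u_1}$, the open set condition fails. Moreover, the level-$n$ sum becomes $\sum_{w\in\mathcal{D}_{n+m}(\alpha)}q_{w_1\cdots w_n}^{-2t}=\infty$. The lost factor $q_{w_{n+1}\cdots w_{n+m}}^{2t}$ sits in the last digits, not the first, and it is not subexponential.
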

\begin{proof}
  Let $N\ge 1$ be a positive integer.
  Note that
  \[E\left(\frac{1}{N}\right)=\left\{x=[x_1,\cdots]: x_i\ge N,\ \forall i\ge 1 \right\}.\]
  By \cite{G41}, the equation \eqref{wsgs2} holds for $\alpha=\frac{1}{N}$. Hence, with out loss of generality, we can suppose $\alpha=[N,a_2,\cdots]$. Now, we distinguish two cases.
  \begin{enumerate}[i)]
    \item Suppose $\alpha=[N,a_2,\cdots,a_n]$ with $a_n\ge 2$ and $a_2\ge N+1$. By the definition of $\mathcal{D}_{n}(\alpha)$, we have
    \[E(\alpha)=\left\{x\in [0,1]: (a_k(x),a_{k+1}(x),\cdots, a_{k+n-1}(x))\in\mathcal{D}_{n}(\alpha),\ \forall k\ge 1\right\}.\]
    Now, we use the theories of Markov systems (see \cite{MU03}) to prove \eqref{wsgs2}. First, we recast the set $E(\alpha)$ in the language of graph-directed Markov system, such that we can use Lemmas \ref{ylhs} and \ref{wsdl}. Let $I=\{(a_1,\cdots,a_{n-1}): (a_1,\cdots,a_{n-1})\in\mathcal{D}_{n-1}(\alpha)\}.$ We also denote $I$ by $\{\omega_i\}_{i\ge 1}$ for convenience.
    Define the infinite incidence matrix $A$ by $a_{ij}=1$ if $\omega_i\omega_j\in\mathcal{D}_{2n-2}(\alpha)$ and $a_{ij}=0$ otherwise.
    \par
    Now, we prove that $A$ is finitely primitive by induction on the length $n$. Suppose $n=1$. Then $a_{ij}=1$ for all $i,j\ge 1$. Hence, the matrix $A$ is finitely primitive obviously. Suppose $n=2$. Since $a_2\ge N+1$, then for any $\omega_i,\ \omega_j\in I$, we have $[\omega_i,N+1,\cdots]\le \alpha$ and $\ [N+1, \omega_j\cdots]\le \alpha$. Hence, we have $\omega_i(N+1)\omega_j\in\mathcal{D}_3(\alpha)$ for all $\omega_i,\omega_j\in I$. Hence, the matrix $A$ is finitely primitive. Suppose $n=3$. We choose $\gamma_1=(a_2,c)$ with $c=\max\{a_2,a_3+1\}$ and $\gamma_2=(a_3+1,a_3+1)$. If $\omega_i=(c_1,c_2)\in I\setminus (N,a_2)$ and $\omega_j=(c_3,c_4)\in I$, then we distinguish two cases.
    \begin{enumerate}[1)]
      \item Suppose $\omega_i=(c_1,c_2)$ with $c_1\ge N+1$. Since $\omega_i\in I$, we have $c_2\ge N$. Hence, we have $[c_1,c_2,a_2,\cdots]\le \alpha$. Since $c\ge a_3+1$, then $\ [c_2,a_2,c,\cdots]\le \alpha$. Since $a_2\ge N+1$, then $[a_2,c,c_3,\cdots]\le \alpha$. Since $c\ge a_2\ge N+1$, then $[c,c_3,c_4,\cdots]\le \alpha$. Hence, we have $\omega_i\gamma_1\omega_j\in \mathcal{D}_6(\alpha)$.
      \item Suppose $\omega_i=(N,c_2)$ with $c_2\neq a_2$. Since $\omega_i\in I$, then $N\le c_2\le a_2-1$. By the same argument in the proof of $1)$, we also have $\omega_i\gamma_1\omega_j\in \mathcal{D}_6(\alpha)$.
    \end{enumerate}
    It can be checked that $(N,a_2)\gamma_2\omega_j$ is admissible for all $\omega_j\in I$. Hence, the matrix $A$ is finitely primitive.
    Now, we suppose $n\ge 4$. Fix any $\omega_0=(c_1,\cdots,c_{n-1})\in I$. Let $\gamma=(d_1,\cdots, d_{n-1})\in I$. We distinguish two cases.
    \begin{enumerate}[1)]
      \item Suppose $(c_k,\cdots,c_{n-1})\neq (a_1,\cdots,a_{n-k})$ for all $3\le k\le n-1$. If $(c_2,\cdots,c_{n-1})\neq (a_1,\cdots,a_{n-2})$, then take $d_1=\cdots=d_{n-1}=N+1$. Since $(c_k,\cdots,c_{n-1})\neq (a_1,\cdots,a_{n-k})$ for all $2\le k\le n-1$, then $[c_k,\cdots,N+1,\cdots,N+1,\cdots]\le\alpha$ for all $2\le k\le n-1$. Then we have $\omega_0\gamma \omega_1$ is admissible for all $\omega_1\in I$. If $(c_2,\cdots,c_{n-1})= (a_1,\cdots,a_{n-2})$, then we need only prove that for any $t_1\ge N$, we have $(c_2,\cdots,c_{n-1},d_1)$ and $(d_2,\cdots,t_1)$ are admissible. We distinguish two subcases.
      \begin{enumerate}[i)]
        \item If $n$ is odd, we claim that $a_{n-1}\ge N$. Suppose $a_{n-1}\le N-1$, then by Lemma \ref{zf}, we have $E(\alpha)=E([a_1,\cdots, a_{n-1}])$. Since $(c_1,\cdots ,c_{n-1})\in\mathcal{D}_{n}(\alpha)$, then $c_{n-1}\ge N>a_{n-1}$. Since $(c_2,\cdots, c_{n-2})=(a_1,\cdots,a_{n-2})$, then $(c_2,\cdots, c_{n-1})=(a_1,\cdots,a_{n-2},c_{n-1})$. Since $n-1$ is even, then $[c_2,\cdots,c_{n-1},\cdots]>[a_1,\cdots,a_{n-1}]$. This leads to a contradiction. Hence, we choose $d_1=\min \{a_{n-1},N+1\}$. Take $\gamma=(d_1,N+1,d_3,\cdots,d_{n-1})\in I$. Then, we complete the proof for this subcase.
        \item If $n$ is even, then we choose $d_1=a_{n-1}+1\ge N+1$. Take $\gamma=(d_1,N+1,\cdots,N+1)\in I$. Then, we complete the proof for this subcase.
      \end{enumerate}
      \item Suppose there exists some $k\ge 3$, such that $(c_k,\cdots,c_{n-1})= (a_1,\cdots,a_{n-k})$. Let $k_0$ be the smallest one. Consider $\alpha^{\prime}=(a_1,\cdots,a_{n-k_0+1})$. Since $n-k_0+1\le n-2$, by induction, there exists some $\gamma^{\prime}=(d_1,\cdots,d_{\ell})\in\mathcal{D}_{\ell}(\alpha^{\prime})=\mathcal{D}_{\ell}(\alpha)$ such that $(c_{k_0},\cdots,c_{n-1})\gamma \beta$ is admissible for all $\beta\in \mathcal{D}_{\ell}(\alpha)$.
      Then we can choose $\gamma=(\gamma^{\prime})^{m}$ where $m$ is the least common multiple of $\ell$ and $n-1$. Since $(c_{k_0},\cdots,c_{n-1})\gamma \beta$ is admissible for all $\beta\in \mathcal{D}_{\ell}(\alpha)$, we have $(d_k,\cdots,d_{\ell})\neq (a_1,\cdots,a_{\ell-k-1})$ for all $k\ge 1$. Then by the same arguments in the case $1)$, we complete the proof.
    \end{enumerate}
    Combining the two cases above, we have that $A$ is finitely primitive. By Lemmma \ref{wsdl}, we complete the proof for $\alpha=[N,a_2,\cdots,a_n]$ with $a_n\ge 2$ and $a_2\ge N+1$.
    \item Suppose $\alpha=[N,a_2,\cdots,a_n]$ with $a_n\ge 2$ and $a_2\le N$. Since $E\left(\frac{1}{N+1}\right)\subset E(\alpha)\subset E([N,N+1])$, we need only prove $\dim_H\left(E([N,N+1])\right)=\dim_H\left(E\left(\frac{1}{N+1}\right)\right)$. Denote $[\overline{N}]=[N,N,\cdots]=\frac{-N+\sqrt{N^2+4}}{2}$. We first prove 
    \begin{align*}
      E([N,N+1])\subset E\left(\frac{1}{N+1}\right)\cup \mathbb{Q}\cup \bigcup_{n=1}^{+\infty}\bigg\{[a_1,\cdots, a_n,\overline{N}]: a_i\ge N,\ \forall i\bigg\}\cup \{[\overline{N}]\}
    \end{align*}
    by contradiction. Suppose there exists some $x\in E([N,N+1])$ but $x\notin E\left(\frac{1}{N+1}\right)\cup \mathbb{Q}\cup \{[\overline{N}]\}$.
    Since $x=[x_1,\cdots]\in E([N,N+1])$, if there exists $i_0\ge 1$ such that $x_{i_0}\le N-1$, then $T^{i_0-1}(x)>[N,N+1]$ which leads a contradiction. Hence, we have $x_{i}\ge N$ for all $i\ge 1$. Since $x\notin E\left(\frac{1}{N+1}\right)$, then there exists some $i_1$ such that $x_{i_1}=N$. Since $x\notin \mathbb{Q}\cup \bigcup_{n=1}^{+\infty}\bigg\{[a_1,\cdots, a_n,\overline{N}]: a_i\ge N,\ \forall i\bigg\}$, then there exists some $i_2$ such that $x_{i_2}=N$ and $x_{i_2+1}\ge N+1$. Therefore, we have $T^{i_2-1}(x)>[N,N+1]$ which leads a contradiction. Hence, we have 
    $\dim_H\left(E([N,N+1])\right)\le \dim_H\left(E\left(\frac{1}{N+1}\right)\right)$. Since the function $\alpha\mapsto E(\alpha)$ is increasing, we have $\dim_H(E(\alpha))=\dim_H(E([N,N+1]))$.
    Since $\alpha\mapsto P(t,\alpha)$ is increasing, then for any $t\ge 0$,
    \[P\left(t,\frac{1}{N+1}\right)\le P(t,\alpha)\le P(t,[N,N+1]).\] 
    This implies \eqref{wsgs2}.
  \end{enumerate}
  Combining the two cases above, we complete the proof for all $\alpha\in \mathbb{Q}\setminus \{0\}$. Since $E(0)=\{0\}$, then $\dim_H(E(0))=0$. This completes the proof.
\end{proof}
\section{The continuity of the function $\alpha\mapsto \dim_H(E(\alpha))$}
In this section, we denote the function $\alpha\mapsto \dim_H(E(\alpha))$ by $d$, i.e., 
\[d:[0,1]\rightarrow [0,1],\ \alpha\mapsto \dim_H(E(\alpha)).\]
Let $\phi_a(x)=1/(a+x)$, and for a word $w=(a_1,\ldots,a_n)$ write
$\phi_w=\phi_{a_1}\circ\cdots\circ\phi_{a_n}$ and
$I_w=\phi_w((0,1))$. With $F_1=F_2=1$ the Fibonacci numbers, it can be checked that for any finite words $u$ and $v$,
\begin{equation}\label{basic:continuants}
 q_uq_v\le q_{uv}\le2q_uq_v,
 \qquad q_v\ge F_{n+1},\qquad q_v\ge\prod_{i=1}^n a_i.
\end{equation}
The inverse branch satisfies
\begin{equation}\label{basic:derivatives}
 \frac1{4q_w^2}\le |\phi_w'(x)|\le\frac1{q_w^2}.
\end{equation}
In particular the distortion is at most $4$, the derivative magnitude is monotone, and
\begin{equation}\label{basic:sum}
 \sum_{|w|=n}q_w^{-2t}\le
 \left(\sum_{a\ge1}a^{-2t}\right)^n<\infty\qquad(t>\tfrac12).
\end{equation}
For each fixed $n$, the sum over words with at least one digit greater than
$M$ therefore tends to zero as $M\to\infty$.\par
We use the following elementary finite-IFS principle. For finitely many contracting interval maps with disjoint image interiors, if $\sum_i(\inf|\psi_i'|)^t>1$, their limit set has dimension greater than $t$. Indeed the strict inequality persists at an exponent $t'>t$; the Bernoulli measure with weights proportional to $(\inf|\psi_i'|)^{t'}$ gives the usual mass-distribution lower bound. Below we apply this principle only to finite collections of continued-fraction inverse branches, for which \eqref{basic:derivatives} gives bounded distortion and the open set condition holds.
\begin{lemma}\label{basic:half}
For each $\alpha>0$, $d(\alpha)>1/2$.
\end{lemma}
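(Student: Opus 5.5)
Fix $\alpha>0$ and choose an integer $N$ with $1/N<\alpha$. Then $E(1/N)\subset E(\alpha)$ because $\alpha\mapsto E(\alpha)$ is increasing (Proposition \ref{aejc}). So it suffices to find, for each large $N$, a closed subset of $E(1/N)$ that is the limit set of a finite IFS of continued-fraction inverse branches and has dimension strictly greater than $1/2$. Recall
\[E(1/N)=\{[x_1,x_2,\cdots]: x_i\ge N\ \forall i\}\cup\{\text{rationals}\}.\]
Using the $n$-th digit rather than the first will not work: digit sets $\{N,\dots,M\}$ with a single letter per step give Good's classical result, $\dim\to 1/2^+$, but we need strict inequality for each fixed $N$. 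We therefore use the finite-IFS principle stated just before the lemma.

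For $M>N$, consider the finite family of maps $\psi_a=\phi_a$ with $N\le a\le M$. The maps have pairwise disjoint image interiors and their limit set lies in $E(1/N)$. Apply the principle with exponent $t=1/2$. By \eqref{basic:derivatives}, for a single digit $a$ we have $q_a=a$ and $\inf|\phi_a'|\ge 1/(4a^2)$, hence
\[\sum_{a=N}^{M}(\inf|\psi_a'|)^{1/2}\ge \sum_{a=N}^M \frac{1}{2a}\longrightarrow\infty \quad (M\to\infty).\]
So for $M$ large the sum exceeds $1$. The principle then gives that the limit set has dimension greater than $1/2$, and therefore $d(\alpha)\ge d(1/N)>1/2$.

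The distortion factor $4$ is the only point that needs care; it is harmless here, because the divergence of the harmonic series absorbs any constant. If a sharper form of the principle were required (for example if the distortion bound entered multiplicatively on iterates), I would instead use words of length $n$ over $\{N,\dots,M\}$ as single generators. By \eqref{basic:continuants} and \eqref{basic:derivatives} we have $\inf|\phi_w'|\ge 1/(4q_w^2)$ and $q_w\le 2^{n}\prod a_i$. This gives
\[\sum_{|w|=n}(\inf|\phi_w'|)^{1/2}\ge \tfrac12\Bigl(\tfrac12\sum_{a=N}^M a^{-1}\Bigr)^n,\]
which is still greater than $1$ once $\sum_{a=N}^M a^{-1}>2$. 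The argument therefore reduces to choosing $M$ large enough that the harmonic tail from $N$ to $M$ exceeds a fixed constant, which is possible for every $N$.
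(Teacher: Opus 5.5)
Your proof is correct, but it takes a different route from the paper's. Both arguments start from $E(1/N)\subset E(\alpha)$ for $1/N<\alpha$. The paper then identifies $E(1/M)$ with the set of points all of whose partial quotients are $\ge M$, and cites Good \cite{G41}, writing $d(1/M)>\lim_{M\to\infty}d(1/M)=1/2$. There the strict inequality is imported from the external result rather than argued. You instead exhibit an explicit compact subset of $E(1/N)$: the limit set of the finite system $\{\phi_a: N\le a\le M\}$. You apply the finite-IFS principle stated just before the lemma at exponent $t=1/2$, and the divergence of $\sum_a 1/(a+1)$ forces $\sum_{a=N}^M(\inf|\phi_a'|)^{1/2}>1$ once $M$ is large. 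This is self-contained and proves the strict inequality directly, which is the only nontrivial content of the lemma. Since $1/N<\alpha\le 1$ forces $N\ge 2$, every branch you use is a genuine contraction, as the principle requires. What the paper's citation gives in addition is the limit $d(1/M)\to 1/2$, which Corollary \ref{elx} (the jump to $1/2$ at $0$) needs, but this lemma does not.

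Three small blemishes, none affecting the argument:
\begin{enumerate}
\item $E(1/N)$ is not ``digits $\ge N$ together with all rationals''; only rationals with such expansions belong to it. You only use the inclusion of your limit set, so this is harmless.
\item In your fallback with length-$n$ words, $\sum_{a=N}^M a^{-1}>2$ makes $\tfrac12\bigl(\tfrac12\sum a^{-1}\bigr)^n$ exceed $1$ only for $n$ large, which you are free to choose.
\item The sentence about ``using the $n$-th digit rather than the first'' is garbled and can simply be dropped. The fallback is not needed either, because the principle as stated already works at the single-digit level.
\end{enumerate}
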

\begin{proof}
  For any $\alpha>0$, there exists some integer $M\ge 1$ such that $\alpha>\frac{1}{M}$. Hence, we have 
  \[d(\alpha)\ge d\left(\frac{1}{M}\right).\]
  On the other hand,
  \[E\left(\frac{1}{M}\right)=\{x=[x_1,\cdots]\in [0,1]: x_i\ge M,\ \forall i\ge 1\}.\]
  By \cite{G41}, we have 
  \[d\left(\frac{1}{M}\right)> \lim_{M\to +\infty}d\left(\{x=[x_1,\cdots]\in [0,1]: x_i\ge M,\ \forall i\ge 1\}\right)=\frac{1}{2}.\]
 This completes the proof.
\end{proof}
\subsection{A reference measure and the operator spaces}
For $t>1/2$, define the full and open transfer operators from $C([0,1])$ to itself by
\begin{equation*}%\label{op:definition}
 \begin{split}
 (\Lop_t f)(x)&=\sum_{a\ge1}|\phi^{\prime}_a(x)|^tf(\phi_a(x)),\\
 \Lop_{t,\alpha}f&=\Lop_t(\ind_{[0,\alpha]}f).
 \end{split}
\end{equation*}
For spectral perturbations it suffices to fix $t\in(1/2,1)$; exponents outside this range will not be needed in the continuity argument.
\begin{lemma}\label{op:measure}
For $1/2<t<1$ there is an atomless, full-support probability measure $\nu_t$
and a number $\lambda_t>1$ such that
$\Lop_t^*\nu_t=\lambda_t\nu_t$.
\end{lemma}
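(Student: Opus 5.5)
We will obtain $\nu_t$ as the $t$-conformal measure of the full Gauss system, and get $\lambda_t>1$ from the fact that the pressure is positive at exponents $t<1$.

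\textbf{Setup and pressure.} Fix $t\in(1/2,1)$. By \eqref{basic:derivatives} and \eqref{basic:sum}, for $f\ge0$,
\[\|\Lop_t^n f\|_\infty\le \|f\|_\infty\sum_{|w|=n}q_w^{-2t}<\infty,\]
so $\Lop_t$ is a bounded positive operator on $C([0,1])$. Continuity of $\Lop_t f$ follows from uniform convergence of the series together with continuity of each term $|\phi_a'|^t f\circ\phi_a$. Set $P(t)=\lim_n\frac1n\log\sum_{|w|=n}q_w^{-2t}$; by Lemma \ref{ylhs} this is the pressure of the full-shift system, which is finitely primitive. We need $P(t)>0$. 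By Lemma \ref{ylhs} and Lemma \ref{wsdl} applied to $E(A)=[0,1]\setminus\mathbb Q$, which has dimension $1$, we get $\inf\{s:P(s)\le0\}=1$, hence $P(t)>0$ for $t<1$. We set $\lambda_t=e^{P(t)}>1$. A direct alternative uses $\sum_{|w|=n}q_w^{-2}\asymp1$, since the cylinders of length $n$ partition $[0,1]$, and compares $q_w^{-2t}\ge q_w^{-2}\cdot(\min_{|w|=n}q_w)^{2-2t}$. This estimate is weak, so we keep the pressure argument.

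\textbf{Construction of $\nu_t$.} We follow the standard Patterson–Sullivan / Schauder–Tychonoff scheme of \cite{MU03}, Theorem 3.2.3 for finitely primitive systems. Consider the compact convex set $\mathcal M$ of Borel probability measures on $[0,1]$ with the weak* topology, and the map
\[\nu\mapsto \frac{\Lop_t^*\nu}{(\Lop_t^*\nu)(1)}.\]
This map is continuous because $\Lop_t 1$ is continuous and bounded below by $|\phi_1'|^t\ge 4^{-t}$. Schauder–Tychonoff then gives a fixed point $\nu_t$ with $\Lop_t^*\nu_t=\lambda\nu_t$ and $\lambda=\int\Lop_t1\,d\nu_t$.

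\textbf{Main obstacle: $\lambda=e^{P(t)}$.} This identification is where the infinite alphabet bites. A fixed point on $[0,1]$ could a priori charge the accumulation point $0$ or endpoints of cylinders. Iterating $\Lop_t^{*n}\nu_t=\lambda^n\nu_t$ and using bounded distortion \eqref{basic:derivatives} gives
\[\lambda^n\asymp\sum_{|w|=n}q_w^{-2t}\,\nu_t(\overline{\text{range}}),\]
so $\lambda=e^{P(t)}$ provided no mass escapes. To guarantee this, we would rather build $\nu_t$ by the finite-truncation limit. Take the alphabets $\{1,\dots,M\}$ and their conformal measures $\nu_{t,M}$ with eigenvalues $\lambda_{t,M}\uparrow e^{P(t)}$. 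Then take a weak* limit, using tightness away from $0$: the mass of $[0,1/K]$ is bounded by $\lambda_{t,M}^{-1}\sum_{a\ge K}a^{-2t}$, which is uniformly small since $t>1/2$ and $\lambda_{t,M}\ge\lambda_{t,1}>0$ stays bounded below. The limit then satisfies the eigen-equation with $\lambda_t=e^{P(t)}$.

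\textbf{Atomless and full support.} For full support, each cylinder $I_w$ gets mass
\[\nu_t(I_w)=\lambda_t^{-|w|}\int|\phi_w'|^t\,d\nu_t\ge\lambda_t^{-|w|}(4q_w^2)^{-t}>0,\]
and cylinders generate the topology. For atomlessness, an atom $x$ satisfies $\nu_t(\{x\})\le\nu_t(\overline{I_{w_n(x)}})\le\lambda_t^{-n}q_w^{-2t}\to0$ for irrational $x$. For rational points and $0$ we use the same estimate along the two expansions, plus the tightness bound near $0$ for the point $0$ itself.
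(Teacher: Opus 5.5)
Your proof is correct, but it reaches $\lambda_t>1$ by a heavier route than the paper.

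\textbf{The paper's route.} The paper takes the Schauder fixed point of $\mu\mapsto\Lop_t^*\mu/\int\Lop_t\ind\,d\mu$ directly and never identifies $\lambda_t$ with $e^{P(t)}$. It compares with the Gauss density $h(x)=1/(1+x)$, which satisfies $\Lop_1h=h$. Every two-letter branch has $|\phi_w'|\le 1/4$, so $|\phi_w'|^t\ge 4^{1-t}|\phi_w'|$ and hence $\Lop_t^2h\ge 4^{1-t}h$. Integrating against $\nu_t$ gives $\lambda_t\ge 2^{1-t}$ for \emph{any} eigenmeasure.

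\textbf{Your route.} You invoke Bowen's formula from \cite{MU03} with $\dim_H([0,1]\setminus\mathbb Q)=1$, and a finite-alphabet approximation $\lambda_{t,M}\uparrow e^{P(t)}$. This buys the exact value $\lambda_t=e^{P(t)}$, which the lemma does not need, at the cost of importing external machinery.

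\textbf{Both detours can be dropped.} First, the ``main obstacle'' is not real. Every $\phi_w$ is defined on all of $[0,1]$ with distortion at most $4$. Writing $Z_n(t)=\sum_{|w|=n}q_w^{-2t}$, this gives $4^{-t}Z_n(t)\le\Lop_t^n\ind(x)\le Z_n(t)$ for every $x\in[0,1]$. Integrating against any Schauder fixed point gives $4^{-t}Z_n(t)\le\lambda^n\le Z_n(t)$, so $\lambda=e^{P(t)}$. No $\nu(\text{range})$ factor appears, and nothing can escape on the compact space $[0,1]$.

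Second, the ``direct alternative'' you discarded is not weak. Since $\sum_{|w|=n}|I_w|=1$ and $|I_w|=\int_0^1|\phi_w'|\in[\tfrac14 q_w^{-2},q_w^{-2}]$, you get $1\le Z_n(1)\le 4$. Together with $q_w\ge F_{n+1}$ this yields $Z_n(t)\ge F_{n+1}^{2(1-t)}$, hence $P(t)\ge 2(1-t)\log\varphi>0$. This is the paper's comparison run over $n$ steps instead of two.

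\textbf{One point to tighten: atoms at rational points.} For a rational $x$ the nested-cylinder estimate does not apply. Such an $x$ lies in no closed cylinder of sufficiently high level, since it is the limit of the cylinders $\phi_w(I_k)$ as $k\to\infty$. Use the eigen-relation instead, extended to bounded Borel functions. If $n$ is the length of the expansion of $x$, the only pairs $(u,y)$ with $|u|=n$ and $\phi_u(y)=x$ have $y\in\{0,1\}$. Hence
\[
\lambda_t^n\nu_t(\{x\})\le\nu_t(\{0\})+\nu_t(\{1\}).
\]
Moreover $\lambda_t\nu_t(\{1\})=\nu_t(\{0\})$, and $\lambda_t\nu_t(\{0\})=\int\Lop_t\ind_{\{0\}}\,d\nu_t=0$ because no branch takes the value $0$. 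So you do not need the tightness bound near $0$ at all; this is exactly how the paper handles atoms.
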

\begin{proof}
  Since 
  \[\Lop_t\ind=\sum_{a\ge 1}(x+a)^{-2t}\ge 2^{-2t},\]
  then we can define a map 
  \[\mu\longmapsto\frac{\Lop_t^*\mu}{\int\Lop_t\ind\,d\mu}\]
  on the compact convex set of probability measures. This map is weak-star continuous and has a fixed point by the Schauder fixed point theorem. Let $\nu_t$ be the fix point, i.e.,
  \[\nu_t=\frac{\Lop_t^*\nu_t}{\int\Lop_t\ind\,d\nu_t}.\]
  Denote $\int\Lop_t\ind\,d\nu_t$ by $\lambda_t$. Then, we have $\Lop_t^*\nu_t=\lambda_t\nu_t$.\par
  Let $h(x)=1/(1+x)$. It can be checked that $\Lop_1h=h$.
  By \eqref{basic:derivatives}, every two-digit inverse branch has derivative magnitude at most $1/4$, thus
  \begin{align*}
    \Lop_t^2h(x) & = \Lop_t\left(\sum_{a\ge1}|\phi^{\prime}_a(x)|^th(\phi_a(x))\right)\\
    & = \sum_{a,b\ge1}|(\phi_b\circ \phi_a)^{\prime}(x)|^th(\phi_b\circ \phi_a(x))\\
    & \ge 4^{1-t}\sum_{a,b\ge1}|(\phi_b\circ \phi_a)^{\prime}(x)|h(\phi_b\circ \phi_a(x))\\
    & = 4^{1-t} \Lop_1^2h(x).
  \end{align*}
  Since $\Lop_t^*\nu_t=\lambda_t\nu_t$, we have 
  \[\int \Lop_t^2h(x)\,d\nu_t=\lambda_t^2\int h(x)\,d\nu_t.\]
  On the other hand 
  \[\int \Lop_t^2h(x)\,d\nu_t\ge 4^{1-t}\int \Lop_1^2h(x)\,d\nu_t=4^{1-t}\int h(x)\,d\nu_t.\]
  Hence, we have $\lambda_t\ge 2^{1-t}>1$. \par
  Since the eigenmeasure relation can be extended to bounded Borel functions. We first choose $f=\ind_{\{0\}}$. Since no inverse branch takes the value $0$, then
  \[\Lop_t\ind_{\{0\}}(x)=0.\]
  Hence, we have $\nu_t(\{0\})=0$.
  For other $y>0$, we have 
  \begin{align}\label{yuanzi}
    \lambda_t\nu_t(\{y\}) = \int \Lop_t\ind_{\{y\}}\,d\nu_t
     = y^{2t}\sum_{\substack{a\ge 1\\ y^{-1}-a\in [0,1]}}\nu_t\left(\left\{\frac{1}{y}-a\right\}\right).
  \end{align}
  If $y=1$, by \eqref{yuanzi}, we have $\nu_t(\{1\})=\nu_t(\{0\})=0$. If $y=\frac{1}{m}$ for some $m\ge 2$, by \eqref{yuanzi}, we have 
  $\nu_t(\{m^{-1}\})=m^{-2t}(\nu_t(\{0\})+\nu_t(\{1\}))=0$.
  For other $y$, by \eqref{yuanzi}, we have
  \[\lambda_t\nu_t(\{y\})=y^{2t}\nu_t(\{Ty\}).\]
  Hence,
\[
 \lambda_t^n\nu_t(\{y\})=\prod_{j=0}^{n}(T^jy)^{2t}\nu_t(\{T^ny\})=|(T^n)'(y)|^{-t}\nu_t(\{T^ny\})\le 1.
\]
Since $\lambda_t>1$, we have $\nu_t(\{y\})=0$. Therefore, the measure $\nu_t$ is atomless.\par
Finally, for any nonempty open interval $U$, choose $w$ with $I_w\subset U$. Then, $\Lop_t^{|w|}\ind_U$ is strictly positive on $(0,1)$ by \eqref{basic:derivatives}; thus $\nu_t(U)>0$.
\end{proof}
Use the usual space of bounded-variation functions modulo Lebesgue almost-everywhere equality, with essential variation and canonical one-sided representatives, and equip it with the norms
\[
 |f|_t=\int|f|\,d\nu_t,
 \qquad \|f\|_t=\Var(f)+|f|_t.
\]
These strong norms are equivalent to $\Var(f)+\|f\|_\infty$:
$\|f\|_\infty\le\Var(f)+|f|_t$. Essential variation and canonical one-sided representatives remove point-value ambiguities. Since $\nu_t$ is atomless and has full support, the equivalence class and the operator are well-defined. Helly selection and dominated convergence show that the
strong unit ball is compact in the weak norm $|\cdot|_t$.
\subsection{Uniform spectral stability with countably many branches}
\begin{lemma}\label{op:LY}
For each fixed $t\in(1/2,1)$ there are $n_0\ge1$, $0<\eta<1$, and $C<\infty$
such that, uniformly in $\alpha\in[0,1]$,
\begin{align}
 |\Lop_{t,\alpha}^{\,k}f|_t&\le\lambda_t^k|f|_t\qquad(k\ge0),
 \label{op:weak}\\
 |(\Lop_{t,\alpha}-\Lop_{t,\beta})f|_t
 &\le\lambda_t\nu_t((\min\{\alpha,\beta\},\max\{\alpha,\beta\}])\|f\|_t.\label{op:perturb}
\end{align}
Moreover,
\begin{equation}\label{op:LYestimate}
 \|\Lop_{t,\alpha}^{\,n_0}f\|_t\le\eta\|f\|_t+C|f|_t.
\end{equation}
\end{lemma}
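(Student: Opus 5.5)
The weak bounds \eqref{op:weak} and \eqref{op:perturb} will come straight from the eigenmeasure relation, and the Lasota--Yorke inequality \eqref{op:LYestimate} from a branch-by-branch variation estimate in which the infinitely many branches are controlled by the $t>1/2$ summability \eqref{basic:sum}.

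\textbf{Weak bounds.} The relation $\Lop_t^*\nu_t=\lambda_t\nu_t$ extends to bounded Borel functions. Since $\Lop_t$ is a positive operator,
\[
|\Lop_{t,\alpha}f|\le \Lop_t(\ind_{[0,\alpha]}|f|)\le \Lop_t|f| .
\]
Integrating against $\nu_t$ gives $|\Lop_{t,\alpha}f|_t\le\lambda_t|f|_t$, and iterating yields \eqref{op:weak}. For \eqref{op:perturb}, note that
\[
(\Lop_{t,\alpha}-\Lop_{t,\beta})f=\pm\Lop_t(\ind_{J}f),\qquad J=(\min\{\alpha,\beta\},\max\{\alpha,\beta\}] .
\]
Then
\[
|(\Lop_{t,\alpha}-\Lop_{t,\beta})f|_t\le\lambda_t\int_J|f|\,d\nu_t\le\lambda_t\nu_t(J)\|f\|_\infty .
\]
We finish with $\|f\|_\infty\le\|f\|_t$. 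Because $\nu_t$ is atomless, the choice of representative of $f$ on $J$ does not matter.

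\textbf{Lasota--Yorke inequality.} Write
\[
\Lop_{t,\alpha}^{n}f=\sum_{|w|=n}g_w\cdot(\ind_{A_{w,\alpha}}f)\circ\phi_w,\qquad g_w=|\phi_w'|^t .
\]
Here $A_{w,\alpha}$ is the set of $x$ for which all intermediate points $\phi_{w_{i}\cdots w_n}(x)$ lie in $[0,\alpha]$. Each $\phi_v$ is monotone, so $A_{w,\alpha}$ is an interval. For each branch we use
\[
\Var(g_w\,h\circ\phi_w\ind_{A})\le \|g_w\|_\infty\Var_{\phi_w(A)}(h)+\Var(g_w)\sup|h|+2\|g_w\|_\infty\sup_{\phi_w(A)}|h| .
\]
The last term accounts for the two endpoint jumps of the indicator, and $h=f$ restricted to the image interval $I_w$. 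By \eqref{basic:derivatives}, $\|g_w\|_\infty\le q_w^{-2t}$ and $\Var(g_w)\le Cq_w^{-2t}$, since $g_w$ is monotone with bounded distortion. The image intervals $I_w$ are disjoint, so the first terms sum to at most
\[
\sup_{|w|=n}q_w^{-2t}\,\Var(f)\le F_{n+1}^{-2t}\Var(f) .
\]
For the sup terms, we bound
\[
\sup_{I_w}|f|\le \Var_{I_w}(f)+\frac{1}{\nu_t(I_w)}\int_{I_w}|f|\,d\nu_t .
\]
This splits them into a variation part and a weak-norm part.

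\textbf{Main obstacle.} The summation over infinitely many branches is the difficult step. The variation part is at most $(C+2)\sup_{|w|=n}q_w^{-2t}\Var(f)$, again using disjointness of the $I_w$. The weak part is not finitely controllable, because $\nu_t(I_w)$ is tiny for large digits. To handle it, I would split at a digit cutoff $M$. For words with some digit $>M$ I would use instead
\[
\sup|f|\le\Var(f)+|f|_t .
\]
By \eqref{basic:sum} and the remark after it, the total weight $\sum q_w^{-2t}$ over such words is $\le\epsilon_M\to0$. For the finitely many words with all digits $\le M$, $\min\nu_t(I_w)>0$ by full support, which gives a constant $C_{n,M}$ in front of $|f|_t$. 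I would choose $n=n_0$ so that $(C+3)F_{n_0+1}^{-2t}<1/4$, then $M$ so that $(C+3)\epsilon_M<1/4$. This yields
\[
\Var(\Lop_{t,\alpha}^{n_0}f)\le\tfrac12\Var(f)+C'|f|_t .
\]
Adding $|\Lop_{t,\alpha}^{n_0}f|_t\le\lambda_t^{n_0}|f|_t$ gives \eqref{op:LYestimate} with $\eta=1/2$. All constants are independent of $\alpha$, since $\alpha$ enters only through the indicators, whose jumps were already counted uniformly.
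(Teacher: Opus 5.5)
Your proposal is correct and follows essentially the same argument as the paper. You use positivity of $\Lop_t$ plus the eigenmeasure relation for the weak and perturbation bounds. For the Lasota--Yorke estimate you use the same ingredients:
\begin{itemize}
\item the branchwise variation bound, with at most two truncation jumps per cylinder;
\item $\sup_{I_w}|f|\le\Var_{I_w}(f)+\nu_t(I_w)^{-1}\int_{I_w}|f|\,d\nu_t$ on the finitely many cylinders with digits $\le M$, using full support of $\nu_t$;
\item the crude bound $\sup|f|\le\Var(f)+|f|_t$ on the summable tail, valid since $t>1/2$;
\item choosing $n_0$ first and then $M$.
\end{itemize}
The only difference is cosmetic: you end with $\tfrac12\Var(f)$ rather than the paper's $\tfrac12\|f\|_t$, which does not affect the conclusion.
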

\begin{proof}
  First, we prove the weak norm estimate \eqref{op:weak}. Since the weights in the sum are all positive, we have
  \begin{equation*}
    |\mathcal{L}_{t,\alpha} f| \leq \mathcal{L}_{t,\alpha} |f| = \mathcal{L}_{t}\!\left(\mathbf{1}_{[0,\alpha]} |f|\right) \leq \mathcal{L}_{t} |f|.
  \end{equation*}
  Integrating with respect to $\nu_{t}$ and using the eigenmeasure relation, we have 
  \begin{equation*}
    |\mathcal{L}_{t,\alpha} f|_{t} \leq \int \mathcal{L}_{t} |f| \, d\nu_{t} = \lambda_{t} \int |f| \, d\nu_{t} = \lambda_{t} |f|_{t}.
  \end{equation*}
  Iterating this one-step estimate yields the conclusion for arbitrary $k$.\par
  Now, we prove \eqref{op:perturb}. Assume that $\alpha \leq \beta$. By definition,
  \begin{equation*}
    (\mathcal{L}_{t,\alpha} - \mathcal{L}_{t,\beta}) f = -\mathcal{L}_{t}\!\left(\mathbf{1}_{(\alpha,\beta]} f\right).
  \end{equation*}
  This shows that the difference between the two operators comes only from the newly added interval $(\alpha,\beta]$.
  Therefore,
  \begin{align*}
    |(\mathcal{L}_{t,\alpha} - \mathcal{L}_{t,\beta}) f|_{t} & \leq \int \mathcal{L}_{t}\!\left(\mathbf{1}_{(\alpha,\beta]} |f|\right) d\nu_{t} \\
    & = \lambda_{t} \int_{(\alpha,\beta]} |f| \, d\nu_{t} \\
    & \leq \lambda_{t} \nu_{t}\bigl((\alpha,\beta]\bigr) \|f\|_{\infty} \\
    & \leq \lambda_{t} \nu_{t}\bigl((\alpha,\beta]\bigr) \|f\|_{t}.
  \end{align*}
  Finally, we prove the strong norm estimate \eqref{op:LYestimate}. The first step is to decompose the iterated operator according to the cylinder sets. Fix a length $n$. For any word $w=[a_1,\cdots,a_n]$, let
  \[K_{w,\alpha}=I_{w}\cap\bigcap_{j=0}^{n-1}\left\{y:T^jy\le \alpha\right\}.\]
  Since each $T^j$ is monotone on a full Gauss cylinder $I_w$, then $K_{w,\alpha}$ is an interval (possibly empty or degenerate). Let 
  \[J_{w,\alpha}=T^n(K_{w,\alpha}).\]
  By the definition of $\mathcal{L}_{t,\alpha}$, we have
  \[\mathcal{L}^n_{t,\alpha}f=\sum_{|w|=n}\ind_{J_{w,\alpha}}|\phi^{\prime}_{w}|^t(f\circ \phi_w).\]
  Denote $\ind_{J_{w,\alpha}}|\phi^{\prime}_{w}|^t(f\circ \phi_w)$ by $B_{w,\alpha}f$ for convenience. Since $|\phi^{\prime}_{w}|^t$ is monotonic, we have
  \[\Var(|\phi^{\prime}_{w}|^t)\le \sup |\phi^{\prime}_{w}|^t.\]
  By \eqref{basic:derivatives}, we have 
  \[\Var(|\phi^{\prime}_{w}|^t)\le q_{w}^{-2t}.\]
  Since $\phi_w$ is monotonic, then
  \begin{equation*}
    \Var_{J_{w,\alpha}}\!\left(|\phi^{\prime}_{w}|^t(f \circ \phi_{w})\right) \leq q_w^{-2t} \Var_{I_{w}}(f) + q_w^{-2t} \sup_{I_{w}} |f|.
  \end{equation*}
  Multiplying by $\mathbf{1}_{J_{w,\alpha}}$ amounts to truncating this function to zero outside $J_{w,\alpha}$. This creates at most two additional jumps, located at the two endpoints of $J_{w,\alpha}$, and the size of each jump is at most
  \begin{equation*}
    q_w^{-2t} \sup_{I_{w}} |f|.
  \end{equation*}
  Therefore,
  \begin{align}\label{varBf1}
    \Var\!\left(B_{w,\alpha} f\right) \leq q_w^{-2t}\left(\Var_{I_{w}}(f)+3\sup_{I_{w}} |f|\right).
  \end{align}
  On the other hand, for any $y,z\in I_{w}$, by the definition of $\Var$, we have
  \[|f(y)|\le |f(z)|+\Var_{I_{w}}(f).\]
  Therefore,
  \[\int_{I_w} |f(y)|\,d\nu_t(z)\le \int_{I_w} \big(|f(z)|+\Var_{I_{w}}(f)\big)\,d\nu_t(z).\]
  By Lemma \ref{op:measure}, we have $\nu_t(I_w)>0$. Hence,
  \[\sup_{I_{w}} |f|\le \Var_{I_{w}}(f)+\frac{1}{\nu_t(I_w)}\int_{I_w}|f|\,d\nu_t.\]
  By \eqref{varBf1}, we have 
  \begin{align}\label{varBf2}
    \Var\!\left(B_{w,\alpha} f\right) \leq q_w^{-2t}\left(4\Var_{I_{w}}(f)+\frac{3}{\nu_t(I_w)}\int_{I_w}|f|\,d\nu_t\right).
  \end{align}
  Fix a positive integer $M\ge 1$. We first sum only over the finite family
  \begin{equation*}
    w \in \{1, \ldots, M\}^{n}.
  \end{equation*}
  The interiors of these cylinder sets of the same order are pairwise disjoint, and therefore
  \begin{equation*}
    \sum_{w\in \{1, \ldots, M\}^{n}} \Var_{I_{w}}(f) \leq \Var(f),
    \qquad
    \sum_{w\in \{1, \ldots, M\}^{n}} \int_{I_{w}} |f| \, d\nu_{t} \leq |f|_{t}.
  \end{equation*}
  Since only finitely many cylinder sets are involved, we can take the finite constant
  \begin{equation*}
    C_{n,M,t} := 3 \max_{w \in \{1, \ldots, M\}^{n}} \frac{q_{w}^{-2t}}{\nu_{t}(I_{w})}.
  \end{equation*}
  By \eqref{basic:continuants} and \eqref{varBf2}, we have 
  \begin{equation}\label{varBf3}
    \sum_{w \in \{1, \ldots, M\}^{n}} \Var\!\left(B_{w,\alpha} f\right) \leq 4 F_{n+1}^{-2t} Var(f) + C_{n,M,t} |f|_{t}.
  \end{equation}
  On the other hand, by \eqref{varBf1}, 
  \begin{align*}
    \Var\!\left(B_{w,\alpha} f\right) & \leq q_w^{-2t}\left(\Var(f) + 3\|f\|_{\infty}\right)\\
    & \leq 4 q_w^{-2t} \|f\|_{t}.
  \end{align*}
  Hence, 
  \begin{equation}\label{varBf4}
    \sum_{\substack{|w| = n \\ \max_{i} a_{i} > M}} \Var\!\left(B_{w,\alpha} f\right) \leq 4 \left( \sum_{\substack{|w| = n \\ \max_{i} a_{i} > M}} q_{w}^{-2t} \right) \|f\|_{t}.
  \end{equation}
  These summable weights also guarantee the convergence of the series of branches, which justifies the use of
  \begin{equation*}
    \Var\!\left( \sum_{w} B_{w,\alpha} f \right) \leq \sum_{w} \Var\!\left(B_{w,\alpha} f\right)
  \end{equation*}
  for the infinite sum. By \eqref{varBf3} and \eqref{varBf4}, we have
  \begin{align*}
    \Var\!\left(\mathcal{L}_{t,\alpha}^{n} f\right) & \leq 4 F_{n+1}^{-2t} \Var(f) + C_{n,M,t} |f|_{t} \\
    & \quad + 4 \left( \sum_{\substack{|w| = n \\ \max_{i} a_{i} > M}} q_{w}^{-2t} \right) \|f\|_{t}.
  \end{align*}
  Since $F_{n+1} \to \infty$, we first fix $n$ such that
  \begin{equation*}
    4 F_{n+1}^{-2t} < \frac{1}{4}.
  \end{equation*}
  Then, keeping this $n$ fixed, we choose $M$ large enough such that
  \begin{equation*}
    4 \sum_{\substack{|w| = n \\ \max_{i} a_{i} > M}} q_{w}^{-2t} < \frac{1}{4}.
  \end{equation*}
  Since $\Var(f) \leq \|f\|_{t}$, we obtain
  \begin{equation*}
    \Var\!\left(\mathcal{L}_{t,\alpha}^{n} f\right) \leq \frac{1}{2} \|f\|_{t} + C_{n,M,t} |f|_{t}.
  \end{equation*}
  By \eqref{op:weak}, we have 
  \begin{equation*}
    \|\mathcal{L}_{t,\alpha}^{n} f\|_t=\Var\!\left(\mathcal{L}_{t,\alpha}^{n} f\right)+|\mathcal{L}_{t,\alpha}^{n} f|_t\le \frac{1}{2} \|f\|_{t} + \left(C_{n,M,t}+\lambda_t^n\right) |f|_{t}.
  \end{equation*}
\end{proof}
For a bounded operator $A$ on this space, write $\sigma(A)$ for its
spectrum and
\[
 \rho(A)=\sup\{|z|:z\in\sigma(A)\}
\]
for its spectral radius.
\begin{proposition}[Uniform spectral stability of the open operators]\label{op:spectralstability}
  Fix $t\in(1/2,1)$. There is a constant $\kappa_t\in(0,1)$,
  independent of $\alpha\in[0,1]$, such that
  \begin{equation}\label{op:essentialradius}
   r_{\mathrm{ess}}(\Lop_{t,\alpha})\le\kappa_t
   \qquad(\alpha\in[0,1]),
  \end{equation}
  where $r_{\mathrm{ess}}$ denotes the essential spectral radius. Moreover, the following assertions hold.
  \begin{enumerate}
   \renewcommand{\labelenumi}{(\roman{enumi})}
   \renewcommand{\theenumi}{\roman{enumi}}
   \item\label{op:spectralupper}
  If $\alpha_j\to\alpha$ in $[0,1]$ and
  $z_j\in\sigma(\Lop_{t,\alpha_j})$ satisfy $z_j\to z$ with
  $|z|>\kappa_t$, then $z\in\sigma(\Lop_{t,\alpha})$.
  \item\label{op:spectralpersistence}
  Every $z\in\sigma(\Lop_{t,\alpha})$ with $|z|>\kappa_t$ is an isolated eigenvalue of finite algebraic multiplicity.
  Let $D(z,\delta)=\{w\in\mathbb C:|w-z|<\delta\}$, where
  $\delta>0$ is small enough that
  \[\overline{D(z,\delta)}\subset\{w:|w|>\kappa_t\},
   \qquad\overline{D(z,\delta)}\cap\sigma(\Lop_{t,\alpha})=\{z\}.\]
  For all $\beta\in[0,1]$ sufficiently close to $\alpha$, the boundary $\partial D(z,\delta)$ is disjoint from $\sigma(\Lop_{t,\beta})$, and the eigenvalues of $\Lop_{t,\beta}$ in $D(z,\delta)$ have total algebraic multiplicity equal to that of $z$. In particular, for every sequence $\alpha_j\to\alpha$, there are, for all sufficiently large $j$, spectral values $z_j\in\sigma(\Lop_{t,\alpha_j})$ such that $z_j\to z$.
  \end{enumerate}
\end{proposition}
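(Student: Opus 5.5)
The plan is the standard Keller--Liverani perturbation scheme, run with the uniform Lasota--Yorke data of Lemma \ref{op:LY}. The first step is the essential spectral radius bound \eqref{op:essentialradius}. Lemma \ref{op:LY} gives $\|\Lop_{t,\alpha}^{n_0}f\|_t\le\eta\|f\|_t+C|f|_t$ with $n_0,\eta,C$ independent of $\alpha$, and the strong unit ball is $|\cdot|_t$-compact, as noted after Lemma \ref{op:measure}. Hennion's theorem (the Ionescu-Tulcea--Marinescu type argument) then gives $r_{\mathrm{ess}}(\Lop_{t,\alpha}^{n_0})\le\eta$. Hence $r_{\mathrm{ess}}(\Lop_{t,\alpha})\le\eta^{1/n_0}$, and I will take $\kappa_t:=\eta^{1/n_0}$, possibly enlarged slightly below $1$ to leave room for the perturbation argument. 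The same theorem shows that every spectral point of modulus greater than $\kappa_t$ is an isolated eigenvalue of finite algebraic multiplicity. This is the first half of (ii).

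For the perturbation step I will apply the Keller--Liverani theorem to the family $\{\Lop_{t,\beta}\}_{\beta\in[0,1]}$, viewed as operators on $(BV,\|\cdot\|_t)$ with weak norm $|\cdot|_t$. Its hypotheses are three uniform estimates. The first is weak boundedness of all iterates, $|\Lop^k_{t,\beta}f|_t\le\lambda_t^k|f|_t$, which is \eqref{op:weak}. The second is the uniform Lasota--Yorke inequality \eqref{op:LYestimate}. One needs it for all $k$, not only for $n_0$; this follows by writing $k=mn_0+r$, iterating, and using a uniform bound on $\|\Lop_{t,\beta}\|_t$ for $r<n_0$. That bound comes from \eqref{varBf1}, \eqref{basic:sum} and $t>1/2$. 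Since Keller--Liverani is usually stated with $\|L^n\|_{w}\le CM^n$ and Lasota--Yorke constants $\eta^n$ against $M^n$, I will normalize by dividing all operators by $\lambda_t$, or simply note that the theorem only requires $\kappa_t<M=\lambda_t$ in the appropriate sense. The third hypothesis is smallness in the mixed norm: $\sup_{\|f\|_t\le1}|(\Lop_{t,\alpha}-\Lop_{t,\beta})f|_t\le\lambda_t\,\nu_t\big((\min(\alpha,\beta),\max(\alpha,\beta)]\big)$ by \eqref{op:perturb}. Because $\nu_t$ is atomless (Lemma \ref{op:measure}), this quantity tends to $0$ as $\beta\to\alpha$.

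Keller--Liverani then yields, for any $r>\kappa_t$, two conclusions. First, the resolvents $(z-\Lop_{t,\beta})^{-1}$ are uniformly bounded in the strong norm on $\{|z|\ge r\}\setminus\sigma(\Lop_{t,\alpha})$-neighbourhoods once $\beta$ is close to $\alpha$. Second, the spectral projections $\Pi_\beta=\frac1{2\pi i}\oint_{\partial D(z,\delta)}(w-\Lop_{t,\beta})^{-1}dw$ converge to $\Pi_\alpha$ in the mixed norm and have equal rank for $\beta$ near $\alpha$. Equal rank of $\Pi_\beta$ and $\Pi_\alpha$ is exactly the multiplicity statement in (ii). Since the rank is at least $1$, $\sigma(\Lop_{t,\beta})\cap D(z,\delta)\neq\emptyset$, and letting $\delta\to0$ along a diagonal sequence produces $z_j\to z$. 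For (i), argue by contradiction. If $z\notin\sigma(\Lop_{t,\alpha})$ and $|z|>\kappa_t$, then some closed disc around $z$ lies in the resolvent set of $\Lop_{t,\alpha}$. The Keller--Liverani resolvent bound gives that this disc is also in the resolvent set of $\Lop_{t,\alpha_j}$ for large $j$, which contradicts $z_j\to z$.

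I expect the main obstacle to be checking the hypotheses precisely rather than the abstract theorem itself. Two points need care. One is obtaining the Lasota--Yorke inequality for all iterates uniformly in $\beta$, together with the uniform bound on $\|\Lop_{t,\beta}\|_t$ from the countably many branches. The other is matching the normalization, since $\lambda_t>1$ while the contraction constant $\eta$ is relative to $\|f\|_t$. I would handle the normalization by passing to $\tilde\Lop_{t,\beta}=\lambda_t^{-1}\Lop_{t,\beta}$, which is $|\cdot|_t$-contracting, and rescaling $\kappa_t$ at the end.
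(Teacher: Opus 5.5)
Your proposal is correct and is essentially the paper's proof. You iterate the uniform Lasota--Yorke inequality \eqref{op:LYestimate} to all powers and combine it with the $|\cdot|_t$-compactness of the strong unit ball to get $r_{\mathrm{ess}}(\Lop_{t,\alpha})\le\eta^{1/n_0}=:\kappa_t$ uniformly in $\alpha$; you then feed \eqref{op:weak}, the iterated Lasota--Yorke bound and the mixed-norm estimate \eqref{op:perturb} (made small by atomlessness of $\nu_t$) into Keller--Liverani, obtaining (i) by the resolvent-set contradiction and (ii) by rank stability of the spectral projections, while your extra remarks (the uniform bound on $\|\Lop_{t,\beta}\|_t$ via \eqref{varBf1} and \eqref{basic:sum}, and the normalization, harmless since $\kappa_t<1<\lambda_t$) only make explicit what the paper leaves as ``treating the finitely many remaining powers''.
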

\begin{proof}
  By Lemma~\ref{op:LY}, the weak bound \eqref{op:weak} and the uniform Lasota--Yorke estimate \eqref{op:LYestimate} hold. Iterating \eqref{op:LYestimate} and treating the finitely many remaining powers therefore gives
  \begin{equation}\label{op:twonorm}
   \|\Lop_{t,\alpha}^{\,k}f\|_t \le C_1\kappa_t^k\|f\|_t+C_2A_t^k|f|_t \qquad(k\ge0).
  \end{equation}
  The compactness of the strong unit ball in the weak norm, together with \eqref{op:twonorm}, yields \eqref{op:essentialradius}. Consequently, all spectral values outside the closed disk of radius $\kappa_t$ are isolated eigenvalues of finite algebraic multiplicity. Define the strong-to-weak operator norm by
  \[\|A\|_{t\to w}=\sup_{\|f\|_t\le1}|Af|_t.\]
  The perturbation estimate \eqref{op:perturb} gives
  \[\|\Lop_{t,\beta}-\Lop_{t,\alpha}\|_{t\to w}
   \le\lambda_t\nu_t\bigl((\min\{\alpha,\beta\},
                              \max\{\alpha,\beta\}]\bigr)
   \longrightarrow0\qquad(\beta\to\alpha).\]
  Here atomlessness of $\nu_t$ ensures the convergence; its distribution function is uniformly continuous on $[0,1]$, so this estimate also provides a uniform modulus of perturbation. Thus the hypotheses of the Keller--Liverani theorem are satisfied.\par
  For \eqref{op:spectralupper}, suppose that $z\notin\sigma(\Lop_{t,\alpha})$ and choose $r$ with $\kappa_t<r<|z|$.
  Theorem~1 of \cite{KL99} makes a sufficiently small closed
  neighborhood of $z$ disjoint from $\sigma(\Lop_{t,\beta})$ for all $\beta$ close to $\alpha$. This contradicts $z_j\to z$.\par
  For \eqref{op:spectralpersistence}, choose $\kappa_t<r<\min_{w\in\overline{D(z,\delta)}}|w|$. Theorem~1 and Corollary~1 of \cite{KL99} give stability of the spectral projection associated with $\partial D(z,\delta)$: its finite rank is unchanged for $\beta$ sufficiently close to $\alpha$. This rank equals the total algebraic multiplicity of the eigenvalues inside the disk, proving the stated equality. Applying the same conclusion to arbitrarily small disks centered at $z$ gives the final sequential assertion.
\end{proof}
\subsection{Rational parameters and the dimension threshold}
For completeness, the next lemma supplies the spectral identification needed below, independently of a global finite-primitivity claim. The BV spectral radius does not depend on the equivalent strong norm used. 
\begin{lemma}\label{rat:crossing}
Let $\alpha\in\mathbb Q\cap(0,1]$. Then $d(\alpha)>1/2$ and
\[
 \rho(\mathcal{L}_{t,\alpha})
 \begin{cases}
 >1, & 1/2<t<d(\alpha),\\
 =1, & t=d(\alpha),\\
 <1, & t>d(\alpha).
 \end{cases}
\]
Moreover, with $\varphi=(1+\sqrt5)/2$, we have
\begin{equation}\label{rat:radius-slope}
 \rho(\mathcal{L}_{t,\alpha})\le\varphi^{-2(t-s)}\rho(\mathcal{L}_{s,\alpha})
 \qquad(t>s>1/2).
\end{equation}
\end{lemma}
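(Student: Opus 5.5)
The plan is to identify the spectral radius $\rho(\Lop_{t,\alpha})$ on the BV space with $e^{P(t,\alpha)}$, where $P(t,\alpha)$ is the pressure from Proposition \ref{wsgs1}. Once this identification is in place, the trichotomy follows from three facts: $d(\alpha)=\inf\{t:P(t,\alpha)\le 0\}$, the strict monotonicity of $P$ from Lemma \ref{ylhs}, and the fact that $P(\cdot,\alpha)$ is finite on $(1/2,\infty)$. For the claim $d(\alpha)>1/2$ we simply quote Lemma \ref{basic:half}.

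\textbf{Step 1: upper bound $\rho\le e^{P}$.} Iterating as in the proof of Lemma \ref{op:LY}, we have $\Lop^n_{t,\alpha}f=\sum_{|w|=n}\ind_{J_{w,\alpha}}|\phi_w'|^t f\circ\phi_w$. The words with $J_{w,\alpha}\neq\emptyset$ are exactly those admissible for $E(\alpha)$ up to boundary effects at length $n$. Such a word extends to an orbit staying in $[0,\alpha]$ for $n$ steps, though not necessarily forever; this is controlled because $\mathcal{D}_{n+m}$-words restrict to $\mathcal{D}_n$-words, and conversely by the finite lookahead coming from the rational $\alpha$ having a finite expansion, as in Lemma \ref{zf}. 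Together with \eqref{basic:derivatives} and the variation estimate \eqref{varBf1}, this gives
\[\|\Lop^n_{t,\alpha}\|\le C\sum_{w\in\mathcal{D}_{n}(\alpha)}q_w^{-2t}\]
up to a subexponential factor. Taking $n$-th roots gives $\rho\le e^{P(t,\alpha)}$.

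\textbf{Step 2: lower bound $\rho\ge e^{P}$.} Test the operator against $\nu_t$ and the constant function: $\int\Lop^n_{t,\alpha}\ind\,d\nu_t\ge 4^{-t}\sum_{w}q_w^{-2t}\nu_t(J_{w,\alpha})$. The main obstacle is bounding $\nu_t(J_{w,\alpha})$ from below uniformly. Here we use the rationality of $\alpha$: the images $J_{w,\alpha}=T^n(K_{w,\alpha})$ take only finitely many forms, namely intervals determined by the suffix state of the Markov structure from Proposition \ref{wsgs1}. Each such interval is nondegenerate for words extendable forever, so by the full support in Lemma \ref{op:measure} their $\nu_t$-masses are bounded below by some $c>0$. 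This gives $\|\Lop^n_{t,\alpha}\ind\|\ge c'\sum_{\mathcal{D}_n(\alpha)}q_w^{-2t}$, hence $\rho\ge e^{P}$.

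For the degenerate case $a_2\le N$, where the system is not finitely primitive, I would instead sandwich between $E(1/(N+1))$ and $E([N,N+1])$ as in Proposition \ref{wsgs1}; the pressures there coincide. Alternatively, one restricts to the finitely primitive subsystem carrying the pressure.

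\textbf{Step 3: the slope estimate \eqref{rat:radius-slope}.} This is pointwise. For $t>s$, $|\phi_w'|^t=|\phi_w'|^s|\phi_w'|^{t-s}\le|\phi_w'|^s q_w^{-2(t-s)}\le|\phi_w'|^s\varphi^{-2(n-1)(t-s)}$ by \eqref{basic:continuants} and $F_{n+1}\ge\varphi^{n-1}$. Positivity of the operators then gives $|\Lop^n_{t,\alpha}f|\le\varphi^{-2(n-1)(t-s)}\Lop^n_{s,\alpha}|f|$. Combined with Step 1's comparability of the operator norm with $\Lop^n\ind$ (valid for positive operators up to constants), taking $n$-th roots yields $\rho(\Lop_{t,\alpha})\le\varphi^{-2(t-s)}\rho(\Lop_{s,\alpha})$.

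In particular $t\mapsto\rho(\Lop_{t,\alpha})$ is strictly decreasing, and it is continuous, being $e^{P}$ with $P$ convex and finite. This gives the strict trichotomy around $d(\alpha)$, since $e^{P(d(\alpha),\alpha)}=1$ by Theorem \ref{wsgs} 2).
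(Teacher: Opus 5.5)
\paragraph{Comparison with the paper.} Your strategy is sound in outline but organized differently from the paper. The paper keeps only the intervals $J_1,\dots,J_K$ (cut out by the orbit of $\alpha$) that meet $E(\alpha)\setminus\mathbb Q$. It writes $\mathcal L_{t,\alpha}$ in block lower-triangular form with a nilpotent transient block $\mathcal N_{t,\alpha}$, so that $\rho(\mathcal L_{t,\alpha})=\rho(\mathcal R_{t,\alpha})$, and then sandwiches $\|\mathcal R^n_{t,\alpha}\|$ between multiples of $Q_n(t,\alpha)$ by path counting. You work with $\mathcal L^n_{t,\alpha}$ itself. Your upper bound sums \eqref{varBf1} over all words with $J_{w,\alpha}\neq\emptyset$. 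Your lower bound integrates $\mathcal L^n_{t,\alpha}\mathbf 1$ against a full-support measure. It uses that $J_{w,\alpha}$ has endpoints in the finite set $\{0,1\}\cup\{T^k\alpha\}$, so it is one of finitely many intervals, and that it is nondegenerate whenever $w$ is the word of an irrational survivor. This lower bound avoids the restriction to $U_\alpha$ and the path-multiplicity count. Since it only uses $\|g\|_\infty\ge\int g\,d\nu$, take Lebesgue measure rather than $\nu_t$: the latter is constructed only for $t<1$, whereas the lemma concerns all $t>1/2$.

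\paragraph{The gap in Step 1.} The words with $J_{w,\alpha}\neq\emptyset$ form a set $\mathcal W_n\supsetneq\mathcal D_n(\alpha)$: these are words of orbits that stay in $[0,\alpha]$ for $n$ steps but may escape later. You need $\sum_{\mathcal W_n}q_w^{-2t}\le e^{o(n)}Q_n(t,\alpha)$, and your justification does not give it. Lemma \ref{zf} concerns perturbing $\alpha$ and says nothing about this. ``Finite lookahead'' (each condition $T^jy\le\alpha$ is decided by $m$ consecutive digits) also does not by itself bound how long a word can survive before being forced out, because the digit alphabet is infinite.

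What works is your own Step 2 observation. The admissible continuations of $w$ depend only on $J_{w,\alpha}$, which takes finitely many values. From a state admitting continuations of every length one reaches a cycle of this finite state graph, and looping it produces a periodic irrational survivor, so such a state admits an infinite continuation. All other states have depth at most some $L$. Hence every $w\in\mathcal W_n$ has its prefix of length $n-L-1$ in $\mathcal D_{n-L-1}(\alpha)$, and $q_{uv}\ge q_uq_v$, $q_v\ge\prod v_i$ give
\[\sum_{w\in\mathcal W_n}q_w^{-2t}\le\Big(\sum_{a\ge1}a^{-2t}\Big)^{L+1}Q_{n-L-1}(t,\alpha),\]
which yields $\rho(\mathcal L_{t,\alpha})\le e^{P(t,\alpha)}$. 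This bounded-depth fact is exactly what the nilpotency of $\mathcal N_{t,\alpha}$ encodes in the paper.

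\paragraph{Smaller points.} In Step 3, comparability of $\|\mathcal L^n\|$ with $\|\mathcal L^n\mathbf 1\|$ is not a general property of positive operators on BV. Once $\rho=e^{P}$ is proved, just use $Q_n(t,\alpha)\le F_{n+1}^{-2(t-s)}Q_n(s,\alpha)$, as the paper does. Citing Theorem \ref{wsgs} 2) for $P(d(\alpha),\alpha)=0$ is circular in the paper's order, since that theorem is deduced from Theorem \ref{lx}, which uses this lemma. Get it instead from continuity of $P(\cdot,\alpha)$ on $(1/2,\infty)$, $d(\alpha)>1/2$ and \eqref{wsgs2}. Finally, your paragraph on the case $a_2\le N$ is unnecessary, because Steps 1--3 never use finite primitivity. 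Moreover, the pressures of $E(1/(N+1))$ and $E([N,N+1])$ do not agree for large $t$, where the fixed point $[\overline N]$ dominates; only the dimensions agree.
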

\begin{proof}
  We use the Markov coding, the irreducibility verification, and the rational-parameter Bowen formula established in Section~4 of the manuscript. We do not repeat these arguments. Let 
  \begin{equation}\label{rat:pressure}
   Q_n(t,\alpha)=\sum_{w\in\mathcal D_n(\alpha)}q_w^{-2t}.
  \end{equation}
  Now, we identify the pressure $P(t,\alpha)$ with the spectral radius on $BV$.\par
  Since $\alpha\in \mathbb{Q}$, there exists some $m\ge 1$ such that $T^m(\alpha)=0$ but $T^{m-1}(\alpha)\neq 0$. Define
  \begin{align*}
    B_{\alpha}=\left(\{0,1\}\cup \{T^j(\alpha): 1\le j\le m\}\right)\cap [0,\alpha].
  \end{align*}
  Since $B_{\alpha}$ consists of finitely many points, we can arrange them in increasing order
  \begin{equation*}
    0 = b_{0} < b_{1} < \cdots < b_{r} = \alpha.
  \end{equation*}
  They divide $[0,\alpha]$ into finitely many open intervals
  \begin{equation*}
    H_{i} = (b_{i-1}, b_{i}).
  \end{equation*}
  We then keep only those intervals that can support orbits surviving forever, that is, those satisfying
  \begin{equation*}
    H_{i} \cap \bigl( E(\alpha) \setminus \mathbb{Q} \bigr) \neq \varnothing,
  \end{equation*}
  and renumber them as
  \begin{equation*}
    J_{1}, \ldots, J_{K}.
  \end{equation*}
  Each $J_i$ only needs to contain at least one point that survives forever; the other points in the interval may still escape later. For fixed $\alpha$, these intervals and their number $K$ are fixed, and do not change with the number $n$ of iterations. Set 
  \[U_{\alpha}=\bigcup_{i=1}^KJ_i,\ \quad \mathcal{B}=\bigoplus_{i=1}^K BV(J_i).\]
  Define an operator $\mathcal{R}_{t,\alpha}$ on $\mathcal{B}$ by 
  \[\mathcal{R}_{t,\alpha}f:=\mathcal{L}_{t,\alpha}\ind_{U_{\alpha}}f \bigr|_{U_\alpha},\]
  i.e.,
  \begin{equation*}
    (\mathcal{R}_{t,\alpha} f)(x) = \sum_{\substack{k \geq 1 \\ 1/(a+x) \in U_{\alpha}}}|\phi^{\prime}_a(x)|^t f\!\left(\phi_a(x)\right),
    \qquad x \in U_{\alpha}.
  \end{equation*}
  Now, we claim that 
  \begin{equation}\label{suanzifenjie}
    \mathcal{L}_{t,\alpha} = \begin{pmatrix} \mathcal{R}_{t,\alpha} & 0 \\ \mathcal{C}_{t,\alpha} & \mathcal{N}_{t,\alpha} \end{pmatrix}, \quad \mathcal{N}_{t,\alpha}^m=0\  \text{for some}\ m.
  \end{equation}
  Let $V_{\alpha}$ denote the union of the remaining intervals,i.e.,
  \[V_{\alpha}=[0,1]\setminus \left(B_{\alpha}\cup U_{\alpha}\right).\]
  In order to prove \eqref{suanzifenjie}, we 
  write any input function as
  \begin{equation*}
    f = \begin{pmatrix} f_{U_{\alpha}} \\ f_{V_{\alpha}} \end{pmatrix}.
  \end{equation*}
  The output function can also be split into two parts according to $U_{\alpha}$ and $V_{\alpha}$. Hence the original operator can first be written in the general block form
  \begin{equation*}
    \mathcal{L}_{t,\alpha} = \begin{pmatrix} \mathcal{R}_{t,\alpha} & \mathcal{B}_{t,\alpha} \\ \mathcal{C}_{t,\alpha} & \mathcal{N}_{t,\alpha} \end{pmatrix}.
  \end{equation*}
 The columns indicate where the input function lives, and the rows indicate where the output lives. The four blocks are as follows:
 \begin{center}
  \begin{tabular}{c|c|c}
   Block & Input part & Output part \\
   \hline
   $\mathcal{R}_{t,\alpha}$ & $U_{\alpha}$ & $U_{\alpha}$ \\
   $\mathcal{B}_{t,\alpha}$ & $V_{\alpha}$ & $U_{\alpha}$ \\
   $\mathcal{C}_{t,\alpha}$ & $U_{\alpha}$ & $V_{\alpha}$ \\
   $\mathcal{N}_{t,\alpha}$ & $V_{\alpha}$ & $V_{\alpha}$
   \end{tabular}
  \end{center}
  We first prove $\mathcal{B}_{t,\alpha}=0$. We decompose the inputs in $V_{\alpha}$ to two types. The first type lies in $(\alpha, 1]$. By definition,
  \begin{equation*}
    \mathcal{L}_{t,\alpha} f = \mathcal{L}_{t}\!\left( \mathbf{1}_{[0,\alpha]} f \right),
  \end{equation*}
  so the operator simply ignores this part of the input.
  The second type lies in $[0,\alpha]$, but belongs to an interval state that cannot be continued indefinitely. Such a state cannot have a Markov edge leading to some $J_{i}$: otherwise, concatenating this edge with an infinitely surviving path starting from $J_{i}$ would yield an infinitely surviving path starting from this state, contradicting the definition of $V_\alpha$. Therefore, inputs from $V_{\alpha}$ contribute nothing to the output on $U_{\alpha}$, that is,
  \begin{equation*}
    \mathcal{B}_{t,\alpha} = 0.
  \end{equation*}
  By similar arguments above, we have that the corresponding state diagram of $\mathcal{N}_{t,\alpha}$ has only finitely many vertices and contains no directed cycles. This implies that $\mathcal{N}_{t,\alpha}$ is nilpotent. Hence, the equation \eqref{suanzifenjie} holds.\par
  By \eqref{suanzifenjie}, we have 
  \[\rho(\mathcal{L}_{t,\alpha})=\rho(\mathcal{R}_{t,\alpha}).\]
  For a length $n$ path $p$ from $J_u$ to $J_v$, define its inverse branch by $\phi_p:J_v\rightarrow J_u$. Let
  \begin{equation*}
    b_{t}(p) = \sup_{J_{v}} |\phi_{p}'|^{t},
    \qquad
    S_{n}(t) = \sum_{|p| = n} b_{t}(p).
  \end{equation*}
  If the path $p$ corresponds to the partial quotient word $w$, by \eqref{basic:derivatives}, we have 
  \begin{equation*}
    4^{-t} q_{w}^{-2t} \leq b_{t}(p) \leq q_{w}^{-2t}.
  \end{equation*}
  Note that if the word and the terminal interval are fixed, then all the intermediate intervals are determined as well. Therefore, each word is realized by at least one and at most $K$ paths, and the multiplicity does not grow with $n$. Hence,
  \begin{equation}\label{A}
    4^{-t} Q_{n}(t,\alpha) \leq S_{n}(t) \leq K Q_{n}(t,\alpha).
  \end{equation}
  Divide the paths into $K$ groups according to their terminal intervals. The total weight is $S_{n}$, so at least one group has weight no less than $S_{n}/K$. Within this terminal interval, bounded distortion guarantees that the actual weight of each path is no less than $4^{-t}$ times its maximal weight. Therefore
  \begin{equation*}
    \mathcal{R}_{t,\alpha}^{n} \mathbf{1}(x) \geq \frac{S_{n}(t)}{K 4^{t}}.
  \end{equation*}
  By the definition of the operator norm, we have
  \begin{equation}\label{xianjie}
    \bigl\| R_{t,\alpha}^{n} \bigr\| \geq \frac{S_{n}(t)}{K 4^{t}}.
  \end{equation}
  On the other hand, let
  \begin{equation*}
    M = \max_{v} \|f_{v}\|_{\infty},
    \qquad
    V = \sum_{v} \Var_{J_{v}}(f_{v}).
  \end{equation*}
  Then $\|f\|_{\mathcal{B}} = M + V$. The sum of the maximal weights over all branches is $S_{n}$, and therefore the supremum of the output satisfies
  \begin{equation*}
    \|R_{t,\alpha}^{n} f\|_{\infty} \leq S_{n} M.
  \end{equation*}
  For the contribution of each path, we use the product rule for variation together with the monotonicity of the weights
  \begin{equation*}
    \Var\!\left( |\phi_{p}'|^{t} (f \circ \phi_{p}) \right) \leq b_{t}(p) \Var_{J_{u}}(f) + b_{t}(p) M.
  \end{equation*}
  Summing over the paths gives
  \begin{equation*}
    \sum_{v} \Var_{J_{v}}\!\left( R_{t,\alpha}^{n} f \right) \leq S_{n} V + S_{n} M.
  \end{equation*}
  Hence,
  \begin{equation*}
    \|R_{t,\alpha}^{n} f\|_{\mathcal{B}} \leq S_{n} V + 2 S_{n} M \leq 2 S_{n} \|f\|_{\mathcal{B}}.
  \end{equation*}
  Therefore,
  \begin{equation}\label{shangjie}
    \bigl\| R_{t,\alpha}^{n} \bigr\| \leq 2 S_{n}(t).
  \end{equation}
  By \eqref{A}, \eqref{xianjie} and \eqref{shangjie}, we obtain
  \[\frac{Q_n(t,\alpha)}{K4^s}\le \bigl\| R_{t,\alpha}^{n} \bigr\|\le 2KQ_n(t,\alpha).\]
  Taking $n$th roots and applying the spectral radius formula proves
  \begin{equation}\label{rat:spectralpressure}
    \rho(\mathcal{L}_{t,\alpha})=\rho(\mathcal{R}_{t,\alpha})=e^{P(t,\alpha)}.
  \end{equation}
  Finally, since $q_w\ge F_{n+1}$ for every length-$n$ digit word, then
  \[ Q_n(t,\alpha)\le F_{n+1}^{-2(t-s)}Q_n(s,\alpha)\qquad(t>s>1/2).\]
  Since $n^{-1}\log F_{n+1}\to\log\varphi$, this implies
  \begin{equation}\label{rat:strictslope}
   P(t,\alpha)\le P(s,\alpha)-2(t-s)\log\varphi.
  \end{equation}
  Continuity of $P(\cdot,\alpha)$ at $d(\alpha)$ and \eqref{wsgs2} give $P(d(\alpha),\alpha)=0$. Equation~\eqref{rat:strictslope} now gives positive pressure below $d(\alpha)$ and negative pressure above it. Combining this with \eqref{rat:spectralpressure} proves the three spectral signs and
  \eqref{rat:radius-slope}.
\end{proof}
\subsection{The proof of Theorems \ref{wsgs} and \ref{lx}}
We first prove Theorem \ref{lx}, then by Theorem \ref{lx} and Proposition \ref{wsgs1}, we complete the prove of Theorem \ref{wsgs}.
\begin{proof}[Proof of Theorem \ref{lx}]
  Fix $\alpha_0\in(0,1]$. Let
  \[d_- =\sup_{0<\beta<\alpha_0}d(\beta),\qquad
    d_+ =\begin{cases}
    \inf_{\alpha_0<\beta\le1}d(\beta),&\alpha_0<1,\\
    d(1)=1,&\alpha_0=1.
 \end{cases}\]
  Since the function $d$ is nondecreasing, then $d_-\le d(\alpha_0)\le d_+$. By Lemma~\ref{basic:half} gives $d_->1/2$. Suppose $d_-<d_+$. Then, we can choose
  \[d_-<t<d_+, \qquad 0<\varepsilon<\min\{t-d_-,d_+-t\}.\]
  In particular $1/2<t<1$. Take positive rationals $\beta_j\uparrow\alpha_0$. If $\alpha_0<1$, take rationals
  $\gamma_j\downarrow\alpha_0$ from above, if $\alpha_0=1$, take $\gamma_j\equiv 1$. Then,
  \[d(\beta_j)\le t-\varepsilon, \qquad d(\gamma_j)\ge t+\varepsilon.\]
  By \eqref{rat:radius-slope}, we have
  \begin{align}\label{C}
    \rho(\mathcal{L}_{t,\beta_j})\le \varphi^{-2(t-d(\beta_j))}\rho(\mathcal{L}_{d(\beta_j),\beta_j})\le \varphi^{-2\varepsilon}<1.
  \end{align}
  The same arguments gives 
  \begin{align}\label{D}
    \rho(\mathcal{L}_{t,\gamma_j})\ge \varphi^{2\varepsilon}>1.
  \end{align}
  For each $j$ choose a spectral value $z_j$ of $\Lop_{t,\gamma_j}$ whose modulus is its spectral radius. The operators have uniformly bounded strong norms, so a subsequence converges to $z$ with $|z|\ge\varphi^{2\varepsilon}>1$. Proposition~\ref{op:spectralstability}(\ref{op:spectralupper}) gives $z\in\sigma(\Lop_{t,\alpha_0})$. Since $|z|>1>\kappa_t$, part~(\ref{op:spectralpersistence}) of the same proposition gives, for all sufficiently large $j$, spectral values $w_j\in\sigma(\Lop_{t,\beta_j})$ with $w_j\to z$. Then $|w_j|>1$ for all large $j$, contradicting \eqref{C} and \eqref{D}. Thus $d_-=d_+$ and monotonicity proves continuity, including at $1$.
\end{proof}
\subsection{A counterexample}
Take the partition intervals
\begin{equation*}
    I_{1} = (1/4, 1),
    \qquad
    I_{n} = \left( 2^{-(n+1)}, 2^{-n} \right), \quad n \geq 2.
\end{equation*}
Set $T = 0$ at all the partition endpoints (including $0$ and $1$), and define $T$ inside the intervals by
\begin{equation*}
    T(x) =
    \begin{cases}
        1-x+16(\frac{3}{8}-x)_{+}^{2}, & x \in I_{1}, \\
        2-2^{n+1}x, & x \in I_{n}, \ n \geq 2,
    \end{cases}
\end{equation*}
where $u_{+} = \max\{u, 0\}$. It can be checked that $T\bigg|_{I_n}$ is continuous and strictly decreasing and maps $I_n$ onto $(0,1)$, and the only accumulation point of the partition endpoints is $0$. However, the following proposition holds for such a system.
\begin{proposition}
  We have
  \[d\left(\frac{1}{2}\right)\le \frac{2}{3},\qquad d(\alpha)=1,\ \forall\alpha>\frac{1}{2}.\]
\end{proposition}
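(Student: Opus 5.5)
The statement has two halves, and I would prove them separately by elementary means. The spectral machinery of the previous subsections is not needed.

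\emph{Step 1: $d(\alpha)=1$ for $\alpha>1/2$.} On $[3/8,1)\subset I_1$ the correction term $16(3/8-x)_+^2$ vanishes, so $T(x)=1-x$ there. For $\alpha>1/2$, put $a=\max\{1-\alpha,3/8\}<1/2<\alpha$. For $x\in[a,\min\{\alpha,1\})$ we get $T(x)=1-x\in(1-\alpha,1-a]$, and $1-a\le 5/8$.
\begin{itemize}
\item When $\alpha$ is close to $1/2$, so that $1-\alpha\ge 3/8$, the interval $J=[1-\alpha,\alpha]$ is mapped into itself by the reflection $x\mapsto 1-x$. Hence every orbit starting in $J$ stays in $J\subset[0,\alpha]$, which gives $J\subset E(\alpha)$.
\item For larger $\alpha$, monotonicity of $\alpha\mapsto E(\alpha)$ reduces to the previous bullet: fix any $\alpha'\in(1/2,5/8]$ with $\alpha'\le\alpha$, and then $[1-\alpha',\alpha']\subset E(\alpha')\subset E(\alpha)$.
\end{itemize}
In either case $E(\alpha)$ contains a nondegenerate interval, so $d(\alpha)=1$.

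\emph{Step 2: locate the survivors of $E(1/2)$ inside $I_1$.}
\begin{itemize}
\item For $x\in(3/8,1/2)$ we have $T(x)=1-x>1/2$, so these points escape at once.
\item For $x\in(1/4,3/8)$ we have $T(x)\ge 1-x>5/8>1/2$, so these also escape.
\item Points of $I_1$ above $1/2$ are already excluded.
\end{itemize}
Thus the only point of $I_1$ lying in $E(1/2)$ is the fixed point $1/2$. Consequently, every $x\in E(1/2)$ falls into one of two classes:
\begin{itemize}
\item its orbit eventually hits $1/2$ or a partition endpoint (and then $0$);
\item its orbit stays forever in $\bigcup_{n\ge2}I_n$, and moreover in $[0,1/2]$.
\end{itemize}
The first class is countable: it is a countable union of preimages of countably many points under countably many injective branches. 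So it has dimension $0$.

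\emph{Step 3: bound the second class.} The inverse branches $\psi_n$ of $T|_{I_n}$ for $n\ge2$ are affine with contraction ratio $2^{-(n+1)}$ and images contained in $I_n$. Hence the second class is contained in the limit set of the conformal IFS $\{\psi_n\}_{n\ge2}$. For that limit set I would use the natural covers: for each $k$, the images $\psi_{n_1}\circ\cdots\circ\psi_{n_k}([0,1])$ cover it. The sum of $s$-th powers of their lengths is
\[\Bigl(\sum_{n\ge2}2^{-(n+1)s}\Bigr)^k=\Bigl(\frac{2^{-3s}}{1-2^{-s}}\Bigr)^k .\]
At $s=2/3$ the base equals $\frac{1/4}{1-2^{-2/3}}\approx 0.68<1$, so these sums tend to $0$. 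The mesh of the covers also goes to $0$, since every ratio is at most $1/8$. Therefore $\mathcal H^{2/3}$ of the second class is $0$, and $\dim_H\le 2/3$.

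\emph{Where the care is needed.} The main thing to handle carefully is the bookkeeping in Step 2, in particular the left part of $I_1$. There one must check that $1-x+16(3/8-x)^2>1/2$ on all of $(1/4,3/8)$. This is immediate, since the function is at least $1-x>5/8$ there.
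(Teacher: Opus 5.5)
Your proof is correct and follows essentially the same route as the paper: for $\alpha>1/2$ you use the reflection-invariant interval $[1-\alpha',\alpha']\subset[3/8,5/8]$ around the fixed point $1/2$. For $\alpha=1/2$ you argue that only $1/2$ survives in $I_1$, discard a countable exceptional set, and apply the cylinder covers with $\sum_{n\ge2}2^{-2(n+1)/3}=\frac{1}{4(1-2^{-2/3})}<1$. The only slip is that $x=3/8$ falls between your two open intervals in Step 2, but $T(3/8)=5/8>1/2$, so it escapes as well.
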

\begin{proof}
  When $\alpha = 1/2$, if $x \in I_{1}$ and $x < 1/2$, then $T(x) \geq 1 - x > 1/2$. Hence only the fixed point $1/2$ can survive in the first branch. Apart from countably many orbits that eventually land at $1/2$ or at a partition endpoint, surviving orbits can only use the tail branches $n \geq 2$. The inverse maps of these branches are
  \begin{equation*}
    \phi_{n}(y) = 2^{-n} - 2^{-(n+1)} y.
  \end{equation*}
  Let $r_{n} = |\phi_{n}'| = 2^{-(n+1)}$ and let $K$ be the limit set using only these branches. Since
  \begin{equation*}
    \sum_{n=2}^{\infty} r_{n}^{2/3} = \frac{1}{4 \left( 1 - 2^{-2/3} \right)} < 1,
  \end{equation*}
  the cover by cylinder intervals of length $m$ satisfies
  \begin{equation*}
    \sum_{n_{1}, \ldots, n_{m} \geq 2}
    \left| \phi_{n_{1}} \circ \cdots \circ \phi_{n_{m}} ([0,1]) \right|^{2/3}
    = \left( \sum_{n=2}^{\infty} r_{n}^{2/3} \right)^{m}
    \longrightarrow 0.
  \end{equation*}
  Hence,
  \[d_{T}(1/2) = \dim_{\mathrm{H}} K \leq 2/3.\]
  On the other hand, for any $\alpha > 1/2$, choose
  \begin{equation*}
    0 < \varepsilon < \min \left\{ \alpha - 1/2, \, 1/8 \right\}.
  \end{equation*}
  The interval $J = [1/2 - \varepsilon, 1/2 + \varepsilon]$ lies within the reflection branch and satisfies $T(J) = J$ and $J \subset [0,\alpha]$. Therefore $J \subset E(\alpha)$, and thus $d(\alpha) = 1$.
\end{proof}
\section{The proof of Theorems \ref{sejh} and \ref{sews}}
In this section, we will prove Theorems \ref{sejh} and \ref{sews}. First, we prove a proposition which links $S(\alpha)$ and $E(\alpha)$.
\begin{proposition}\label{rcf}
  Let $0<\alpha <1$ and let $T$ be the Gauss map. Then, we have
  \[S(\alpha)=\left\{x=[a_0;a_1,\cdots]\in \mathbb{R}\setminus \mathbb{Q}: T^k(x-a_0)<\alpha,\ \forall k\ge 0\right\}.\]
\end{proposition}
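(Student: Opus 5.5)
The plan is to compare the two algorithms step by step, by induction on the number of digits. The whole argument rests on one elementary observation about the floor functions in \eqref{fh}. For $u>0$, write $\{u\}=u-\lfloor u\rfloor$. Since $0<\alpha<1$, we have
\[
\lfloor u+1-\alpha\rfloor=\begin{cases}\lfloor u\rfloor, & \{u\}<\alpha,\\ \lfloor u\rfloor+1, & \{u\}\ge\alpha.\end{cases}
\]
Applying this with $u=1/y$ for $y\in(0,\alpha)$ shows that the $\alpha$-digit produced from $y$ equals the regular digit $\lfloor 1/y\rfloor$ exactly when $T(y)<\alpha$. In that case $T_\alpha(y)=T(y)$, and the sign is $+1$ because $y>0$. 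When $T(y)\ge\alpha$, the $\alpha$-digit is $\lfloor 1/y\rfloor+1$.

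First I treat the integer part. Let $x$ be irrational with $x=[a_0;a_1,a_2,\cdots]$ and $y=x-a_0\in(0,1)$. The $\alpha$-algorithm chooses $b_0$ with $x-b_0\in[\alpha-1,\alpha)$. Since $x-a_0\in[0,1)$, we get $b_0=a_0$ if $y<\alpha$, and $b_0=a_0+1$ if $y\ge\alpha$. This is exactly the condition for $k=0$.

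Next comes the induction. I claim that if $T^j(y)<\alpha$ for all $0\le j\le k$, then
\[
T_\alpha^j(y)=T^j(y)\ \ (0\le j\le k+1),\qquad b_j=a_j,\ \varepsilon_j=1\ \ (1\le j\le k+1).
\]
The inductive step applies the observation to $u=1/T^k(y)$. This is legitimate because $x$ is irrational, so $T^k(y)\in(0,\alpha)$ is never $0$, and hence all $\varepsilon_j$ and $b_j$ are genuinely defined by the nonzero branch of $T_\alpha$. The same irrationality gives $T_\alpha^j(y)=T^j(y)>0$, which is what forces $\varepsilon_{j+1}=1$. Letting $k\to\infty$ proves the inclusion "$\supseteq$": if $T^k(y)<\alpha$ for all $k\ge 0$, then $x\in S(\alpha)$.

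For "$\subseteq$" I argue by contraposition. Suppose some $T^k(y)\ge\alpha$, and let $k$ be the least such index. If $k=0$, then $b_0=a_0+1\ne a_0$. If $k\ge1$, the induction applied up to $k-1$ gives $T_\alpha^{k-1}(y)=T^{k-1}(y)\in(0,\alpha)$. The observation with $u=1/T^{k-1}(y)$, whose fractional part is $T^k(y)\ge\alpha$, then yields $b_k=a_k+1\ne a_k$. In either case $x\notin S(\alpha)$.

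The only delicate points are bookkeeping ones. The boundary case $T^k(y)=\alpha$ must be sent to the "$+1$" side; it is, because the defining interval $[\alpha-1,\alpha)$ is half-open, which is why the condition in the proposition is the strict inequality $<\alpha$. One also has to rule out the terminating case $T_\alpha^{n}=0$, which irrationality handles. Beyond these, I expect no real obstacle.
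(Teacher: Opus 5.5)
Your argument is essentially the paper's: the same floor identity (the paper's \eqref{d}) drives a digit-by-digit induction for $A\subseteq S(\alpha)$. Your contrapositive for $S(\alpha)\subseteq A$ is the paper's direct argument (``$b_n=a_n$ forces $\{1/T^{n-1}(x-a_0)\}<\alpha$''), just applied at the first index where it fails.

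One indexing slip needs fixing: your inductive claim is off by one.

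\begin{itemize}
\item From $T^j(y)<\alpha$ for $0\le j\le k$ you get $b_0=a_0$, and $T_\alpha^j(y)=T^j(y)$ and $b_j=a_j$ only for $j\le k$. You also get $\varepsilon_j=1$ for $1\le j\le k+1$.
\item By your own observation applied to $u=1/T^k(y)$, the equality $b_{k+1}=a_{k+1}$ holds exactly when $T^{k+1}(y)<\alpha$. That is not in your hypothesis.
\item As stated, the claim is false. Take $k=0$ and $y<\alpha\le T(y)$; then $b_1=a_1+1$.
\item It would even contradict your $\subseteq$ step: applying the claim up to $k-1$ would give $b_k=a_k$, whereas you then derive $b_k=a_k+1$.
\end{itemize}

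With the corrected ranges nothing else changes. Letting $k\to\infty$ still yields all digits and signs, and the $\subseteq$ step only uses $T_\alpha^{k-1}(y)=T^{k-1}(y)$.
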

\begin{proof}
  Denote $A=\left\{x=[a_0;a_1,\cdots]\in \mathbb{R}\setminus \mathbb{Q}: T^k(x-a_0)<\alpha,\ \forall k\ge 0\right\}$.
  \par
  First, we prove $A\subset S(\alpha)$. For any $x\in A$, we will prove $x\in S(\alpha)$. Recall that $x=[b_0;\varepsilon_1b_1,\varepsilon_2b_2,\cdots]_{\alpha}$, where $b_0$ is the unique integer such that $x-b_0\in [\alpha-1,\alpha),\ \varepsilon_n=\text{sgn}\left(T_{\alpha}^{n-1}(x-b_0)\right)$ and
  \[b_n=\left\lfloor \left|\frac{1}{T^{n-1}_{\alpha}(x-b_0)}\right|+1-\alpha\right\rfloor.\]
  Now, we prove $a_{n}=b_{n},\ \varepsilon_{n}=1$ and $T^n_{\alpha}(x-b_0)=T^n(x-a_0)$ for all $n\ge 1$ by induction on $n$.
  \par
  First, we prove $a_{2}=b_{2},\ \varepsilon_{1}=1$ and $T_{\alpha}(x-b_0)=T(x-a_0)$. If $b_0\ge a_0+1$, then $x-b_0\le x-a_0-1<\alpha-1$. If $b_0\le a_0-1$, then $x-b_0\ge x-a_0+1>1$. Since $x-b_0\in [\alpha-1,\alpha)$, then $b_0=a_0$, which implies $\varepsilon_1=\text{sgn}(x-b_0)=\text{sgn}(x-a_0)=1$. Hence,
  \[T_{\alpha}(x-b_0)=\frac{1}{x-a_0}-\left\lfloor\frac{1}{x-a_0}\right\rfloor=T(x-a_0).\]
  Since $T(x-a_0)<\alpha$, we have 
  \[\left\{\frac{1}{x-a_0}\right\}+1-\alpha<1.\]
  Note that for any $\beta,\theta\in\mathbb{R}$, we have 
  \begin{align}\label{d}
    \left\lfloor \beta+\theta\right\rfloor=\left\lfloor \beta\right\rfloor\ \text{if and only if}\ \{\beta\}+\theta<1.
  \end{align}
  Then,
  \[b_1=\left\lfloor\left|\frac{1}{x-b_0}\right|+1-\alpha\right\rfloor=\left\lfloor\frac{1}{x-a_0}+1-\alpha\right\rfloor=\left\lfloor\frac{1}{x-a_0}\right\rfloor=a_1.\]
  Since $T^2(x-a_0)<\alpha$, then by same arguments above, we have 
  \[b_2=\left\lfloor\left|\frac{1}{T_{\alpha}(x-b_0)}\right|+1-\alpha\right\rfloor=\left\lfloor\frac{1}{T(x-a_0)}+1-\alpha\right\rfloor=\left\lfloor\frac{1}{T(x-a_0)}\right\rfloor=a_2.\]
  \par
  Suppose $a_{k+1}=b_{k+1},\ \varepsilon_k=1$ and $T^k_{\alpha}(x-b_0)=T^k(x-a_0)$ for all $1\le k\le n$. Now, we prove $a_{n+2}=b_{n+2},\ \varepsilon_{n+1}=1$ and $T^{n+1}_{\alpha}(x-b_0)=T^{n+1}(x-a_0)$. For any $1\le k\le n$, since $\varepsilon_k=1$, then 
  $T_{\alpha}^{k-1}(x-b_0)>0$. Hence, by definition of $T_{\alpha}$,
  \begin{align}\label{c}
    T_{\alpha}^{n+1}(x-b_0)=\frac{1}{T_{\alpha}^{n}(x-b_0)}-b_{n+1}=\frac{1}{T_{\alpha}^{n}(x-b_0)}-a_{n+1}=\frac{1}{T_{\alpha}^{n}(x-a_0)}-\left\lfloor\frac{1}{T^{n}(x-a_0)}\right\rfloor.
  \end{align}
  Therefore, by induction we have $T_{\alpha}^{n+1}(x-b_0)=T^{n+1}(x-a_0)$. Hence, we have $\varepsilon_{n+1}=\text{sgn}\left(T_{\alpha}^n(x-b_0)\right)=1$. Since $T^{n+2}(x-a_0)<\alpha$, by the same arguments in the proof of $b_1=a_1$, we have
  \[b_{n+2}=\left\lfloor\left|\frac{1}{T_{\alpha}^{n+1}(x-b_0)}\right|+1-\alpha\right\rfloor=\left\lfloor\frac{1}{T^{n+1}(x-a_0)}+1-\alpha\right\rfloor=a_{n+2}.\]
  Hence, we have $a_{n}=b_{n}$ and $\varepsilon_n=1$ for all $n\ge 1$, which implies $A\subset S(\alpha)$. 
  \par
  Now, we prove $S(\alpha)\subset A$. For any $x\in S(\alpha)$, we will prove $x\in A$. Since $a_n=b_n,\ \forall n\ge 0$, then $T_{\alpha}^n(x-b_0)=T^n(x-a_0),\ \forall n\ge 0$. For any $n\ge 1$, since $\varepsilon_{n}=1$, then 
  \[b_{n}=\left\lfloor\frac{1}{T_{\alpha}^{n-1}(x-b_0)}+1-\alpha\right\rfloor=\left\lfloor\frac{1}{T^{n-1}(x-a_0)}+1-\alpha\right\rfloor.\]
  Since $a_{n}=b_{n}$, using \eqref{d}, we have 
  \[\left\{\frac{1}{T^{n-1}(x-a_0)}\right\}<\alpha,\]
  which implies $T^{n}(x-a_0)<\alpha$ for all $n\ge 1$. Since $x-a_0=x-b_0$, we also have $x-a_0<\alpha$, which completes the proof.
\end{proof}
\begin{proof}[Proof of Theorem \ref{sejh}]
  We first prove that the set $S(\alpha)$ does not change on $(C_i(n))^{\circ}$ for any $n\ge 1$ and $i=1,2,3$. For any $\alpha\in (C_i(n))^{\circ}$, there exists $\delta>0$, such that $(\alpha-\delta,\alpha+\delta)\subset (C_i(n))^{\circ}$. By Proposition \ref{rcf},
  \[E(\alpha-\delta)\setminus \mathbb{Q}\subset S(\alpha)\cap [0,1]\subset E(\alpha)\setminus \mathbb{Q}.\]
  Since $(\alpha-\delta,\alpha+\delta)\subset (C_i(n))^{\circ}$, by Theorem \ref{jh}, we have $E(\alpha-\delta)=E(\alpha)$.
  Hence, we have $S(\alpha)\cap [0,1]=E(\alpha)\setminus \mathbb{Q}$ for all $\alpha\in (C_i(n))^{\circ}$.
  Since 
  \[S(\alpha)\cap [N,N+1]=\{x+N:x\in S(\alpha)\cap [0,1]\},\]
  we complete the proof.
\end{proof}
\begin{proof}[Proof of Theorem \ref{sews}]
  Since 
  \[S(\alpha)=\bigcup_{n\in\mathbb{Z}}\{x+n: x\in S(\alpha)\cap [0,1]\},\]
  then
  \[\dim_{H}S(\alpha)=\dim_{H}\left(S(\alpha)\cap [0,1]\right).\]
  Note that, by definition
  \[S(\alpha)\cap [0,1]\subset E(\alpha)\subset S(\alpha)\cap [0,1]\cup \mathbb{Q}\cup\left(\bigcup_{k=0}^{+\infty}\left\{x\in [0,1]: T^k(x)=\alpha\right\}\right).\]
  Since $\left\{x\in [0,1]: T^k(x)=\alpha\right\}$ is countable for any $k\ge 0$, then we complete the proof. 
\end{proof}

\par
\textbf{Acknowledgements}: Lingmin Liao was partially supported by the Fundamental Research Funds for the Central Universities of China (1301/413100147), and by Beijing Natural Science Foundation (JQ26002).

%
%\noindent {\bf Data Availability:}
%Data sharing is not applicable to this article as no datasets were generated or analyzed during the current study.
%
%\vspace*{5ex}
%
%\noindent {\bf Conflict of interest:}  On behalf of all authors, the corresponding author states that there is no conflict of interest.
%
%\vspace*{5ex}

 { }

%%%%%%%%%%%%%%%%%%%%%%%%%%%%%%%%%%%%%%%%%%%%%

\vspace*{10ex}

\noindent Cheng Liu: School of Mathematics and Statistics,
Wuhan University, %Universit\'e Paris-Est Créteil CNRS, LAMA, F-94010 Créteil, France
\vspace{-2mm}

\noindent\phantom{Cheng Liu: }Bayi Road 299, Wuchang District, Wuhan, China
% \&
%Universit\'e Gustave Eiffel, LAMA, F-77447 Marne-la-Vallée, France
%\vspace{0mm}

\noindent\phantom{Cheng Liu: }e-mail: 2023202010021@whu.edu.cn %lingmin.liao@u-pec.fr

%%%%%%%%%%%%%%%%%%%%%%%%%%%%%%%%%%%%%%%%%

\vspace{5mm}

\noindent Lingmin Liao: School of Mathematics and Statistics,
Wuhan University, %Universit\'e Paris-Est Créteil CNRS, LAMA, F-94010 Créteil, France
\vspace{-2mm}

\noindent\phantom{Lingmin Liao: }Bayi Road 299, Wuchang District, Wuhan, China
% \&
%Universit\'e Gustave Eiffel, LAMA, F-77447 Marne-la-Vallée, France
%\vspace{0mm}

\noindent\phantom{Lingmin Liao: }e-mail: lmliao@whu.edu.cn %lingmin.liao@u-pec.fr

\end{document}